\colorlet{shadecolor}{gray!10}
\newcommand{\E}[1]{{\mathbf E}\left[#1\right]}
\newcommand{\e}{{\mathbf E}}
\newcommand{\V}[1]{{\mathrm{Var}}\left(#1\right)}
\newcommand{\p}[1]{{\mathbf P}\left\{#1\right\}}
\newcommand\inprobHIGH{\,{\buildrel p \over \rightarrow}\,} 
\newcommand\inprob{{\inprobHIGH}}
\newcommand\inlawHIGH{\,{\buildrel d \over \rightarrow}\,} 
\newcommand\inlaw{{\inlawHIGH}}
\newcommand\inas{\,{\buildrel {a.s.} \over \rightarrow}\,} 
\newcommand{\eql}{\,{\buildrel \cL \over =}\,}
\newcommand{\eqd}{\,{\buildrel {\mathrm{def}} \over =}\,}
\newcommand{\argmin}{\operatornamewithlimits{argmin}}
\newcommand{\Bin}{\mathop{\mathrm{Bin}}}
\newcommand{\Poi}{\mathop{\mathrm{Poi}}}
\newcommand{\Unif}{\mathop{\mathrm{Unif}}}
\newcommand{\Exp}{\mathop{\mathrm{Exp}}}
\newcommand{\Gam}{\mathop{\mathrm{Gamma}}}
\newcommand{\iid}{i.i.d.\@\xspace}
\newcommand{\R}{{\mathbb R}}
\newcommand{\N}{{\mathbb N}}
\newcommand\cB{{\cal B}}
\newcommand\cF{{\cal F}}
\newcommand\cG{{\cal G}}
\newcommand\cK{{\cal K}}
\newcommand\cL{{\cal L}}
\newcommand\cN{{\cal N}}
\newcommand\cR{{\cal R}}
\newcommand\cS{{\cal S}}
\newcommand\cV{{\cal V}}
\newcommand\cX{{\cal X}}
\DeclarePairedDelimiter{\iverson}{\llbracket}{\rrbracket}
\DeclarePairedDelimiter{\ceil}{\lceil}{\rceil}
\newcommand\bigO[1]{{O\!\left( #1 \right)}}
\newcommand\smallo[1]{{o\!\left( #1 \right)}}
\newcommand\numberthis{\addtocounter{equation}{1}\tag{\theequation}}
\newcommand{\mySeq}[2]{(#1)_{#2}}
\newcommand{\HGFunc}{F}
\newcommand{\ee}{\mathrm{e}}
\newtheorem{lemma}{Lemma}
\newtheorem{theorem}{Theorem}
\newtheorem{proposition}{Proposition}
\theoremstyle{definition}
\newtheorem{remark}{Remark}
\newtheorem*{remark*}{Remark}
\newtheorem{example}{Example}
\newtheorem{myCond}{Condition}
\numberwithin{equation}{section}
\newcommand{\kcut}{\cK}
\newcommand{\kcutE}{\cK_{e}}
\newcommand{\cutNum}{\cX}
\newcommand{\cutNumN}{\cutNum_n}
\newcommand{\clock}{T}
\newcommand{\clockRJ}[2]{\clock_{#1,#2}}
\newcommand{\clockiv}{\clockRJ{i}{v}}
\newcommand{\gclock}{G}
\newcommand{\gclockRJ}[2]{\gclock_{#1,#2}}
\newcommand{\gclockrv}{\gclockRJ{r}{v}}
\newcommand{\gclockOnej}{\gclockRJ{1}{j+1}}
\newcommand{\gclockOnei}{\gclockRJ{1}{i+1}}
\newcommand{\pclock}{T^{*}}
\newcommand{\pclockj}{\pclock_{1,j}}
\newcommand{\record}{I}
\newcommand{\recordRJ}[2]{\record_{#1,#2}}
\newcommand{\recordrv}{\recordRJ{r}{v}}
\newcommand{\recordOnej}{\recordRJ{1}{j}}
\newcommand{\precord}{I^{*}}
\newcommand{\precordnj}{\precord_{j}}
\newcommand{\recNum}{\cX}
\newcommand{\recNumN}{\recNum_n}
\newcommand{\recNumNi}[1]{\recNum_{n,#1}}
\newcommand{\recNumNr}{\recNumNi{r}}
\newcommand{\recNumNk}{\recNumNi{k}}
\newcommand{\recNumNOne}{\recNumNi{1}}
\newcommand{\precNum}{\cX^{*}}
\newcommand{\precNumN}{\precNum_{n}}
\newcommand{\grecord}{R}
\newcommand{\grecordnJ}[1]{\grecord_{n, #1}}
\newcommand{\grecordnj}{\grecordnJ{j}}
\newcommand{\grecordnp}{\grecordnJ{p}}
\newcommand{\grecordnSeq}{\mySeq{\grecordnp}{p \ge 1}}
\newcommand{\grecPos}{P}
\newcommand{\grecPosnJ}[1]{\grecPos_{n, #1}}
\newcommand{\grecPosnj}{\grecPosnJ{j}}
\newcommand{\grecPosnp}{\grecPosnJ{p}}
\newcommand{\grecPosnSeq}{\mySeq{\grecPosnp}{p \ge 1}}
\newcommand{\PRseq}{\mySeq{\grecordnp, \grecPosnp}{n \ge 1, p \ge 1}}
\newcommand{\gB}{B}
\newcommand{\gBnr}{\gB_{n,p}}
\newcommand{\gBnj}{\gB_{n,j}}
\newcommand{\gBns}{\gB_{n,s}}
\newcommand{\pB}{B^{*}}
\newcommand{\pBnr}{\pB_{n,p}}
\newcommand{\pBnj}{\pB_{n,j}}
\newcommand{\pBns}{\pB_{n,s}}
\newcommand{\erm}{{\mathrm e}}
\newcommand{\nscale}{{n^{1-\frac{1}{k}}}}
\newcommand{\Lpnr}{L^{*}_{n,p}}
\newcommand{\Lpnj}{L^{*}_{n,j}}
\newcommand{\Lnr}{L_{n,p}}
\newcommand{\SnOne}{S_{n,1}}
\newcommand{\Snr}{S_{n,p}}
\newcommand{\Sp}{S^{*}}
\newcommand{\Spnr}{S^{*}_{n,p}}
\newcommand{\Spnj}{S^{*}_{n,j}}
\newcommand{\SpnOne}{S^{*}_{n,1}}
\newcommand{\Snj}{S_{n,j}}
\newcommand{\Graphn}{\mathbb{G}_{n}}
\newcommand{\Treen}{\mathbb{T}_{n}}
\newcommand{\Pathn}{\mathbb{P}_{n}}
\newcommand{\Kn}{\mathbb{K}_{n}}
\begin{document}

\pagestyle{plain}
\title{\rmfamily\normalfont\spacedallcaps{\(k\)-cut on paths and some trees}\thanks{This work is supported by
    the Knut and Alice Wallenberg Foundation, the Swedish Research Council, and the Ragnar
    Söderbergs foundation. Emails:
    \texttt{\{xingshi.cai, cecilia.holmgren, fiona.skerman\}@math.uu.se},
    \texttt{lucdevroye@gmail.com}.}
\thanks{We thank Henning Sulzbach and Svante Janson for helpful discussions.}
}
\author{
    Xing Shi Cai, Cecilia Holmgren, Fiona Skerman\\
    \footnotesize{Uppsala University, Uppsala, Sweden}
    \and
    Luc Devroye\\
    \footnotesize{McGill University, Montreal, Canada}
}

\maketitle

\begin{abstract}
We define the (random) \(k\)-cut number of a rooted graph to model the difficulty of the
destruction of a resilient network.  The process is as the cut model of Meir and
Moon~\cite{meir70} except now a node must be cut \(k\) times before it is destroyed. The first
order terms of the expectation and variance of \(\recNumN\), the \(k\)-cut number of a path of
length \(n\), are proved.  We also show that \(\recNumN\), after rescaling, converges in
distribution to a limit \(\cB_{k}\), which has a complicated representation.  The paper then
briefly discusses the \(k\)-cut number of some trees and general graphs. We conclude by some analytic results which
may be of interest.
\end{abstract}
\section{Introduction and main results}
\label{sec:introduction}

\subsection{The \texorpdfstring{$k$}{k}-cut number of a graph}

Consider \(\Graphn\), a connected graph consisting of \(n\) nodes with exactly one node labeled as the
\emph{root}, which we call a \emph{rooted} graph.
Let \(k\) be a positive integer. We remove nodes from the graph as follows:
\begin{enumerate}
    \item Choose a node uniformly at random from the component that contains
        the root. Cut the selected node once.
    \item If this node has been cut \(k\) times, remove the node together with edges attached to it from the graph. 
    \item If the root has been removed, then stop.  Otherwise, go to step 1.
\end{enumerate}
We call the (random) total number of cuts needed to end this procedure the \(k\)-cut number and
denote it by \(\kcut(\Graphn)\). (Note that in traditional cutting models, nodes are removed as soon as
they are cut once, i.e., \(k=1\). But in our model, a node is only removed after being cut \(k\) times.)

One can also define an edge version of this process. Instead of cutting nodes, each time we choose
an edge uniformly at random from the component that contains the root and cut it once. If the edge
has been cut \(k\)-times then we remove it. The process stops when the root is isolated.  We let
\(\kcutE(\Graphn)\) denote the number of cuts needed for the process to end. 

Our model can also be applied to botnets, i.e., malicious computer networks consisting of
compromised machines which are often used in spamming or attacks. The nodes in \(\Graphn\) represent the
computers in a botnet, and the root represents the bot-master. The effectiveness of a botnet can be
measured using the size of the component containing the root, which indicates the resources
available to the bot-master \cite{4413000}. To take down a botnet means to reduce the size of this
root component as much as possible. If we assume that we target infected computers uniformly at
random and it takes at least \(k\) attempts to fix a computer, then the \(k\)-cut number measures
how difficult it is to completely isolate the bot-master.

The case \(k=1\) and \(\Graphn\) being a rooted tree has aroused great interests among
mathematicians in the past few decades.  The edge version of one-cut was first introduced by
\citet{meir70} for the uniform random Cayley tree.  \citeauthor{janson04} \cite{janson04, janson06}
noticed the equivalence between one-cuts and records in trees and studied them in binary trees and
conditional Galton-Watson trees.  Later \citet{ab14} gave a simpler proof for the limit distribution
of one-cuts in conditional Galton-Watson trees. For one-cuts in random recursive trees, see
\cite{meir74, iksanov07, drmota09}.  For binary search trees and split trees, see \cite{holmgren10,
holmgren11}.

\subsection{The \texorpdfstring{$k$}{k}-cut number of a tree}

One of the most interesting cases is when \(\Graphn = \Treen\), where \(\Treen\) is a rooted tree with
\(n\) nodes.

There is an equivalent way to define \(\kcut(\Treen)\). 
Imagine that each node is given an alarm clock. At time zero, the alarm clock of node \(v\) is set
to ring at time \(\clock_{1,v}\), where \( \mySeq{\clockiv}{i \ge 1, v \in \Treen}\) are \iid{}
(independent and identically distributed) \(\Exp(1)\) random variables. After the alarm clock of
node \(v\) rings the \(i\)-th time, we set it to ring again at time \(\clock_{i+1,v}\).
Due to the memoryless property of exponential random variables (see \cite[pp.~134]{durrett10}), at
any moment, which alarm clock rings next is always uniformly distributed. Thus, if
we cut a node that is still in the tree when its alarm clock rings, and remove the node with its
descendants if it has already been cut \(k\)-times, then we get exactly the \(k\)-cut model. 
(The random variables \(\mySeq{\clockRJ{i}{v}}{i \ge 1}\) can be seen as the holding times in a
Poisson process \(N(t)_{v}\) of parameter \(1\), where \(N(t)_{v}\) is the number of cuts in
\(v\) during the time \([0,t]\) and has a Poisson distribution with parameter \(t\).)

How can we tell if a node is still in the tree? When node \(v\)'s alarm clock rings for the \(r\)-th
time for some \(r \le k\), and no node above \(v\) has already rung \(k\) times, we say \(v\) has
become an \(r\)-\emph{record}. And when a node becomes an \(r\)-record, it must still be in the
tree. Thus, summing the number of \(r\)-records over \(r \in \{1,\dots,k\}\), we again get the
\(k\)-cut number \(\kcut(\Treen)\). One node can be a \(1\)-record, a \(2\)-record, etc., at the same
time, so it can be counted multiple times. Note that if a node is an \(r\)-record, then it must also
be a \(i\)-record for \(i \in \{1,\dots,r-1\}\).

To be more precise, we define \(\kcut(\Treen)\) as a function of \(\mySeq{\clockiv}{i \ge 1, v \ge 1}\).
Let 
\begin{align}
    \gclockrv \eqd \sum_{i=1}^{r} \clock_{i,v},
    \label{eq:gclockj}
\end{align}
i.e., \(\gclockrv\) is the moment when the alarm clock of node \(v\) rings for the \(r\)-th time.
Then \(\gclockrv\) has a gamma distribution with parameters \( (r,1)\) (see
\cite[Theorem~2.1.12]{durrett10}), which we denote by \(\Gam(r)\).
Let
\begin{align}
    \recordrv \eqd \iverson{\gclockRJ{r}{v} < \min\{\gclock_{k,u} : u \in \Treen, \text{\(u\) is an ancestor
        of \(v\)} \}},
    \label{eq:rec:def}
\end{align}
where 
\(\iverson{\cdot}\) denotes the Iverson bracket, i.e.,
\(\iverson{S}=1\) if the statement \(S\) is true and \(\iverson{S}=0\) otherwise.
In other words, \(\recordrv\) is the indicator random variable for node \(v\) being an \(r\)-record.
Let
\[
    \kcut_{r}(\Treen) \eqd \sum_{v \in \Treen} \recordrv,
    \qquad
    \kcut(\Treen) \eqd \sum_{r=1}^{k} \kcut_{r}(\Treen).
\]
Then \(\kcut_{r}(\Treen)\) is the number of \(r\)-records and \(\kcut(\Treen)\) is the total number of
records.

\subsection{The \texorpdfstring{$k$}{k}-cut number of a path}

Let \(\Pathn\) be a one-ary tree (a path) consisting of \(n\) nodes labeled \(1,\dots,n\) from the
root to the leaf. To simplify notations, from now on we use \(I_{r,i}, G_{r,i}\), and \(T_{r,i}\)
to represent \(I_{r,v}, G_{r,v}\) and \(T_{r,v}\) respectively for a node \(v\) at depth \(i\).

Let \(\cutNumN\eqd \kcut(\Pathn)\) and \(\recNumNr = \kcut_{r}(\Pathn)\).  In this
paper, we mainly consider \(\cutNumN\) and we let \(k \ge 2\) be a fixed integer.  

The first motivation of this choice is that, as shown in \autoref{sec:tree}, \(\Pathn\) is the fastest
to cut among all graphs. (We make this statement precise in Lemma~\ref{lem:universal}.) Thus
\(\cutNumN\) provides a universal stochastic lower bound for \(\kcut(\Graphn)\). Moreover, our results
on \(\cutNumN\) can immediately be extended to some trees of simple structures: see
Section~\ref{sec:tree}. Finally, as shown below, \(\cutNumN\) generalizes the well-known record
number in permutations and has very different behavior when \(k=1\), the usual cut-model, and \(k
    \ge 2\),  our extended model.

The name record comes from the classic definition of \emph{records} in random permutations. Let
\(\sigma_{1},\dots,\sigma_{n}\) be a uniform random permutation of \(\{1,\dots,n\}\). If
\(\sigma_{i} < \min_{1 \le j < i} \sigma_{j}\), then \(i\) is called a \emph{(strictly lower) record}.
Let \(\cR_{n}\) denote the number of records in \(\sigma_{1},\dots,\sigma_{n}\).  Let
\(W_{1},\dots,W_{n}\) be \iid{} random variables with a common continuous distribution.  Since the
relative order of \(W_{1},\dots,W_{n}\) also gives a uniform random permutation, we can equivalently
define \(\sigma_{i}\) as the rank of \(W_{i}\). As gamma distributions are continuous, we can in
fact let \(W_{i} = \gclockRJ{k}{i}\).  Thus, being a record in a uniform permutation is equivalent to
being a \(k\)-record and \(\cR_{n} \eql \recNumNi{k}\). Moreover, when \(k=1\), \(\cR_{n} \eql
\recNumN\).

Starting from \citeauthor{chandler52}'s article \cite{chandler52} in \citeyear{chandler52}, the
theory of records has been widely studied due to its applications in statistics, computer science,
and physics. For more recent surveys on this topic, see \cite{ahsanullah2004record}.

A well-known result of \(\cR_n\) (and thus also \(\recNumNi{k}\)) \cite{renyi62} is that \(
\mySeq{\recordRJ{k}{j}}{1 \le j \le n}\) are independent. It follows from 
the Lindeberg–Lévy–Feller Theorem that
\begin{align}
    \frac{\E{\cR_{n}}}{\log n}
    \to
    1,
    \qquad
    \frac{\cR_{n}}{\log n}
    \inas
    1
    ,
    \qquad
    \cL
    \left( 
    \frac{\cR_{n} - \log n}{\sqrt{\log n}}
    \right)
    \inlaw
    \cN(0,1)
    ,
    \label{eq:clt:record}
\end{align}
where \(\cN(0,1)\) denotes the standard normal distribution.

In the following, \autoref{thm:rec:mean} gives the expectation of \(\recNumNr\) which implies that
the number of one-records dominates the number of other records. Subsequently
\autoref{thm:rec:var} and \autoref{thm:rec:high} estimate the variance and higher moments of
\(\recNumNOne\).
\begin{theorem}
    \label{thm:rec:mean}
    For all fixed \(k \in \N\),
    \begin{align}
        \E{\recNumNr}  
        \sim 
        \left\{
        \begin{array}{*2{>{\displaystyle}l}}
        \eta_{k, r} n^{1-\frac{r}{k}}
        \qquad
        \qquad
        & 
        (1 \le r < k),
        \\
        \log n
        &
        (r=k)
        ,
        \end{array}
        \right.
        \label{eq:rec:mean:k}
    \end{align}
    where the constants \(\eta_{k, r}\) are defined by
    \begin{equation}
        \eta_{k, r} 
        \eqd 
        \frac{(k!)^{\frac{r}{k}}}{k-r} \frac{\Gamma\left( \frac{r}{k} \right)}{\Gamma(r)}
        ,
        \label{eq:eta}
    \end{equation}
    where \(\Gamma(z)\) denotes the gamma function.
    Therefore \(\E{\recNumN} \sim \E{\recNumNOne}\).
    Also, for \(k=2\),
    \[
        \E{\recNumN}  
        \sim
        \E{\recNumNOne}  
        \sim
        \sqrt{2 \pi n}
        .
    \]
\end{theorem}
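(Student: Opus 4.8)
The plan is to compute $\E{\recNumNr}$ directly by summing the probabilities $\p{\recordrj = 1}$ over $j$, since $\recNumNr = \sum_{j=1}^n \recordrj$. By definition, $\recordrj = 1$ exactly when $\gclockrj < \min\{\gclockkj[i] : 1 \le i < j\}$, where $\gclockrj \sim \Gam(r)$ and the $\gclockkj[i] \sim \Gam(k)$ are all independent (here I write $\gclockkj[i]$ for the $k$-th ring time of the node at depth $i$, the ancestors of $j$). So the first step is to write
\begin{align}
\p{\recordrj = 1}
= \p{\gclockrj < \min_{1 \le i < j} \gclockkj[i]}
= \E{\left(\p{\gclockkj[1] > x}\big|_{x = \gclockrj}\right)^{j-1}}
= \int_0^\infty f_{\Gam(r)}(x)\, \overline{F}_{\Gam(k)}(x)^{\,j-1}\, dx,
\end{align}
where $\overline{F}_{\Gam(k)}$ is the survival function of $\Gam(k)$. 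Then $\E{\recNumNr} = \sum_{j=1}^n \p{\recordrj = 1} = \int_0^\infty f_{\Gam(r)}(x) \sum_{j=1}^n \overline{F}_{\Gam(k)}(x)^{j-1}\, dx = \int_0^\infty f_{\Gam(r)}(x) \, \frac{1 - \overline{F}_{\Gam(k)}(x)^n}{1 - \overline{F}_{\Gam(k)}(x)}\, dx$.

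Next I would extract the asymptotics as $n \to \infty$. The mass of the integrand concentrates near $x = 0$, where $\overline{F}_{\Gam(k)}(x) \to 1$; the factor $1 - \overline{F}_{\Gam(k)}(x)^n$ forces $x$ down to the scale where $n(1 - \overline{F}_{\Gam(k)}(x)) = \thetaf{1}$. Since $F_{\Gam(k)}(x) = \p{\gclockkj[1] \le x} \sim x^k/k!$ as $x \to 0$ (the leading term of the lower incomplete gamma function), we have $1 - \overline{F}_{\Gam(k)}(x) \sim x^k/k!$, so the relevant scale is $x \asymp (k!/n)^{1/k}$, i.e.\ $x = n^{-1/k}(k!)^{1/k} u$ for $u$ of order $1$. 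Likewise $f_{\Gam(r)}(x) \sim x^{r-1}/\Gamma(r)$ near $0$. For $1 \le r < k$, substituting $x = (k!/n)^{1/k} u$ turns the integral into
\begin{align}
\E{\recNumNr} \sim \frac{(k!)^{r/k}}{\Gamma(r)} \, n^{1 - \frac{r}{k}} \int_0^\infty u^{r-1}\, \frac{1 - e^{-u^k}}{u^k}\, du,
\end{align}
by dominated convergence (using $\overline{F}_{\Gam(k)}(x)^n \to e^{-u^k}$ and $1 - \overline{F}_{\Gam(k)}(x) \sim x^k/k!$). The remaining integral $\int_0^\infty u^{r - 1 - k}(1 - e^{-u^k})\, du$ is evaluated by the substitution $t = u^k$ and integration by parts (or a Gamma-function identity), giving $\frac{1}{k-r}\Gamma\!\left(\frac{r}{k}\right)$; this reproduces $\eta_{k,r}$. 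For $r = k$, the power $n^{1 - r/k} = n^0 = 1$ is replaced by a logarithmic divergence: $\int_0^\infty f_{\Gam(k)}(x)\, \frac{1 - \overline{F}_{\Gam(k)}(x)^n}{1 - \overline{F}_{\Gam(k)}(x)}\, dx \sim \int_{\asymp n^{-1/k}}^{\asymp 1} \frac{dx}{x} \sim \frac{1}{k}\log n \cdot k = \log n$ after tracking constants, which matches the classical record count $\cR_n \eql \recNumNk$ from \eqref{eq:clt:record}. Summing over $r$, the $r=1$ term dominates (largest exponent $1 - 1/k$), so $\E{\recNumN} \sim \E{\recNumNOne}$. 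For $k = 2$: $\eta_{2,1} = \frac{(2!)^{1/2}}{1}\cdot \frac{\Gamma(1/2)}{\Gamma(1)} = \sqrt{2}\cdot\sqrt{\pi} = \sqrt{2\pi}$, and the exponent is $1 - 1/2 = 1/2$, giving $\E{\recNumN} \sim \sqrt{2\pi n}$.

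The main obstacle I expect is controlling the tail of the integral away from $x = 0$ uniformly in $n$ to justify dominated convergence — i.e.\ checking that the contribution from $x$ bounded away from $0$ is $\smallo{n^{1-r/k}}$ (it is in fact $\bigO{1}$, since there $1 - \overline{F}_{\Gam(k)}(x)$ is bounded below) — and, relatedly, getting the $r = k$ logarithmic case right, where one must split the integral at two scales ($x \asymp n^{-1/k}$ and $x \asymp 1$) and show the middle régime contributes $\sim \log n$ while the two ends contribute $\bigO{1}$. A secondary technical point is justifying the near-origin expansions $F_{\Gam(k)}(x) \sim x^k/k!$ and $f_{\Gam(r)}(x)\sim x^{r-1}/\Gamma(r)$ with explicit error bounds good enough to pass through the rescaled integral; this is routine from the series for the incomplete gamma function but must be done carefully to control the error term in the "$\sim$''.
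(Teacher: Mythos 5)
Your proposal is correct and starts from the same identity as the paper --- conditioning on the value of \(G_{r,j}\) to write \(\p{\recordrj = 1}=\int_0^\infty f_{\Gam(r)}(x)\,\p{G_{k,1}>x}^{\,j-1}\,\mathrm{d}x\) --- but it extracts the asymptotics by a different aggregation. The paper first proves a per-node estimate with a uniform error term (\autoref{lem:mean:i}, obtained from the Laplace-type integral estimate of \autoref{lem:i:int} with a cutoff at \(x_0=m^{-\alpha}\)) and then sums over depths using \(\sum_{1\le i\le n} i^{-r/k}=\tfrac{1}{1-r/k}\,n^{1-r/k}+O(1)\); you instead sum the geometric series inside the integral, rescale \(x=(k!/n)^{1/k}u\), and pass to the limit by dominated convergence, reducing the constant to \(\int_0^\infty u^{r-1-k}(1-e^{-u^k})\,\mathrm{d}u=\Gamma(r/k)/(k-r)\), which indeed reproduces \(\eta_{k,r}\) (and \(\sqrt{2\pi n}\) for \(k=2\), \(r=1\)); the dominance of \(r=1\) and the \(r=k\) case via \(\cR_n\eql\recNumNk\) are handled as in the paper. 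The technical points you flag --- the \(O(1)\) contribution from \(x\) bounded away from the origin, a dominating function for the rescaled integrand, and the two-scale analysis if you insist on doing \(r=k\) by hand rather than citing the classical record result --- are exactly the right ones and are routine to fill in. What your shortcut gives up is the uniform-in-\(i\) per-node estimate of \autoref{lem:mean:i} with its explicit \(O(i^{-1/(2k)})\) error rate: the theorem itself does not need it, but the paper reuses that lemma in the variance and higher-moment computations and for the tree examples in Section 4, so the paper's organization pays off later, whereas your argument is leaner if the expectation on the path is all one wants.
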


\begin{theorem}
    \label{thm:rec:var}
    For all fixed \(k \in \{2,3,\dots\}\),
    \begin{align}
        \E{\recNumNOne(\recNumNOne - 1)}
        \sim
        \E{\left(\recNumNOne\right)^{2}}
        \sim
        \gamma_{k}
        n^{2-\frac{2}{k}}
        ,
        \label{eq:rec:factorial}
    \end{align}
    where
    \begin{align}
        \gamma_{k} =
        \frac{\Gamma\left( \frac{2}{k} \right) (k!)^{\frac{2}{k}}}{k-1}
        +
        2 \lambda_{k}
        ,
        \label{eq:gamma}  
    \end{align}
    and
    \begin{align}
        \label{eq:xi:k:2}
        \lambda_{k} =
        \left\{
        \begin{array}{*2{>{\displaystyle}l}}
        \frac{\pi  \cot \left(\frac{\pi }{k}\right) \Gamma \left(\frac{2}{k}\right)
            (k!)^{\frac{2}{k}}}{2 \left({k}-2\right) \left({k}-1\right)}
        & 
        k>2,
        \\
        \frac{\pi^2}{4}
        &
        k=2
        .
        \end{array}
        \right.
    \end{align}
    Therefore
    \begin{align}
        \V{\recNumNOne}
        \sim
        \left(
        \gamma_{k}
        -
        \eta_{k,1}^{2}
        \right)
        n^{2-\frac{2}{k}}
        .
        \label{eq:rec:var:k}
    \end{align}
    In particular, when \(k = 2\)
    \begin{align}
        \V{\recNumNOne}
        \sim
        \left(
        \frac{\pi^2}{2} + 2
        -
        2 \pi
        \right)
        n
        .
        \label{eq:rec:var:k2}
    \end{align}
\end{theorem}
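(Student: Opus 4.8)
\medskip
\noindent\textbf{Proof plan.}
The plan is to compute the second factorial moment of $\recNumNOne$ from the clock representation and then deduce the variance. Since $\recNumNOne=\sum_{i=1}^{n}\recordRJ{1}{i}$ and, by \eqref{eq:rec:def}, the depth-$i$ node is a $1$-record exactly when $\gclockRJ{1}{i}<M_{i}$ with $M_{i}\eqd\min_{1\le l<i}\gclockRJ{k}{l}$,
\begin{equation}
    \E{\recNumNOne(\recNumNOne-1)}=2\sum_{1\le i<i'\le n}\p{\gclockRJ{1}{i}<M_{i},\ \gclockRJ{1}{i'}<M_{i'}}.
\end{equation}
The two events overlap only through the block $\{\gclockRJ{k}{l}:l<i\}$, so $M_{i}\ge M_{i'}$ are minima from a common pool of i.i.d.\ $\Gam(k)$ variables, and additionally through the coupling $\gclockRJ{k}{i}=\gclockRJ{1}{i}+B$, $B\sim\Gam(k-1)$, which inserts $\gclockRJ{1}{i}$ into $M_{i'}$. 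First I would show this last coupling is negligible at leading order: replacing $\gclockRJ{k}{i}$ by an independent copy of $\Gam(k)$ changes the pair-probability by at most $\bigO{(i')^{-1-\frac1k}}$ — using the elementary bound $\p{\Gam(k)<s}\le s^{k}/k!$ and the fact that, when position $i$'s clock realises the minimum defining $M_{i'}$, that minimum is itself $\bigO{(i')^{-\frac1k}}$ — and this sums to $\bigO{n^{1-\frac1k}}=\smallo{n^{2-\frac2k}}$. After this reduction, and since $\gclockRJ{1}{i},\gclockRJ{1}{i'}$ are independent $\Exp(1)$'s independent of all Gamma clocks,
\begin{equation}
    \E{\recNumNOne(\recNumNOne-1)}=2\sum_{1\le i<i'\le n}\E{\bigl(1-\ee^{-M_{i}}\bigr)\bigl(1-\ee^{-M_{i'}}\bigr)}+\smallo{n^{2-\frac2k}},
\end{equation}
where $M_{i},M_{i'}$ are now minima of $i-1$ and $i'-1$ i.i.d.\ $\Gam(k)$ variables from a nested pool.

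Next I would extract the asymptotics of the double sum via the scaling limit: if $M^{(m)}$ is the minimum of $m$ i.i.d.\ $\Gam(k)$ variables, then $m^{1/k}M^{(m)}$ converges, with all moments, to the Weibull$(k)$ law $\Lambda$ with $\p{\Lambda>t}=\ee^{-t^{k}/k!}$ (this uses $\p{\Gam(k)>t}=1-t^{k}/k!+\smallo{t^{k}}$). Hence for $i=\lfloor\alpha n\rfloor$, $i'=\lfloor\beta n\rfloor$ with $0<\alpha<\beta<1$, writing $\Lambda_{1},\Lambda_{2}$ for the independent limits of the scaled minima over $\{l<i\}$ and $\{i<l<i'\}$, one has $M_{i}\approx(\alpha n)^{-\frac1k}\Lambda_{1}$ and $M_{i'}\approx\min\bigl((\alpha n)^{-\frac1k}\Lambda_{1},((\beta-\alpha)n)^{-\frac1k}\Lambda_{2}\bigr)$, and with $1-\ee^{-x}=x+\bigO{x^{2}}$,
\begin{equation}
    \E{\bigl(1-\ee^{-M_{i}}\bigr)\bigl(1-\ee^{-M_{i'}}\bigr)}\sim n^{-\frac2k}\,\Psi(\alpha,\beta),\qquad \Psi(\alpha,\beta)\eqd\alpha^{-\frac1k}\,\E{\Lambda_{1}\min\bigl(\alpha^{-\frac1k}\Lambda_{1},(\beta-\alpha)^{-\frac1k}\Lambda_{2}\bigr)}.
\end{equation}
A Riemann-sum argument then yields $\E{\recNumNOne(\recNumNOne-1)}\sim\gamma_{k}n^{2-\frac2k}$ with $\gamma_{k}=2\int_{0}^{1}\!\!\int_{0}^{\beta}\Psi(\alpha,\beta)\,d\alpha\,d\beta$; splitting $\min(\cdot,\cdot)$ according to whether the shared block or the fresh block attains it separates this integral into exactly the two summands of \eqref{eq:gamma}, the shared-block part contributing $\tfrac12\,\Gamma(2/k)(k!)^{2/k}/(k-1)$ and the fresh-block part $\lambda_{k}$. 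Here one must check that the regions $\alpha\to0$, $\beta\to1$, $\beta-\alpha\to0$, and the $\bigO{x^{2}}$ and coupling errors, all contribute $\smallo{n^{2-\frac2k}}$; for $k=2$ this requires avoiding the crude bound $M_{i'}\le M_{i}$ and instead using exchangeability to control which block holds the minimum.

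I expect the evaluation of this double integral to be the main obstacle. Passing to the explicit representation $\Lambda=(k!)^{1/k}E^{1/k}$ with $E\sim\Exp(1)$, the inner expectation $\E{\Lambda_{1}\min(c_{1}\Lambda_{1},c_{2}\Lambda_{2})}$ becomes a finite combination of one-dimensional Gamma integrals and an incomplete Beta integral $\int(v(1-v))^{1/k}\,dv$ (obtained by the change of variables $E_{1}=UV$, $E_{2}=U(1-V)$ with $U\sim\Gam(2)$, $V\sim\Unif(0,1)$), and the remaining integral over $\{0<\alpha<\beta<1\}$ reduces, after the substitution $\alpha=\beta w$, to Beta integrals of the type $\int_{0}^{1}w^{s-1}(1-w)^{-s}\,dw=\pi/\sin(\pi s)$; differentiating this (equivalently, using $\Gamma(s)\Gamma(1-s)=\pi/\sin(\pi s)$) produces the factor $\pi\cot(\pi/k)$ in \eqref{eq:xi:k:2}, with $\lambda_{2}=\pi^{2}/4$ obtained directly from $8\int_{0}^{1}(\arcsin\sqrt v)\sqrt{v(1-v)}\,dv=\pi^{2}/4$ (or as the finite limit of the $k>2$ expression as $k\downarrow2$, where the $\cot$ factor is indeterminate). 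Finally, $\E{(\recNumNOne)^{2}}=\E{\recNumNOne(\recNumNOne-1)}+\E{\recNumNOne}$ together with $\E{\recNumNOne}=\bigO{n^{1-\frac1k}}=\smallo{n^{2-\frac2k}}$ (by \autoref{thm:rec:mean}) gives \eqref{eq:rec:factorial}; then $\V{\recNumNOne}=\E{(\recNumNOne)^{2}}-\E{\recNumNOne}^{2}\sim(\gamma_{k}-\eta_{k,1}^{2})n^{2-\frac2k}$ by \autoref{thm:rec:mean}, which is \eqref{eq:rec:var:k}, and for $k=2$ one substitutes $\gamma_{2}=2+\pi^{2}/2$ and $\eta_{2,1}=\sqrt2\,\Gamma(1/2)=\sqrt{2\pi}$ to obtain \eqref{eq:rec:var:k2}.
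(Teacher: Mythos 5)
Your plan is correct and lands on the right constants, but it reorganizes the computation relative to the paper, so a comparison is worthwhile. The paper conditions on the two exponential values $x=\gclockRJ{1}{i+1}$ and $y=\gclockRJ{1}{j+1}$, keeps the dependence of node $i+1$'s $k$-th ring on $x$ exactly through the factor $\p{G_{k-1,1}+x>y}$ (evaluated by the Poisson--gamma connection in \autoref{lem:A2}), splits the sum according to whether $x<y$ or $x>y$, approximates $\p{\Gam(k)>t}^{m}$ by $\exp(-mt^{k}/k!)$, and evaluates the surviving integral $\xi_{k}(a,b)$ in closed form via hypergeometric identities (\autoref{lem:int:double:exp}, \autoref{lem:int:hyper}). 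You instead decouple the shared clock at the outset, rewrite the pair probability as $\E{(1-\erm^{-M_{i}})(1-\erm^{-M_{i'}})}$, pass to the Weibull limit of rescaled minima plus a Riemann sum, and split according to which block attains $M_{i'}$. That split is not the paper's split pointwise, yet after integration over the simplex it produces the same two numbers: a direct computation with the Weibull density confirms your claim that the shared-block integral equals $\tfrac12\Gamma(2/k)(k!)^{2/k}/(k-1)$, so the fresh-block integral must equal $\lambda_{k}$, and your Beta-integral/reflection-formula route to the $\pi\cot(\pi/k)$ factor in \eqref{eq:xi:k:2} is a reasonable substitute for the paper's hypergeometric (Mathematica-assisted) evaluation; your check $\lambda_{2}=\pi^{2}/4$ via the arcsine integral is numerically correct. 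The final passage from the factorial moment to \eqref{eq:rec:factorial}, \eqref{eq:rec:var:k} and \eqref{eq:rec:var:k2} matches the paper.

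One step as stated does not hold and needs repair: the decoupling estimate. The clock you replace is $\gclockRJ{k}{i}=\gclockRJ{1}{i}+\Gam(k-1)$, so on the event where the replacement can change anything the relevant small-value tail is that of $\Gam(k-1)$, of order $s^{k-1}/(k-1)!$, not the bound $\p{\Gam(k)<s}\le s^{k}/k!$ you invoke; the resulting naive per-pair error is $\bigO{1/i'}$, which sums to order $n$ and is not $\smallo{n^{2-2/k}}$ when $k=2$. Negligibility is still true, but to see it you must retain the additional constraint $\gclockRJ{1}{i}<M_{i}$ (node $i$ is itself a one-record), which supplies an extra factor of order $i^{-1/k}$ and brings the total error back to $\bigO{n^{1-1/k}}$ as you claim. (The paper sidesteps this entirely by computing the coupled factor exactly; its $A_{2,i,j}$ asymptotics show the dependence only enters at lower order.) The remaining items you flag --- uniform integrability behind replacing expectations of rescaled minima by Weibull moments, and the boundary regions $\alpha\to0$, $\beta-\alpha\to0$ (delicate only for $k=2$, where the crude domination by $\alpha^{-2/k}$ is not integrable) --- are genuine but routine given convergence of all moments of $m^{1/k}M^{(m)}$.
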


\begin{theorem}\label{thm:rec:high}
    For all fixed \(k \in \{2,3,\dots\}\) and \(\ell \in \N\)
    \begin{equation}\label{eq:rec:high}
        \limsup_{n \to \infty} \E{\left(\frac{\recNumNOne}{n^{1-\frac{1}{k}}} \right)^{\ell}} 
        \le
        \rho_{k,\ell}
        \eqd
        {\ell! }
        {\Gamma \left(\ell+1-\frac{\ell}{k}\right)^{-1}}
        \left(\frac{\pi}{k}(k!)^{1/k} \sin\left(\frac{\pi }{k}\right)^{-1} \right)^\ell
        .
    \end{equation}
    The upper bound is tight for \(\ell=1\) since \(\rho_{k,1}=\eta_{k,1}\).
\end{theorem}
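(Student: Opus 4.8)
The plan is to bound the factorial moments of \(\recNumNOne\) by relaxing each \(1\)-record event just enough that the positions in an \(\ell\)-tuple of \(1\)-records decouple: the \(\ell\)-fold sum over tuples of positions then factors into a product of one-dimensional sums which, after rescaling, is a Riemann sum for a Dirichlet integral. Concretely, passing to falling factorials via Stirling numbers of the second kind,
\[
  \E{(\recNumNOne)^{\ell}}
  = \sum_{p=1}^{\ell}\stirling{\ell}{p}\,p!
    \sum_{1\le j_1<\cdots<j_p\le n}
    \p{\recordRJ{1}{j_1}=\cdots=\recordRJ{1}{j_p}=1},
\]
and I would show each inner sum is \(O\bigl(n^{p(1-1/k)}\bigr)\); since \(p(1-1/k)<\ell(1-1/k)\) for \(p<\ell\), only the \(p=\ell\) term matters after dividing by \(n^{\ell(1-1/k)}\) and taking \(\limsup\), so only that sum needs a sharp constant.

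The heart of the argument is the decoupling bound. Fix \(1\le j_1<\cdots<j_\ell\le n\) and set \(j_0=0\). On the path the ancestors of node \(j_m\) are \(1,\dots,j_m-1\), so \(\recordRJ{1}{j_m}=1\) forces \(\clockRJ{1}{j_m}<\gclockRJ{k}{u}\) for every \(u<j_m\); keeping only those \(u\) with \(j_{m-1}<u<j_m\),
\[
  \p{\recordRJ{1}{j_1}=\cdots=\recordRJ{1}{j_\ell}=1}
  \;\le\; \prod_{m=1}^{\ell}\varphi\bigl(j_m-j_{m-1}-1\bigr),
\]
where \(\varphi(q)\) is the probability that an \(\Exp(1)\) variable is smaller than the minimum of \(q\) independent \(\Gam(k)\) variables (all mutually independent) and \(\varphi(0)=1\). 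The right-hand side is a product because the singletons \(\{j_m\}\) and the open intervals \((j_{m-1},j_m)\), \(m=1,\dots,\ell\), are pairwise disjoint, so the exponential clocks entering distinct factors are independent. When \(\ell=1\) the interval \((0,j_1)\) already contains all ancestors of \(j_1\), so nothing is discarded — this is why the bound turns out to be tight for \(\ell=1\).

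It remains to estimate \(\sum_{j_1<\cdots<j_\ell}\prod_m\varphi(j_m-j_{m-1}-1)\). Conditioning on the minimum, \(\varphi(q)=\int_0^\infty\p{\Gam(k)>s}^{\,q}\,\ee^{-s}\,\mathrm{d}s\), and from \(\p{\Gam(k)\le s}\sim s^{k}/k!\) as \(s\to0\) a standard Laplace-type estimate gives \(\varphi(q)\sim c\,q^{-1/k}\) with \(c=(k!)^{1/k}\Gamma(1+\tfrac1k)\). Setting \(q_m=j_m-j_{m-1}-1\ge0\), the constraint \(j_\ell\le n\) becomes \(\sum_m q_m\le n-\ell\); bounding \(\varphi(q_m)\le(1+\varepsilon)c\,q_m^{-1/k}\) for large \(q_m\) and using \(\varphi\le1\) to absorb the (lower-order) tuples with some small \(q_m\), the sum is at most \((1+o(1))c^{\ell}\sum_{\sum_m q_m\le n}\prod_m q_m^{-1/k}\), which is at most \(n^{\ell(1-1/k)}\) times the Dirichlet integral \(\int_{\{y_m\ge0,\ \sum_m y_m\le 1\}}\prod_m y_m^{-1/k}\,\mathrm{d}y=\Gamma(1-\tfrac1k)^{\ell}/\Gamma(\ell+1-\tfrac{\ell}{k})\).

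Combining everything, \(\limsup_n\E{(\recNumNOne/n^{1-1/k})^{\ell}}\le \ell!\,\Gamma(\ell+1-\tfrac{\ell}{k})^{-1}\bigl(c\,\Gamma(1-\tfrac1k)\bigr)^{\ell}\); the reflection formula rewrites \(c\,\Gamma(1-\tfrac1k)=k^{-1}(k!)^{1/k}\Gamma(\tfrac1k)\Gamma(1-\tfrac1k)\) as \(\tfrac{\pi}{k}(k!)^{1/k}\sin(\pi/k)^{-1}\), which is exactly \(\rho_{k,\ell}\); the same computation at \(\ell=1\) gives \(\rho_{k,1}=\tfrac{ck}{k-1}=\eta_{k,1}\), consistent with \autoref{thm:rec:mean}. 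I expect the one delicate point to be the last step — proving \(\varphi(q)\sim c\,q^{-1/k}\) with enough uniformity both to discard the small-\(q_m\) tuples and to replace the lattice sum by the Dirichlet integral; the reduction and the decoupling are straightforward once the disjointness observation is in hand.
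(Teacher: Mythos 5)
Your proposal is correct and is essentially the paper's own argument: the same segment-by-segment decoupling bound (dropping the constraints from ancestors before the previous record position so the factors become independent), the same asymptotic \(\varphi(q)\sim\frac{(k!)^{1/k}}{k}\Gamma(\frac1k)q^{-1/k}\) (the paper's Lemma~\ref{lem:mean:i}), the same passage from the simplex sum to the beta/Dirichlet integral, and the same reflection-formula simplification to \(\rho_{k,\ell}\). The only difference is that you spell out the reduction from ordinary to factorial moments via Stirling numbers, which the paper leaves implicit since the lower-order terms are \(o(n^{\ell(1-1/k)})\).
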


The above theorems imply that the correct rescaling parameter should be \(\nscale\).  However,
unlike the case \(k=1\), when \(k\ge2\) the limit distribution of \(\recNumN/\nscale\) has a rather
complicated representation \(\cB_{k}\) defined as follows:  Let
\(U_{1},E_{1},U_{2},E_{2},\dots\) be mutually independent random variables with \(E_{j} \eql
    \Exp(1)\) and \(U_{j} \eql \Unif[0,1]\).  Let
\begin{align}
    &
    S_{p}
    \eqd
    \left( 
        k! 
        \sum_{1 \le s \le p}
        \left( 
            \prod_{s \le j < p} U_{j}
        \right)
        E_{s}
    \right)^{\frac{1}{k}}
    ,
    \label{eq:Sr}
    \\
    &
    B_{p}
    \eqd
    \left( 1-U_{p} \right)
    \left( \prod_{1 \le j < p} U_{j} \right)^{1-\frac{1}{k}}
    S_{p}
    ,
    \label{eq:Br}
    \\
    &
    \cB_{k}
    \eqd
    \sum_{1 \le p} B_{p}
    ,
    \label{eq:cBk}
\end{align}
where we use the convention that an empty product equals one.

\begin{remark}
    \label{rm:Sr}
    An equivalent recursive definition of \(S_{p}\) is
    \begin{align}
        S_{p}
        =
        \begin{cases}
            k! E_{1} & (p=1), \\
            \left( U_{p-1} S_{p-1}^{k} + k! E_{p} \right)^{\frac{1}{k}} \qquad\qquad & (p \ge 2).
        \end{cases}
        \label{eq:Sr:rec}
    \end{align}
\end{remark}

\begin{theorem}
    \label{thm:rec:limit}
    Let \(k \in \{2,3,\dots\}\).
    Let \(\cL(\cB_{k})\) denote the distribution of \(\cB_{k}\).
    Then \begin{align}
        \cL
        \left( 
        \frac{\recNumN}{\nscale}
        \right)
        \inlaw
        \cL(\cB_{k})
        .
        \label{eq:rec:limit}
    \end{align}
    Thus, by \autoref{thm:rec:mean}, \ref{thm:rec:var} and \ref{thm:rec:high},
    the convergence also holds in \(L^{p}\) for all \(p>0\) and
    \begin{equation}\label{eq:Bk:momment}
        \E{ \cB_{k} } = \eta_{k,1}
        ,
        \qquad
        \E{\cB_{k}^{2}}
        =
        \gamma_{k}
        ,
        \qquad
        \E{\cB_{k}^{p}}
        \in [\eta_{k,1}^{p}, \rho_{k,p}]
        \qquad
        (p \in \N)
        .
    \end{equation}
\end{theorem}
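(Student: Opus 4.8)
The plan is to prove the convergence in distribution by peeling the $k$-record structure of $\Pathn$ from the leaf toward the root, one \emph{block} at a time, identifying the rescaled contribution of the $p$-th block with $B_p$, and then to deduce the $L^p$ and moment statements from the a priori bounds of \autoref{thm:rec:mean}, \ref{thm:rec:var} and \ref{thm:rec:high}. First I would reduce to $1$-records. Since $G_{r,i}\ge G_{1,i}$ pointwise we have $I_{r,i}\le I_{1,i}$, hence $\recNumN=\sum_{r=1}^{k}\recNumNr\le k\,\recNumNOne$, while $\E{\recNumN-\recNumNOne}=\sum_{r=2}^{k}\E{\recNumNr}=o(\nscale)$ by \autoref{thm:rec:mean}; so $(\recNumN-\recNumNOne)/\nscale\to0$ in $L^1$, and it suffices to show $\recNumNOne/\nscale\inlaw\cB_k$.

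For the decomposition, set $a_0=n$ and let $a_1>a_2>\cdots$ be the positions of the $k$-records of $\Pathn$ in decreasing order, i.e.\ $a_1=\argmin_{1\le i\le n}G_{k,i}$ and $a_p=\argmin_{1\le i<a_{p-1}}G_{k,i}$; write $t_p=G_{k,a_p}$ and $L_p=a_{p-1}-a_p$. A node $i$ with $a_p<i\le a_{p-1}$ is an $r$-record exactly when $G_{r,i}<t_p$, so, up to the $\Theta(\log n)$ record nodes themselves, $\recNumNOne=\Theta(\log n)+\sum_{p\ge1}\#\{a_p<i<a_{p-1}:T_{1,i}<t_p\}$. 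The engine is a conditional description: given $(a_1,t_1,\dots,a_{p-1},t_{p-1})$, the holding-time vectors $(T_{1,i},\dots,T_{k,i})_{\,i<a_{p-1}}$ are i.i.d., each equal in law to $k$ independent $\Exp(1)$ variables conditioned to have sum exceeding $t_{p-1}$ — because for exchangeable continuous variables the argmin over a set is uniform on that set and independent of the min value, and the rest remain i.i.d.\ conditioned to exceed it. Using $\p{\Gam(k)\le x}=x^{k}/k!+O(x^{k+1})$ and $\p{\Exp(1)<x}=x+O(x^{2})$ as $x\to0$, an induction on $p$ then yields, for each fixed $P$, the joint convergence
\begin{align*}
    \Bigl(\tfrac{a_p}{a_{p-1}},\ \tfrac{a_{p-1}t_p^{k}}{k!}\Bigr)_{1\le p\le P}
    \ \inlaw\
    \Bigl(U_p,\ \tfrac{S_p^{k}}{k!}\Bigr)_{1\le p\le P},
\end{align*}
with $U_p,E_p$ as in the statement and $S_p$ obeying the recursion in Remark~\ref{rm:Sr}: at step $p$ the ratio $a_p/a_{p-1}\to U_p$ and the increment $a_{p-1}(t_p^{k}-t_{p-1}^{k})/k!\to E_p$ appear as fresh independent variables, and $a_{p-1}t_p^{k}/k!=(a_{p-1}/a_{p-2})(a_{p-2}t_{p-1}^{k}/k!)+a_{p-1}(t_p^{k}-t_{p-1}^{k})/k!\to U_{p-1}(S_{p-1}^{k}/k!)+E_p=S_p^{k}/k!$ (with $a_0t_1^{k}/k!\to E_1$). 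I expect this inductive joint convergence to be the main obstacle: one must verify that each new position ratio and threshold increment are genuinely independent of the past in the limit despite the conditioning ``sum $>t_{p-1}$'', and keep the $O(x)$ error terms uniform enough to add up over the $\Theta(\log n)$ blocks.

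Granting this, for each fixed $p$ the number of $1$-records in block $p$ is, conditionally on $(a_1,t_1,\dots,a_p,t_p)$, a binomial with (essentially) $L_p$ trials and success probability $t_p(1+O(t_p))$, because a node carrying $k$ i.i.d.\ $\Exp(1)$ holding times whose sum exceeds $t_p$ has first holding time below $t_p$ with that conditional probability; its conditional mean is $\sim L_p t_p\sim a_{p-1}(1-U_p)\,S_p\,a_{p-1}^{-1/k}=a_{p-1}^{1-1/k}(1-U_p)S_p$ and its conditional variance is at most the mean, so Chebyshev together with $a_{p-1}^{1-1/k}\sim\nscale\bigl(\prod_{j<p}U_j\bigr)^{1-1/k}$ gives $(\text{block-}p\text{ count})/\nscale\to B_p$, jointly over $p\le P$. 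To pass to the full sum, note that the $1$-records lying in blocks $p>P$ are at most the $1$-cut number $\cX_{a_P,1}$ of the first $a_P$ nodes of $\Pathn$; since $\E{\cX_{m,1}}\le Cm^{1-1/k}$ for all $m$ (by \autoref{thm:rec:mean}), and iterating $\E{a_p^{1-1/k}\mid a_{p-1}}\le\frac{1+o(1)}{2-1/k}\,a_{p-1}^{1-1/k}$ gives $\E{a_P^{1-1/k}}\le C'(2-1/k)^{-P}\,\nscale$, we get $\E{\sum_{p>P}(\text{block-}p\text{ count})}\le C''(2-1/k)^{-P}\nscale$ uniformly in $n$; the same bound controls $\E{\sum_{p>P}B_p}$. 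Combining the finite-$P$ convergence with this uniform tail estimate gives $\recNumNOne/\nscale\inlaw\cB_k$, and hence $\recNumN/\nscale\inlaw\cB_k$.

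Finally, $\recNumN\le k\,\recNumNOne$ and \autoref{thm:rec:high} make $\{(\recNumN/\nscale)^{\ell}\}_n$ bounded in every $L^{m}$, so $\{(\recNumN/\nscale)^{p}\}_n$ is uniformly integrable for every $p>0$; with convergence in distribution this yields convergence in $L^{p}$, i.e.\ $\E{(\recNumN/\nscale)^{p}}\to\E{\cB_k^{p}}$. Moreover $\recNumN=\recNumNOne+o(\nscale)$ in every $L^{p}$ (the higher records satisfy $\E{\recNumNr^{p}}=o(\nscale^{p})$ for $r\ge2$, by the methods of \autoref{thm:rec:mean}, \ref{thm:rec:var} and \ref{thm:rec:high}), so the asymptotics of $\E{\recNumN}$ and $\E{\recNumN^{2}}$ agree with those of $\E{\recNumNOne}$ and $\E{\recNumNOne^{2}}$; feeding \autoref{thm:rec:mean} and \autoref{thm:rec:var} into the moment convergence gives $\E{\cB_k}=\eta_{k,1}$ and $\E{\cB_k^{2}}=\gamma_k$, feeding \autoref{thm:rec:high} gives $\E{\cB_k^{p}}\le\rho_{k,p}$, and $\E{\cB_k^{p}}\ge\eta_{k,1}^{p}$ is Jensen's inequality.
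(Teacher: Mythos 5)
Your strategy is essentially the paper's: reduce to one-records, split the path at the $k$-record positions, prove joint convergence of finitely many rescaled block counts to $(B_1,\dots,B_P)$ via the recursion of Remark~\ref{rm:Sr}, control the leftover blocks uniformly in $n$, and upgrade to $L^p$ and moments by uniform integrability from \autoref{thm:rec:high}. Two execution differences are worth noting. First, you work directly with the true conditional law of a block count, a binomial with success probability $\p{T_{1,i}<t_p \mid G_{k,i}>t_p}=t_p(1+O(t_p))$, whereas the paper introduces the auxiliary alt-one-record count $\precNumN$ built from fresh exponentials (so the success probability is exactly $1-\erm^{-\grecordnp}$), proves \autoref{pro:B:main} for that simplified process, and then closes the gap with the discrepancy coupling of \autoref{lem:gap}. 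Your shortcut is legitimate, and you state the conditioning correctly (the block vectors are i.i.d.\ $\Exp(1)^{\otimes k}$ truncated to have sum above the current record value), at the cost of carrying the $O(t_p)$ perturbation through the binomial law of large numbers; the paper's detour buys an exactly binomial structure. Your inductive joint convergence of $(a_p/a_{p-1},\,a_{p-1}t_p^k/k!)$ is the content of the paper's coupling with uniforms together with Lemmas~\ref{lem:M} and~\ref{lem:Snj:law}, and your Chebyshev step per block plays the role of \autoref{lem:wll}.

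The second difference is the tail control, and this is where your sketch has a genuine soft spot. You bound the blocks beyond $P$ by the number of one-records among nodes $1,\dots,a_P$ and then apply $\E{\cX_{m,1}}\le C m^{1-1/k}$ at $m=a_P$. But $a_P$ is not independent of the clocks below it: given the record data $(a_q,t_q)_{q\le P}$, the vectors at positions $i<a_P$ are conditioned on $G_{k,i}>t_P$, so the remaining segment is not an unconditioned path of $a_P$ nodes, and Theorem~\ref{thm:rec:mean} cannot be invoked at the random index without an extra argument (for instance the IFR domination $(G_{k}\mid G_{k}>t)\preceq t+G_{k}$ combined with the fact that $a_P t_P^{k}$ is tight, or the paper's route in \autoref{lem:tail:Bj}, which instead dominates the block contributions by products of uniforms times gamma variables and uses a fourth-moment concentration bound). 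Your iteration $\E{a_p^{1-1/k}\mid a_{p-1}}\le (2-1/k)^{-1}a_{p-1}^{1-1/k}(1+o(1))$ is fine and gives the right geometric decay once that conditioning issue is patched, and the analogous bound on $\E{\sum_{p>P}B_p}$ needs the short computation carried out in \autoref{lem:tail:Bj} rather than a one-line appeal. The final uniform-integrability argument for the $L^p$ convergence and for \eqref{eq:Bk:momment} (Jensen for the lower bound, \autoref{thm:rec:high} for the upper bound) matches the paper and is sound.
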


\begin{remark}
    The idea behind \(\cB_{k}\) is that we split the path into segments according to the positions
    of \(k\)-records, then count the numbers of one-records in every segment, each of which
    converges to a \(B_{p}\) in the sum \eqref{eq:cBk}.  This will be made rigorous in
    \autoref{sec:limit}. We will also see that \(\cB_{k}\) has a density
    close to a normal distribution in \autoref{sec:B:k}.
\end{remark}

\begin{remark}
It is easy to see that \(\cutNum_{n+1}^{\mathrm e} \eqd \kcutE(P_{n+1}) \eql \cutNumN\) by treating
each edge on a length \(n+1\) path as a node on a length \(n\) path.
\end{remark}

The rest of the paper is organized as follows: \autoref{sec:moment} proves the moment results
\autoref{thm:rec:mean}, \ref{thm:rec:var}, and \ref{thm:rec:high}.  \autoref{sec:limit}
deals with the distributional result \autoref{thm:rec:limit}.  \autoref{sec:tree} discusses some easy
results for general graphs and trees.  Finally, \autoref{sec:int} collects 
analytic results used in the proofs, which may themselves be of interest.

\section{The moments}
\label{sec:moment}

\subsection{The expectation}
\label{sec:mean}

In this section we prove \autoref{thm:rec:mean}.

\begin{lemma}
    \label{lem:mean:i}
    Uniformly for all \(i \ge 1\) and \(r \in \{1,\dots,k\}\),
    \begin{equation}
        \E{\recordRJ{r}{i+1}}
        =
        \left( 1 + \bigO{i^{-\frac{1}{2k}}} \right)
        \frac{(k!)^{\frac{r}{k}} }{k}
        \frac{\Gamma\left( \frac{r}{k} \right)}{\Gamma(r)}
        i^{-\frac{r}{k}}
        .
        \label{eq:mean:i}
    \end{equation}
\end{lemma}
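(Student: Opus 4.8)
The plan is to write \(\E{\recordRJ{r}{i+1}}\) as an explicit integral and extract its asymptotics by a substitution (Laplace‑type) argument. On the path \(\Pathn\) the node at depth \(i+1\) has exactly \(i\) ancestors, namely the nodes at depths \(1,\dots,i\), so by \eqref{eq:rec:def}
\[
    \recordRJ{r}{i+1}
    =
    \iverson{\gclockRJ{r}{i+1} < \min_{1 \le u \le i} \gclockRJ{k}{u}},
\]
where \(\gclockRJ{r}{i+1} \eql \Gam(r)\) and \(\gclockRJ{k}{1},\dots,\gclockRJ{k}{i} \eql \Gam(k)\) are mutually independent (they involve the clocks of disjoint nodes). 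Writing \(f_r\) for the density of \(\Gam(r)\) and \(F_k\) (respectively \(\bar F_k = 1-F_k\)) for the cdf (respectively the tail) of \(\Gam(k)\), conditioning on \(\gclockRJ{r}{i+1}\) gives
\[
    \E{\recordRJ{r}{i+1}}
    =
    \int_0^\infty \bar F_k(t)^i\, f_r(t)\, \mathrm dt
    .
\]
The two facts about \(F_k\) I would use, both immediate from \(F_k(t) = \tfrac1{(k-1)!}\int_0^t s^{k-1}\ee^{-s}\,\mathrm ds\) (and, for integer \(k\), from \(\bar F_k(t) = \ee^{-t}\sum_{j=0}^{k-1}t^j/j!\)), are the local expansion \(F_k(t) = \tfrac{t^k}{k!}(1 + \bigO{t})\) as \(t \to 0\), and the global two‑sided bound \(\tfrac{t^k \ee^{-t}}{k!} \le F_k(t) \le \tfrac{t^k}{k!}\) for all \(t \ge 0\).

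Next I would substitute \(u = i t^k / k!\), which yields
\[
    \E{\recordRJ{r}{i+1}}
    =
    \frac{(k!)^{r/k}}{k\,\Gamma(r)}\, i^{-r/k} \int_0^\infty g_i(u)\,\mathrm du,
    \qquad
    g_i(u) \eqd \bar F_k\bigl((k!u/i)^{1/k}\bigr)^i\, u^{\frac rk - 1}\, \ee^{-(k!u/i)^{1/k}},
\]
so that, since \(\int_0^\infty u^{r/k-1}\ee^{-u}\,\mathrm du = \Gamma(r/k)\), the claim reduces to \(\int_0^\infty g_i(u)\,\mathrm du = \Gamma(r/k)(1 + \bigO{i^{-1/(2k)}})\), which I would prove by splitting the integral at \(A_i \eqd i^{1/(2k)}\). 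On \([0,A_i]\) the quantity \(t = (k!u/i)^{1/k}\) is uniformly small, so the expansion of \(F_k\) together with \(\log(1-x) = -x + \bigO{x^2}\) gives \(i\log\bar F_k(t) = -u + \bigO{u^{1+1/k}i^{-1/k}} + \bigO{u^2/i}\), while \(\ee^{-t} = 1 + \bigO{(u/i)^{1/k}}\); the choice \(A_i = i^{1/(2k)}\) makes all three error terms \(\le 1\) throughout \([0,A_i]\) (this is what pins down the exponent \(1/(2k)\)), so there \(g_i(u) = u^{r/k-1}\ee^{-u}\bigl(1 + \bigO{u^{1+1/k}i^{-1/k} + u^2/i + (u/i)^{1/k}}\bigr)\), and integrating termwise gives \(\int_0^{A_i}g_i(u)\,\mathrm du = \Gamma(r/k) + \bigO{i^{-1/k}}\) (the discarded tail \(\int_{A_i}^\infty u^{r/k-1}\ee^{-u}\,\mathrm du\) is super‑polynomially small in \(A_i\)). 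On \((A_i,\infty)\) I would use only the lower bound \(F_k(t) \ge t^k\ee^{-t}/k!\): for \(A_i < u \le i\) it yields \(\bar F_k(t)^i \le \ee^{-u\ee^{-t}} \le \ee^{-c_k u}\) with \(c_k = \ee^{-(k!)^{1/k}} > 0\), and for \(u > i\) it yields \(\bar F_k(t)^i \le \bar F_k((k!)^{1/k})^i\), which is exponentially small; in either regime \(\int_{A_i}^\infty g_i(u)\,\mathrm du\) is negligible next to \(i^{-1/k}\). Collecting the pieces, \(\int_0^\infty g_i(u)\,\mathrm du = \Gamma(r/k) + \bigO{i^{-1/k}}\), which gives the claimed asymptotics — in fact with the stronger error \(\bigO{i^{-1/k}}\), a fortiori \(\bigO{i^{-1/(2k)}}\). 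Uniformity over \(r \in \{1,\dots,k\}\) is automatic since there are finitely many values, and for the finitely many small \(i\), where \(i^{-1/(2k)}\) is bounded below, the statement holds trivially because both sides are positive with bounded ratio.

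The step I expect to be the main obstacle is making the estimate genuinely uniform in \(i\): one must choose the cutoff \(A_i\) so that the Taylor remainders stay bounded on the main range while the discarded tail mass remains negligible, and one needs a clean upper bound for \(\bar F_k(t)^i\) valid for \emph{all} \(t \ge 0\) rather than only near \(0\) — which is exactly what the elementary inequality \(t^k\ee^{-t}/k! \le F_k(t) \le t^k/k!\) provides. Everything else consists of routine dominated‑convergence‑type estimates, and if preferred the integral asymptotics could instead be packaged among the analytic results collected in \autoref{sec:int}.
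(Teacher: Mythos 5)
Your proposal is correct and follows essentially the same route as the paper: both start from the representation \(\E{\recordRJ{r}{i+1}} = \int_0^\infty \p{\gclockRJ{k}{1}>x}^i \, x^{r-1}\erm^{-x}/\Gamma(r)\,\mathrm{d}x\) and evaluate it by a Laplace-type cutoff argument, the only difference being that the paper packages the integral asymptotics as the reusable \autoref{lem:i:int} (proved via \autoref{lem:gam:approx}), whereas you inline the estimate after the substitution \(u = i t^k/k!\) with a slightly different cutoff and tail bound. Your bookkeeping in fact yields the sharper relative error \(\bigO{i^{-\frac{1}{k}}}\), which of course implies the stated \(\bigO{i^{-\frac{1}{2k}}}\).
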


\begin{proof}
    By \eqref{eq:rec:def},
    \(\E{\recordRJ{r}{i+1}}=\p{\gclockRJ{k}{1}>\gclockRJ{r}{i+1},\dots,\gclockRJ{k}{i}>\gclockRJ{r}{i+1}}\).
    Conditioning on \(\gclockRJ{r}{i+1}=x\) yields \(\E{\recordRJ{r}{i+1}} = \int_{0}^{\infty}
        {x^{r-1} \erm^{-x}}/{\Gamma(r)} \p{\gclockRJ{k}{1} > x}^{i} \, \mathrm{d}x\).
    \autoref{lem:mean:i} thus follows from
    \autoref{lem:i:int}.
\end{proof}

\begin{proof}[Proof of Theorem {\ref{thm:rec:mean}}]
    A simply computation shows that for \(a \in (0,1)\) 
    \begin{equation}
        \sum_{1 \le i \le n} \frac{1}{i^{a}}
        =
        \frac{1}{1-a} n^{1-a}
        +
        O(1)
        .
        \label{eq:em:ia}
    \end{equation}
    It then follows from \autoref{lem:mean:i} that for \(r \in \{1,\dots,k-1\}\).
    \begin{equation}
        \label{eq:mean:r}
        \E{\recNumNr}  
        = 
        \sum_{0 \le i < n}
        \E{\recordRJ{r}{i+1}}
        =
        \frac{(k!)^{\frac{r}{k}} }{k}
        \frac {\Gamma\left( \frac{r}{k} \right)} {\Gamma(r)}
        \frac{1}{1-\frac{r}{k}}
        n^{1-\frac{r}{k}}
        +
        \bigO{n^{1-\frac{r}{k}-\frac{1}{2k}}}
        +
        \bigO{1}
        .
    \end{equation}
    When \(r=k\), \(\E{\recNumNi{k}} = \E{\cR_{n}} \sim \log(n) \) is already well-known.
\end{proof}

\subsection{The variance}
\label{sec:var}

In this section we prove \autoref{thm:rec:var}. 

Let \(E_{i,j}\) denote the event that \([\recordRJ{1}{i+1}\recordRJ{1}{j+1}=1]\).
Let \(A_{x,y}\) denote the event that \([\gclockOnei = x \cap \gclockOnej = y]\).
Then conditioning on \(A_{x,y}\)
\begin{equation}
    E_{i,j}
    =
    \left[ 
        \bigcap_{1 \le s \le i}
        \gclockRJ{k}{s} > x \vee y
    \right]
    \cap
    \left[ 
        \gclockRJ{k}{i+1} > y
    \right]
    \cap
    \left[ 
        \bigcap_{i+2 \le s \le j}
        \gclockRJ{k}{s} > y
    \right]
    ,
    \label{eq:i:j:event}
\end{equation}
where \(x \vee y \eqd \max\{x,y\}\).
Since conditioning on \(A_{x,y}\), \(\gclockRJ{k}{i+1} \eql \Gam(k-1) + x\), \(\gclockRJ{k}{s} \eql \Gam(k)\) for \(s \notin
\{i+1,j+1\}\), and all these random variables are independent, we have
\begin{align}
    \p{E_{i,j}|A_{x,y}}
    =
    \p{G_{k-1,1} + x > y}
    \p{G_{k,1} > x \vee y}^{i}
    \p{G_{k,1} > y}^{j-i-1}
    .
    \label{eq:i:j:prob}
\end{align}
It follows from \(\gclockOnei \eql \gclockOnej \eql \Exp(1)\) that
\begin{align}
    \p{E_{i,j}}
    &
    =
    \int_{0}^{\infty}
    \int_{y}^{\infty}
    \erm^{-x-y}
    \p{E_{i,j}|A_{x,y}}
    \, \mathrm{d}x
    \, \mathrm{d}y
    \\
    &
    \qquad
    +
    \int_{0}^{\infty}
    \int_{0}^{y}
    \erm^{-x-y}
    \p{E_{i,j}|A_{x,y}}
    \, \mathrm{d}x
    \, \mathrm{d}y
    \\
    &
    \eqd
    A_{1,i,j}+ A_{2,i,j}
    .
    \label{eq:int:var:ij}
\end{align}
We next estimate these two terms.

\begin{lemma}
    \label{lem:A2}
    Let \(k \in \{2,3,\dots\}\).  We have
    \begin{align}
        A_{2,i,j}
        =
        \left( 1+\bigO{j^{-\frac{1}{2k}}} \right)
        \frac{(k!)^{\frac{2}{k}} }{k}
        {\Gamma\left( \frac{2}{k} \right)}
        j^{-\frac{2}{k}}
        .
        \label{eq:A2:1}
    \end{align}
\end{lemma}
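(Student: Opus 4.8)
The plan is to collapse the two-dimensional integral defining $A_{2,i,j}$ into a one-dimensional Laplace-type integral of exactly the kind handled in the proof of \autoref{lem:mean:i}, and then read off its asymptotics term by term.

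First I would substitute \eqref{eq:i:j:prob} into the definition of $A_{2,i,j}$. On the region $\{0<x<y\}$ one has $x\vee y=y$, so $\p{G_{k,1}>x\vee y}^{i}\,\p{G_{k,1}>y}^{j-i-1}=\p{G_{k,1}>y}^{j-1}$; in particular the index $i$ disappears, consistent with the stated formula. Evaluating the inner $x$-integral exactly --- using the Erlang identity $\p{G_{r,1}>t}=e^{-t}\sum_{m=0}^{r-1}t^{m}/m!$, or equivalently writing $G_{k,1}=G_{k-1,1}+T$ with $T$ an independent $\Exp(1)$ variable and conditioning on whether $T<y$ --- gives $\int_{0}^{y}e^{-x}\,\p{G_{k-1,1}>y-x}\,\mathrm dx=\p{G_{k,1}>y}-e^{-y}=e^{-y}\sum_{m=1}^{k-1}y^{m}/m!$, and hence
\begin{align}
    A_{2,i,j}
    =\int_{0}^{\infty}e^{-2y}\,\p{G_{k,1}>y}^{j-1}\sum_{m=1}^{k-1}\frac{y^{m}}{m!}\,\mathrm dy
    =\sum_{m=1}^{k-1}\frac{1}{m!}\int_{0}^{\infty}y^{m}\,e^{-2y}\,\p{G_{k,1}>y}^{j-1}\,\mathrm dy .
\end{align}
Equivalently, $A_{2,i,j}=\int_{0}^{\infty}e^{-y}\p{G_{k,1}>y}^{j}\,\mathrm dy-\int_{0}^{\infty}e^{-2y}\p{G_{k,1}>y}^{j-1}\,\mathrm dy$; this is a difference of two integrals each of order $j^{-1/k}$ whose leading terms cancel, which is why $A_{2,i,j}$ is only of order $j^{-2/k}$ (correspondingly, the sum over $m$ above starts at $1$, not $0$).

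Next I would estimate each summand. The integral $\int_{0}^{\infty}y^{m}e^{-2y}\p{G_{k,1}>y}^{j-1}\,\mathrm dy$ is of the form handled by \autoref{lem:i:int}: the weight $e^{-2y}$ plays the role of $e^{-y}$ there and equals $1$ at the origin, while passing from exponent $j$ to $j-1$ multiplies the result only by $1+\bigO{1/j}$, so the leading constant is unchanged. The same argument (the mass concentrates at $y\asymp j^{-1/k}$, where $\p{G_{k,1}>y}=1-y^{k}/k!+\bigO{y^{k+1}}$, after which one substitutes $y=u\,(k!/j)^{1/k}$) yields
\begin{align}
    \int_{0}^{\infty}y^{m}\,e^{-2y}\,\p{G_{k,1}>y}^{j-1}\,\mathrm dy
    =\left(1+\bigO{j^{-\frac{1}{2k}}}\right)\frac{(k!)^{\frac{m+1}{k}}}{k}\,\Gamma\!\left(\frac{m+1}{k}\right)j^{-\frac{m+1}{k}} .
\end{align}
The $m=1$ term equals $\tfrac{(k!)^{2/k}}{k}\Gamma(2/k)\,j^{-2/k}$ and dominates, while the terms with $m\ge 2$ are $\bigO{j^{-3/k}}=\bigO{j^{-2/k}\cdot j^{-1/(2k)}}$ and are absorbed into the error term. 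This is \eqref{eq:A2:1}.

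The only real obstacle is the bookkeeping in the previous paragraph: verifying that \autoref{lem:i:int} (or the computation underlying it) applies verbatim with the extra weight $e^{-2y}$ and the shifted exponent $j-1$, with the error still $\bigO{j^{-1/(2k)}}$ uniformly in $i$. If \autoref{lem:i:int} is phrased too narrowly for this, one simply reruns its proof directly: split the integral at $y=j^{-1/(2k)}$, Taylor-expand $\p{G_{k,1}>y}$ and $e^{-2y}$ and rescale on the small part, and bound the tail by $\p{G_{k,1}>j^{-1/(2k)}}^{j-1}$, which is smaller than any power of $j$.
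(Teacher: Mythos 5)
Your proposal is correct and follows essentially the same route as the paper: on $\{x<y\}$ the $i$-dependence drops out, the inner $x$-integral is evaluated exactly to give $e^{-y}\sum_{m=1}^{k-1}y^{m}/m!$ (the paper writes this via the Poisson--gamma connection, an equivalent computation), and each resulting one-dimensional integral is estimated by \autoref{lem:i:int}, whose statement with general $a\ge 0$ and $b\ge 1$ already covers the weight $e^{-2y}$ and exponent $j-1$, so no rerun of its proof is needed. The $m=1$ term gives the stated leading constant and the $m\ge 2$ terms are absorbed into the error, exactly as in the paper.
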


\begin{proof}
    In this case, \(x \vee y=y\). Thus, by \eqref{eq:i:j:prob}
\begin{align}
    \label{eq:A2}
    A_{2,i,j}
    =
    \int_{0}^{\infty}
    \erm^{-y}
    \p{G_{k,1} > y}^{j-1}
    \int_{0}^{y}
    \erm^{-x}
    \p{G_{k-1,1}  > y-x}
    \, \mathrm{d}x
    \, \mathrm{d}y
    .
\end{align}
Note that the dependence on \(i\) disappears.
Let \(Z\) denote a Poisson random variable with mean \(y-x\).  By the well-known connection between
Poisson and gamma distributions, the inner integral in the above equals
\begin{align}
    \int_{0}^{y}
    \erm^{-x}
    \p{Z<k-1}
    \, \mathrm{d}x
    =
    \int_{0}^{y}
    \erm^{-x}
    \sum_{\ell = 0}^{k-2}
    \erm^{-(y-x)}
    \frac
    {(y-x)^{\ell}}
    {\ell !}
    \, \mathrm{d}x
    =
    \erm^{-y}
    \sum_{\ell = 0}^{k-2}
    \frac
    {y^{\ell+1}}
    {(\ell+1)!}
    .
    \label{eq:A2:inner}
\end{align}
It then follows from \autoref{lem:i:int} that
\begin{align}
    \label{eq:A2:final}
    A_{2,i,j}
    &
    =
    \sum_{\ell = 0}^{k-2}
    \int_{0}^{\infty}
    \erm^{-2y}
    \frac
    {y^{\ell+1}}
    {(\ell+1)!}
    \p{G_{k,1} > y}^{j-1}
    \, \mathrm{d}y
    \\
    &
    =
    \sum_{\ell = 0}^{k-2}
    \left( 1+\bigO{j^{-\frac{1}{2k}}} \right)
    \frac{\left( k! \right)^{\frac{\ell+2}{k}}}{k(\ell+1)!}
    \Gamma \left(\frac{\ell+2}{k} \right)
    j^{-\frac{\ell+2}{k}}
    \\
    &
    =
    \left( 1+\bigO{j^{-\frac{1}{2k}}} \right)
    \frac{(k!)^{\frac{2}{k}} }{k}
    {\Gamma\left( \frac{2}{k} \right)}
    j^{-\frac{2}{k}}
    .
    \qedhere
\end{align}
\end{proof}

\begin{lemma}
    \label{lem:A1}
    Let \(k \in \{2,3,\dots\}\).
    Let \(a=i\) and \(b=j-i-1\). 
    Then for all \(a \ge 1\) and \(b \ge 1\),
    \begin{align}
        A_{1,i,j}
        =
        \xi_{k}(a,b)
        +
        \bigO{
            \left( 
                a^{-\frac{1}{2k}}
                +
                b^{-\frac{1}{2k}}
            \right)
            \left( 
                a^{-\frac{2}{k}}
                +
                b^{-\frac{2}{k}}
            \right)
        }
        ,
        \label{eq:A1}
    \end{align}
    where
    \begin{align}
        \label{eq:xi:def}
        \xi_{k}(a,b)
        \eqd
        \int_{0}^{\infty}
        \int_{y}^{\infty}
        \exp
        \left( 
            -a \frac{x^{k}}{k!}
            -b \frac{y^{k}}{k!}
        \right)
        \, \mathrm{d}x
        \, \mathrm{d}y
        .
    \end{align}
\end{lemma}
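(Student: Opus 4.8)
\emph{Proof plan.} I would first eliminate the dependence of $A_{1,i,j}$ on the leading factor in \eqref{eq:i:j:prob}, and then compare the resulting double integral with $\xi_k(a,b)$ one variable at a time, the engine being an analytic comparison of $\erm^{-x}\p{G_{k,1}>x}^{m}$ with $\erm^{-m x^{k}/k!}$ of the kind supplied by \autoref{lem:i:int}. Recall from \eqref{eq:int:var:ij} that $A_{1,i,j}$ integrates over $\{0<y<x\}$; on that region $G_{k-1,1}+x\ge x>y$, so $\p{G_{k-1,1}+x>y}=1$ and $x\vee y=x$, and hence \eqref{eq:i:j:prob} gives, with $a=i$ and $b=j-i-1$,
\begin{equation}
    A_{1,i,j}
    =
    \int_{0}^{\infty}\int_{y}^{\infty}
    \bigl(\erm^{-x}\p{G_{k,1}>x}^{a}\bigr)\,\bigl(\erm^{-y}\p{G_{k,1}>y}^{b}\bigr)
    \,\mathrm{d}x\,\mathrm{d}y .
\end{equation}
This has exactly the shape of \eqref{eq:xi:def} with $\erm^{-x}\p{G_{k,1}>x}^{a}$ replaced by $\erm^{-a x^{k}/k!}$, and similarly in $y$. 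The two match to leading order because, as $x\to0$, \(\p{G_{k,1}>x}=\p{\Poisson{x}<k}=\sum_{\ell=0}^{k-1}\erm^{-x}x^{\ell}/\ell!=1-x^{k}/k!+\bigO{x^{k+1}}\), so that $\erm^{-x}\p{G_{k,1}>x}^{m}=\erm^{-m x^{k}/k!}\exp\bigl(-x+m\,\bigO{x^{k+1}}\bigr)$.

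Write $h_{m}(x)=\erm^{-x}\p{G_{k,1}>x}^{m}$ and $H_{m}(x)=\erm^{-m x^{k}/k!}$. The estimates I would establish, uniformly in $m\ge1$, are
\begin{equation}
    \int_{0}^{\infty} H_{m}=\frac{(k!)^{1/k}}{k}\,\Gamma(1/k)\,m^{-1/k},
    \qquad
    \int_{0}^{\infty} h_{m}=\bigO{m^{-1/k}},
    \qquad
    \int_{0}^{\infty}\bigl|h_{m}-H_{m}\bigr|=\bigO{m^{-\frac{3}{2k}}} .
\end{equation}
The first is a Gamma integral; the second is the $r=1$ instance of the computation behind \autoref{lem:mean:i}; the third follows from \autoref{lem:i:int} (equivalently, from the expansion above: on a fixed interval $[0,\delta]$ expand $\exp(-x+m\,\bigO{x^{k+1}})-1$ and integrate against $H_m$, which contributes $\bigO{m^{-2/k}}$ via $\int xH_m$ and $\int mx^{k+1}H_m$, while on $[\delta,\infty)$ both $h_m$ and $H_m$ are exponentially small in $m$).

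Now split
\begin{align}
    A_{1,i,j}-\xi_{k}(a,b)
    &=\int_{0}^{\infty}\int_{y}^{\infty}\bigl(h_{a}(x)-H_{a}(x)\bigr)\,h_{b}(y)\,\mathrm{d}x\,\mathrm{d}y
    \\
    &\quad+\int_{0}^{\infty}\int_{y}^{\infty}H_{a}(x)\,\bigl(h_{b}(y)-H_{b}(y)\bigr)\,\mathrm{d}x\,\mathrm{d}y .
\end{align}
Since $h_b\ge0$, swapping the order of integration and bounding $\int_{0}^{x}h_b\le\int_{0}^{\infty}h_b$ shows the first term is at most $\bigl(\int_{0}^{\infty}|h_a-H_a|\bigr)\bigl(\int_{0}^{\infty}h_b\bigr)=\bigO{a^{-\frac{3}{2k}}b^{-\frac1k}}$; symmetrically, using $H_a\ge0$, the second is $\bigO{a^{-\frac1k}b^{-\frac{3}{2k}}}$. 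Writing $a^{-\frac{3}{2k}}b^{-\frac1k}=a^{-\frac1{2k}}(a^{-\frac1k}b^{-\frac1k})$ and using the AM--GM bound $a^{-1/k}b^{-1/k}\le\frac12(a^{-2/k}+b^{-2/k})$ (and the symmetric statement for the other term), both are $\bigO{\bigl(a^{-\frac1{2k}}+b^{-\frac1{2k}}\bigr)\bigl(a^{-\frac2k}+b^{-\frac2k}\bigr)}$, which is the claimed error. (The same bound $\int H_m=\bigO{m^{-1/k}}$ also gives $\xi_k(a,b)\le\int_{0}^{\infty}H_b\cdot\int_{0}^{\infty}H_a=\bigO{a^{-1/k}b^{-1/k}}$, so $\xi_k(a,b)$ is finite and the error term is of strictly lower order.)

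The step I expect to be the main obstacle is the third estimate: one needs an \emph{$L^{1}$} (or uniform pointwise) comparison of $h_m$ and $H_m$ that saves a genuine power of $m$ over their common size $m^{-1/k}$, not merely a matching of the leading asymptotics of the two one-dimensional integrals. This is exactly the content — and the real work — of \autoref{lem:i:int}; everything after it is bookkeeping with elementary Gamma integrals, the constraint $x>y$ causing no difficulty since restricting the domain only helps when bounding absolute values.
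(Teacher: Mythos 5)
Your proposal is correct in substance, and it reaches the lemma by a genuinely different organization than the paper, even though the analytic engine is the same. The paper works directly with the two-dimensional integral \eqref{eq:int:var:ij}: it splits the domain at \(x_0=a^{-\alpha}\), \(y_0=b^{-\alpha}\) with \(\alpha=\tfrac12(\tfrac1k+\tfrac1{k+1})\), applies \autoref{lem:gam:approx} to get a \emph{multiplicative} error \(1+\bigO{a^{-\frac{1}{2k}}+b^{-\frac{1}{2k}}}\) on the inner region, shows the outer region and the re-extension of the domain are exponentially small, and then needs \autoref{lem:bound:xi} (the bound \(\xi_k(a,b)=\bigO{a^{-2/k}+b^{-2/k}}\)) to convert the multiplicative error into the additive error stated in the lemma. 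You instead telescope the difference \(A_{1,i,j}-\xi_k(a,b)\) into two terms, each a product (after Fubini and nonnegativity) of a one-dimensional \(L^1\) difference \(\int|h_m-H_m|\) and a one-dimensional integral; this yields an additive error \(\bigO{a^{-\frac{3}{2k}}b^{-\frac1k}+a^{-\frac1k}b^{-\frac{3}{2k}}}\) directly, which (as you check via AM--GM) sits inside the claimed error, and it dispenses with \autoref{lem:bound:xi} entirely. Your reduction of \eqref{eq:i:j:prob} on the region \(x>y\) (where \(\p{G_{k-1,1}+x>y}=1\) and \(x\vee y=x\)) matches the paper. Two small caveats: the \(L^1\) bound does not literally ``follow from'' the \emph{statement} of \autoref{lem:i:int}, which only compares the two integrals, not the integrands; what you need is the uniform pointwise comparison of \autoref{lem:gam:approx} on \([0,m^{-\alpha}]\) plus stretched-exponentially small tails, which indeed gives \(\int|h_m-H_m|=\bigO{m^{-\frac{3}{2k}}}\) (in fact \(\bigO{m^{-2/k}}\)). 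Relatedly, in your alternative derivation on a fixed interval \([0,\delta]\), the exponent error \(m\,\bigO{x^{k+1}}\) is not uniformly small, so you must either take \(\delta\) small enough that this term is dominated by \(\tfrac{m x^k}{2\,k!}\) before integrating against \(H_m\), or split at \(m^{-\alpha}\) as the paper does. These are routine repairs; the decomposition itself is sound and arguably cleaner, at the modest cost of proving a one-dimensional \(L^1\) estimate that the paper never states explicitly.
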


\begin{proof}
    In this case, \(x\vee y=x\) and \(y-x<0\). 
    Thus, by \eqref{eq:i:j:prob} and \autoref{lem:i:int}
\begin{align}
    A_{1,i,j}
    &
    =
    \int_{0}^{\infty}
    \int_{y}^{\infty}
    \erm^{-x} \erm^{-y}
    \p{G_{k,1}>x}^{i}
    \p{G_{k,1}>y}^{j-i-1}
    \, \mathrm{d}x
    \, \mathrm{d}y
    \\
    &
    =
    \int_{0}^{\infty}
    \int_{y}^{\infty}
    \erm^{-x-y}
    \left( 
        \frac{\Gamma(k,x)}{\Gamma\left( k \right)}
    \right)^{a}
    \left( 
        \frac{\Gamma(k,y)}{\Gamma\left( k \right)}
    \right)^{b}
    \,
    \mathrm{d} x
    \,
    \mathrm{d} y
    ,
    \label{eq:var:2}
\end{align}
where \(\Gamma(\ell,z)\) denotes the upper incomplete gamma function.

Let \(\cS\) be the integration area of \eqref{eq:var:2}. 
Let \(x_{0}=a^{-\alpha}\) and \(y_{0}=b^{-\alpha}\) where \(\alpha=\frac{1}{2}\left(
\frac{1}{k}+\frac{1}{k+1} \right)\). 
Let
\begin{align}
    \cS_{0} = \cS \cap \left\{(x,y) \in \R^{2}: x < x_0, y < y_0 \right\}.
    \label{eq:var:3}
\end{align}
We split \eqref{eq:var:2} into two parts \(A_{1,1}\) and \(A_{1,2}\) with integration area
\(\cS_{0}\) and \(\cS\setminus\cS_{0}\) respectively.

Let \(\beta=\frac{1}{2(k+1)}\).
Let \(x_{1} = a^{\beta}/k!\) and \(y_{1} = b^{\beta}/k!\).
It follows from \autoref{lem:gam:approx} and \autoref{lem:bound:xi} that
\begin{align}
    A_{1,1}
    &
    =
    \left( 
        1
        +
        \bigO
        {
            a^{-\frac{1}{2k}}
            +
            b^{-\frac{1}{2k}}
        }
    \right)
    \iint_{\cS_{0}}
    \exp
    \left( 
        -a \frac{x^{k}}{k!}
        -b \frac{y^{k}}{k!}
    \right)
    \, \mathrm{d}x
    \, \mathrm{d}y
    \\
    &
    =
    \left( 
        1
        +
        \bigO
        {
            a^{-\frac{1}{2k}}
            +
            b^{-\frac{1}{2k}}
        }
    \right)
    \xi_{k}(a,b)
    +
    \bigO{
        \erm^{-x_{1}}
        +
        \erm^{-y_{1}}
    }
    \\
    &
    =
    \xi_{k}(a,b)
    +
    \bigO{
        \left( 
            a^{-\frac{1}{2k}}
            +
            b^{-\frac{1}{2k}}
        \right)
        \left( 
            a^{-\frac{2}{k}}
            +
            b^{-\frac{2}{k}}
        \right)
    }
    +
    \bigO{
        \erm^{-x_{1}}
        +
        \erm^{-y_{1}}
    }
    .
    \label{eq:var:5}
\end{align}
It is not difficult to verify that
\begin{align}
    A_{1,2}
    =
    \bigO{
        \left( 
            \frac{
                \Gamma(k,x_{0})
            }{\Gamma(k)}
        \right)^{-a}
        +
        \left( 
            \frac{
                \Gamma(k,y_{0})
            }{\Gamma(k)}
        \right)^{-b}
    }
    =
    \bigO{
        \erm^{-x_{1}}
        +
        \erm^{-y_{1}}
    }
    .
    \label{eq:var:9}
\end{align}
The lemma follows since \( \erm^{-x_{1}} + \erm^{-y_{1}}\) is exponentially
small.
\end{proof}

\begin{proof}[Proof of \autoref{thm:rec:var}]
We have
\begin{align}
    \E{\recNumNOne\left(\recNumNOne-1 \right)}
    =
    2
    \sum_{i=1}^{n-1}
    \sum_{j=i+1}^{n}
    \p{E_{i,j}}
    =
    2 
    \sum_{i=0}^{n-2}
    \sum_{j=i+1}^{n-1}
    \left( 
        A_{1,i,j}
        +
        A_{2,i,j}
    \right)
    .
    \label{eq:var:11}
\end{align}
Thus, by \autoref{lem:A2} and \eqref{eq:em:ia},
\begin{align}
    \sum_{i=0}^{n-2}
    \sum_{j=i+1}^{n-1}
    A_{2,i,j}
    &
    =
    \sum_{i=0}^{n-2}
    \sum_{j=i+1}^{n-1}
    \left[ 
    \frac{(k!)^{\frac{2}{k}} {\Gamma\left( \frac{2}{k} \right)}}{k}
    j^{-\frac{2}{k}}
    +
    \bigO{j^{-\frac{5}{2k}}}
    \right]
    \\
    &
    =
    \frac{(k!)^{\frac{2}{k}} \Gamma\left( \frac{2}{k} \right)}{2(k-1)}n^{2-\frac{2}{k}}
    +
    \bigO{n^{2-\frac{5}{2k}}}
    .
    \label{eq:var:13}
\end{align}
For \(A_{1,i,j}\), it follows from \autoref{lem:A1} that
\begin{align}
    \sum_{i=0}^{n-2}
    \sum_{j=i+1}^{n-1}
    A_{1,i,j}
    &
    =
    \sum_{a=1}^{n-1}
    \,
    \sum_{b=1}^{n-a}
    \xi_{k}(a,b)
    +
    \bigO{n^{2-\frac{5}{2k}}}
    \\
    &
    =
    \int_{0}^{n}
    \,
    \int_{0}^{n-a}
    \xi_{k}(a,b)
    \,
    \mathrm{d}b
    \,
    \mathrm{d}a
    +
    \bigO{n^{2-\frac{5}{2k}}}
    \\
    &
    =
    n^{2-\frac{2}{k}}
    \int_{0}^{1}
    \,
    \int_{0}^{1-s}
    \xi_{k}(s,t)
    \,
    \mathrm{d}t
    \,
    \mathrm{d}s
    +
    \bigO{n^{2-\frac{5}{2k}}}
    \\
    &
    =
    \lambda_{k}
    n^{2-\frac{2}{k}}
    +
    \bigO{n^{2-\frac{5}{2k}}}
    ,
    \label{eq:var:14}
\end{align}
where the last step follows from \autoref{lem:int:hyper}.
\autoref{thm:rec:var} follows by putting \eqref{eq:var:13}, \eqref{eq:var:14} into
\eqref{eq:var:11}.
\end{proof}

\subsection{Higher moments}
\label{sec:high}

In this section we prove \autoref{thm:rec:high}.

The computations of higher moments of \(\recNumNOne\) are rather complicated. However, an upper bound is readily available. Let \((x)_{\ell}\eqd
    x(x-1)\dots(x-\ell+1)\).  For \(\ell \ge 1\),
\begin{align*}
    \E{(\recNumNOne)_{\ell}}
    &
    =
    \ell!
    \sum_{1 \le i_{1} < i_{2} \dots < i_{\ell} \le n}
    \E{\recordRJ{1}{i_{1}} \recordRJ{1}{i_{2}} \cdots \recordRJ{1}{i_{\ell}}}
    \\
    &
    \le
    \ell!
    \sum_{1 \le i_{1} < i_{2} \dots < i_{\ell} \le n}
    \E{\recordRJ{1}{i_{1}}}
    \E{\recordRJ{1}{i_{2}-i_{1}}}
    \cdots
    \E{\recordRJ{1}{i_{\ell}-i_{\ell-1}}}
    \\
    &
    =
    \ell!
    \sum_{(a_{1}, \dots,a_{\ell}) \in \cS_{n,\ell}}
    \prod_{j=1}^{\ell}
    \E{\recordRJ{1}{a_{j}}}
    ,
    \numberthis\label{eq:high:1}
\end{align*}
where
\begin{equation}\label{eq:Snp}
    \cS_{n,\ell}
    \eqd
    \left\{
        (a_{1},a_{2},\dots,a_{\ell}) \in \N^{\ell}:
        a_{1} \ge 0, \dots, a_{\ell} \ge 0, \sum_{j=1}^{\ell} a_{j} \le n-\ell
    \right\}
        .
\end{equation}
The above inequality holds since if \(i_{j}\) is a one-record in the whole path, then it must also
be a one-record in the segment \( (i_{j-1}+1,\dots,i_{j})\) ignoring everything else, and what
happens in each of such segments are independent. It follows from \autoref{lem:mean:i} that
\eqref{eq:high:1} equals
\begin{equation}\label{eq:high:2}
    \begin{aligned}
        &
        \ell!
    \sum_{(a_{1}, \dots,a_{\ell}) \in \cS_{n,\ell}}
    \prod_{j=1}^{\ell}
    \left( 1 + \bigO{a_{j}^{-\frac{1}{2k}}} \right)
    \frac{(k!)^{\frac{1}{k}} }{k}\Gamma\left( \frac{1}{k} \right)
    a_{j}^{-\frac{1}{k}}
    \\
    &
    =
        \ell!
    n^{\ell\left( 1-\frac{1}{k} \right)}
    \left( 
    \frac{(k!)^{\frac{1}{k}} }{k}\Gamma\left( \frac{1}{k} \right)
    \right)^{\ell}
    \sum_{(a_{1}, \dots,a_{\ell}) \in \cS_{n,\ell}}
    \prod_{j=1}^{\ell}
    \left( 1 + \bigO{a_{j}^{-\frac{1}{2k}}} \right)
    \left(\frac{a_{j}}{n} \right)^{-\frac{1}{k}}
    \frac{1}{n}
    \\
    &
    \sim
    n^{\ell\left( 1-\frac{1}{k} \right)}
        \ell!
    \left( 
    \frac{(k!)^{\frac{1}{k}} }{k}\Gamma\left( \frac{1}{k} \right)
    \right)^{\ell}
    \int_{A_{\ell}}
    \prod_{j=1}^{\ell} x_{j}^{-\frac{1}{k}}
    \, \mathrm{d}(x_{1},\dots,x_{\ell})
    \\
    &
    =
    n^{\ell\left( 1-\frac{1}{k} \right)}
    \ell!
    \left( 
    \frac{(k!)^{\frac{1}{k}} }{k}\Gamma\left( \frac{1}{k} \right)
    \right)^{\ell}
    \zeta_{k,\ell}
    =
    n^{\ell\left( 1-\frac{1}{k} \right)}
    \rho_{k,\ell}
    ,
    \end{aligned}
\end{equation}
where \(A_{\ell}\) is the simplex
\(
    \{(x_{1},\dots,x_{\ell}):x_{1}>0,\dots,x_{\ell}>0,x_{1}+\dots+x_{\ell}<1\},
\)
and
\begin{equation}\label{eq:int:k}
    \zeta_{k,\ell}
    \eqd
    {\Gamma \left(\frac{k-1}{k}\right)^\ell}{\Gamma
        \left(1+\ell-\frac{\ell}{k}\right)^{-1}}
    .
\end{equation}
The above integral is known as the \emph{beta} integral \cite[5.14.1]{NIST}.

\section{Convergence to the \texorpdfstring{\(k\)}{k}-cut distribution}\label{sec:limit}

By \autoref{thm:rec:mean} and Markov's inequality, \({\recNumNr}/{\nscale} \inprob 0\) for \(r \in
    \{2,\dots,k\}\).  So it suffices to prove Theorem \ref{thm:rec:limit}  for \(\recNumNOne\)
instead of \(\recNumN\). Throughout \autoref{sec:limit}, unless otherwise emphasized, we assume that
\(k \ge 2\).

The idea of the proof is to condition on the positions and values of the \(k\)-records, and study the
distribution of the number of one-records between two consecutive \(k\)-records.

We use \(\grecordnSeq\) to denote the \(k\)-record values and
\(\grecPosnSeq\) the positions of these \(k\)-records.
To be precise,  let \(\grecordnJ{0} \eqd 0\), and \(\grecPosnJ{0} \eqd n+1\); for \(p \ge 1\), 
if \(\grecPosnJ{p-1}>1\), then let
\begin{equation}
    \begin{aligned}
    & 
    \grecordnJ{p} \eqd \min \{ \gclockRJ{k}{j}:{1 \le j < \grecPosnJ{p-1}} \},
    \\
    &
    \grecPosnJ{p} \eqd \argmin \{ \gclockRJ{k}{j}:{1 \le j < \grecPosnJ{p-1}} \} ,
    \end{aligned}
    \label{eq:grecord}
\end{equation}
i.e., \(\grecPosnJ{p}\) is the unique positive integer which satisfies that
\(\gclockRJ{k}{\grecPosnJ{p}} \le \gclockRJ{k}{i}\) for all \(1 \le i <
    \grecPosnJ{p-1}\);
otherwise let \(\grecPosnJ{p}=1\) and \(\grecordnJ{p}=\infty\). Note that \(\grecord_{n,1}\) is
simply the minimum of \(n\) \iid{} \(\Gam(k)\) random variables.

According to \(\grecPosnSeq\), we split \(\recNumNOne\) into the following sum
\begin{equation}
    \recNumNOne
    = \sum_{1 \le j \le n} \recordOnej
    = 
    \recNumNk
    +
    \sum_{1 \le p} 
    \sum_{1 \le j}
    \,
    \iverson{\grecPosnJ{p-1} > j > \grecPosnJ{p}}
    \,
    \recordOnej
    \eqd
    \recNumNk
    +
    \sum_{1 \le p} \gBnr
    .
    \label{eq:gBnr}
\end{equation}
Figure \ref{fig:seg} gives an example of \(\mySeq{\gBnr}{p \ge 1}\) for \(n = 12\).
It depicts the positions of the \(k\)-records and the one-records. It also shows the values and the
summation ranges for \(\mySeq{\gBnr}{p \ge 1}\).

\begin{figure}[ht]
\begin{center}
\begin{tikzpicture}[scale=0.95]
    \tikzstyle{k record}=[circle,inner sep=5pt,draw=blue!50,fill=blue!20]
    \tikzstyle{one record}=[rectangle,inner sep=4pt,draw=black,fill=red!0]
    \tikzstyle{no record}=[circle,inner sep=2pt,draw=black,fill=black]
    \tikzstyle{Bnr bracket}=[decoration={brace,mirror,raise=0pt},decorate]
    \pgfmathsetmacro{\YL}{-0.8}
    \pgfmathsetmacro{\YLL}{-1.8}
    \pgfmathsetmacro{\YLLL}{0.8}
    \draw (0,0) -- (12,0);
    \draw[dashed] (12,0) -- (13,0);
    \foreach \x in {0,3,6}
    {
    \node[k record] at (\x,0) {};
    }
    \foreach \x in {0,2,3,4,6,9,11}
    {
    \node[one record] at (\x,0) {};
    }
    \foreach \x in {0,...,12}
    {
        \node[no record] at (\x,0) {};
    }
    \foreach \x in {0,...,13}
    {
        \node[anchor=north] at (\x,-0.3) {$\x$};
    }
    \foreach \x in {0.5, 2.5, 3.5, 5.5, 6.5, 12.5}
    {
        \draw[dotted] (\x, \YLLL) -- (\x, 0);
    }
    \node[anchor=north] at (0,\YL) {$\grecPos_{n,3}$};
    \node[anchor=north] at (3,\YL) {$\grecPos_{n,2}$};
    \node[anchor=north] at (6,\YL) {$\grecPos_{n,1}$};
    \node[anchor=north] at (12,\YL) {$n$};
    \node[anchor=north] at (13,\YL) {$\grecPos_{n,0}$};
    \draw[Bnr bracket] (12.5,\YLLL) -- node[above=4pt] {$\gB_{n,1}=2$} (6.5,\YLLL);
    \draw[Bnr bracket] (5.5,\YLLL) -- node[above=4pt] {$\gB_{n,2}=1$} (3.5,\YLLL);
    \draw[Bnr bracket] (2.5,\YLLL) -- node[above=4pt] {$\gB_{n,3}=1$} (0.5,\YLLL);
    \node[k record,label={0:$k$-record}] at (2,\YLL) {};
    \node[one record,label={0:one-record}] at (6,\YLL) {};
    \node[no record,label={0:node}] at (10,\YLL) {};
\end{tikzpicture}
\end{center}
\caption{An example of \(\mySeq{\gBnr}{p \ge 1}\) for \(n=12\).}
\label{fig:seg}
\end{figure}

Recall that \(\clock_{r,j}\eql\Exp(1)\), is the lapse of time between the
alarm clock of \(j\) rings for the \( (r-1)\)-st time and the \(r\)-th time.
Conditioning on \(\PRseq\),
for \(j \in (\grecPosnp, \grecPos_{n,p-1})\), 
we have
\begin{align}
    \E{\recordOnej}
    &
    =
    \p{
        \clock_{1,j}
        <
        \grecordnp
        \left|
            G_{k,j}
            > 
            \grecordnJ{p-1}
        \right.
    }
    .
    \label{eq:gBnr:dist:1}
\end{align}
Then the distribution of \(\gBnr{}\) 
conditioning on \(\PRseq\)
is simply that of
\begin{align}
    \Bin\left( 
        \grecPosnJ{p-1}-\grecPosnJ{p}-1, 
    \p{
        \clock_{1,j}
        <
        \grecordnp
        \left|
            G_{k,j}
            > 
            \grecordnJ{p-1}
        \right.
    }
    \right)
    ,
    \label{eq:gBnr:dist}
\end{align}
where \(\Bin(m,p)\) denotes a binomial \( (m,p)\) random variable.
When \(\grecordnJ{p-1}\) is small and \(\grecPosnJ{p-1}-\grecPosnp\) is large, this is roughly
\begin{equation}
    \Bin\left( \grecPosnJ{p-1}-\grecPosnJ{p}, \p{\clock_{1,j} < \grecordnJ{p} } \right)
    \eql 
    \Bin\left( \grecPosnJ{p-1}-\grecPosnJ{p}, 1 - \erm^{-\grecordnJ{p}} \right).
    \label{eq:pBnr:bin}
\end{equation}

Therefore, we first study a slightly simplified model. Let
\(\mySeq{\pclock_{r, j}}{r \ge 1, j \ge 1}\) be \iid{} \(\Exp(1)\) which are also independent from
\(\mySeq{\clockRJ{r}{j}}{r \ge 1, j \ge 1}\). Let
\begin{equation}
    \precordnj \eqd \iverson{\pclockj < \min\{\gclock_{k,i} : 1 \le i \le j \}},
    \qquad
    \precNumN \eqd
    \sum_{1 \le j \le n} \precordnj.
    \label{eq:precNumN}
\end{equation}
We say a node \(j\) is an \emph{alt-one-record} if \(\precordnj = 1\).
As in \eqref{eq:gBnr}, we can write
\begin{equation}
    \precNumN 
    = \sum_{1 \le j \le n} \precordnj
    = 
    \sum_{1 \le p} 
    \sum_{1 \le j}
    \,
    \iverson{\grecPosnJ{p-1} > j \ge \grecPosnJ{p}}
    \,
    \precordnj
    \eqd
    \sum_{1 \le p} \pBnr
    .
    \label{eq:pBr}
\end{equation}
Then conditioning on \(\PRseq\), \(\pBnr\) has exactly the distribution
as \eqref{eq:pBnr:bin}. Figure \ref{fig:seg:1} gives an example of \(\mySeq{\pBnr}{p\ge 1}\) for
\(n=12\). 
It shows the positions of alt-one-records, as well as the values and the summation ranges
of\(\mySeq{\pBnr}{p\ge 1}\). 

In the rest of this section, we will first prove the following proposition:
\begin{figure}[ht]
\begin{center}
\begin{tikzpicture}[scale=0.95]
    \tikzstyle{k record}=[circle,inner sep=5pt,draw=blue!50,fill=blue!20]
    \tikzstyle{one record}=[rectangle,inner sep=4pt,draw=black,fill=red!0]
    \tikzstyle{alt record}=[regular polygon,regular polygon sides=3,inner sep=2pt,draw=black,fill=red!0]
    \tikzstyle{no record}=[circle,inner sep=2pt,draw=black,fill=black]
    \tikzstyle{Bnr bracket}=[decoration={brace,mirror,raise=0pt},decorate]
    \pgfmathsetmacro{\YL}{-0.8}
    \pgfmathsetmacro{\YLL}{-1.7}
    \pgfmathsetmacro{\YLLL}{1.6}
    \pgfmathsetmacro{\ZL}{0.8}
    \draw[dashed] (-0.6,0) -- (0,0);
    \draw (0,0) -- (12,0);
    \draw[dashed] (12,0) -- (13,0);
    \draw[dashed] (-0.6,\ZL) -- (0,\ZL);
    \draw (0,\ZL) -- (12,\ZL);
    \draw[dashed] (12,\ZL) -- (13,\ZL);
    \foreach \x in {0,3,6}
    {
    \node[k record] at (\x,0) {};
    }
    \foreach \x in {0,2,3,4,6,9,11}
    {
    \node[one record] at (\x,0) {};
    }
    \foreach \x in {0, 1,5, 4,7,9,10,12}
    {
    \node[alt record] at (\x,\ZL) {};
    }
    \foreach \x in {0,...,12}
    {
        \node[no record] at (\x,0) {};
    }
    \foreach \x in {0,...,13}
    {
        \node[anchor=north] at (\x,-0.3) {$\x$};
    }
    \foreach \x in {-0.5, 2.5, 5.5, 12.5}
    {
        \draw[dotted] (\x,\YLLL) -- (\x,0);
    }
    \node[anchor=north] at (0,\YL) {$\grecPos_{n,3}$};
    \node[anchor=north] at (3,\YL) {$\grecPos_{n,2}$};
    \node[anchor=north] at (6,\YL) {$\grecPos_{n,1}$};
    \node[anchor=north] at (12,\YL) {$n$};
    \node[anchor=north] at (13,\YL) {$\grecPos_{n,0}$};
    \draw[Bnr bracket] (12.5,\YLLL) -- node[above=4pt] {$\pB_{n,1}=4$} (5.5,\YLLL);
    \draw[Bnr bracket] (5.5,\YLLL) -- node[above=4pt] {$\pB_{n,2}=2$} (2.5,\YLLL);
    \draw[Bnr bracket] (2.5,\YLLL) -- node[above=4pt] {$\pB_{n,3}=2$} (-0.5,\YLLL);
    \node[k record,label={0:$k$-record}] at (1,\YLL) {};
    \node[one record,label={0:one-record}] at (4,\YLL) {};
    \node[alt record,label={0:alt-one-record}] at (7,\YLL) {};
    \node[no record,label={0:node}] at (10.5,\YLL) {};
\end{tikzpicture}
\end{center}
\caption{An example of \(\mySeq{\pBnr}{p \ge 1}\) for \(n=12\).}
\label{fig:seg:1}
\end{figure}

The main part of the proof for \autoref{thm:rec:limit} consist of showing the following
\begin{proposition}
    \label{pro:B:main}
    For all fixed \(p \in \N\) and \(k \ge 2\),
    \begin{equation}
        \cL
        \left( 
        \left( 
        \frac{\pB_{n,1}}{\nscale}
        ,
        \dots
        ,
        \frac{\pB_{n,p}}{\nscale}
        \right)
        \right)
        \inlaw 
        \cL\left( 
        \left( 
            B_{1}
            ,
            \dots
            B_{p}
        \right)
        \right)
         ,
        \label{eq:B:coupling:1}
    \end{equation}
    which implies by the Cramér–Wold device that
    \begin{align}
        \cL
        \left( 
        \sum_{1 \le j \le p}
        \frac{\pB_{n,j}}{\nscale}
        \right)
        \inlaw
        \cL
        \left( 
        \sum_{1 \le j \le p}
        B_{j}
        \right)
        ,
        \label{eq:B:coupling}
    \end{align}
\end{proposition}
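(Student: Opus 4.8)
The plan is to prove the joint convergence \eqref{eq:B:coupling:1} by conditioning on the record structure $\PRseq$ and showing that, after rescaling by $\nscale$, the conditional distribution of $(\pB_{n,1},\dots,\pB_{n,p})$ concentrates around a deterministic function of a few asymptotically-distributed quantities, which in the limit becomes exactly $(B_1,\dots,B_p)$. Since \eqref{eq:B:coupling} follows from \eqref{eq:B:coupling:1} by the Cramér–Wold device as stated, the whole job is the vector convergence.

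First I would record the joint distribution of the first few $k$-record values and positions. Writing $\grecordnj$ for the $j$-th smallest of $n$ \iid{} $\Gam(k)$ variables and $\grecPosnj$ for its location, classical results on records (applied to the continuous variables $\gclockRJ{k}{i}$) give that $\grecPosnJ{p-1}/\grecPosnJ{p}$ converges, jointly over $p$, to $1/U_p$ with the $U_p \sim \Unif[0,1]$ independent; equivalently $\grecPosnJ{p}/n \inlaw \prod_{1\le j<p} U_j$. Simultaneously, since the minimum of $n$ \iid{} $\Gam(k)$ variables scales like $(k!\,E_1/n)^{1/k}$ and, conditionally on $\grecPosnJ{p-1}=m$, $\grecordnJ{p}$ is the minimum over $m-1$ fresh $\Gam(k)$'s, one gets $\nscale\cdot\grecordnJ{p} \inlaw (k!\,E_p / \prod_{1\le j<p}U_j)^{1/k}$ after the right bookkeeping — this is precisely the recursive law of $S_p$ in \autoref{rm:Sr}, with the $\prod U_j$ factors accounting for the shrinking index ranges. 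Hence $(\grecPosnJ{0},\dots,\grecPosnJ{p}, \grecordnJ{1},\dots,\grecordnJ{p})$, suitably rescaled, converges jointly to an explicit functional of $(U_1,E_1,\dots,U_p,E_p)$; I would state this as a lemma (or cite the analogous statement in the records literature) and treat it as the backbone.

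Next, conditionally on $\PRseq$, the coordinates $\pBnj$ are \emph{independent} binomials $\Bin(\grecPosnJ{j-1}-\grecPosnJ{j},\,1-\erm^{-\grecordnj})$ by \eqref{eq:pBnr:bin}. Each such binomial has mean $(\grecPosnJ{j-1}-\grecPosnj)(1-\erm^{-\grecordnj})$; since $\grecordnj = \Theta(n^{-1/k}) \to 0$ we have $1-\erm^{-\grecordnj} = \grecordnj(1+o(1))$, so the conditional mean of $\pBnj/\nscale$ is asymptotically $\frac{\grecPosnJ{j-1}-\grecPosnj}{n^{1-1/k}}\cdot\grecordnj$. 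Plugging in $\grecPosnJ{j-1}-\grecPosnj \approx n(1-U_j)\prod_{1\le i<j}U_i$ and $\grecordnj \approx n^{-1/k}(\prod_{1\le i<j}U_i)^{-1/k}S_j$ and collecting exponents of $n$ and of $\prod U_i$ gives exactly $(1-U_j)(\prod_{1\le i<j}U_i)^{1-1/k}S_j = B_j$, matching \eqref{eq:Br}; meanwhile the conditional variance of $\pBnj/\nscale$ is of order (mean)$/\nscale \to 0$, so each coordinate concentrates at its conditional mean. Combining concentration (conditional Chebyshev, or a conditional weak LLN for triangular arrays of independent binomials) with the backbone lemma via the continuous mapping theorem and a standard ``convergence in probability of conditional laws implies unconditional convergence'' argument (e.g.\ bounded test functions, dominated convergence over the conditioning $\sigma$-field) yields \eqref{eq:B:coupling:1}. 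One edge case to dispose of: for a fixed $p$, with probability tending to $1$ all of $\grecPosnJ{0},\dots,\grecPosnJ{p}$ are distinct and $\ge 2$, so the degenerate branch ($\grecPosnJ{p}=1$, $\grecordnJ{p}=\infty$) in \eqref{eq:grecord} contributes negligibly and may be ignored in the limit.

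The main obstacle I anticipate is making the passage from the conditional binomial laws to the unconditional joint limit fully rigorous and uniform: one must control the approximation $1-\erm^{-\grecordnj}\approx\grecordnj$ and the replacement of $\grecPosnJ{j-1}-\grecPosnJ{j}-1$ by $\grecPosnJ{j-1}-\grecPosnJ{j}$ \emph{simultaneously with} the weak convergence of the (random) parameters, and show the error terms vanish on an event of probability $\to 1$ — in particular handling the regime where $\grecordnj$ is atypically large (so the linearization of $1-\erm^{-x}$ is poor) or $\grecPosnJ{j-1}-\grecPosnj$ is atypically small. I would handle this by a truncation: restrict to the event $\{c\le \nscale\,\grecordnJ{j}\le C,\ \delta n\le \grecPosnJ{j-1}-\grecPosnj\}$ for suitable constants, show its probability can be made $\ge 1-\varepsilon$, establish the convergence on it via the above estimates, and let $\varepsilon\to 0$. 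A secondary (but routine) point is that this proposition only addresses the simplified alt-one-record model $\precNumN$; reconciling $\pBnr$ with the true $\gBnr$ from \eqref{eq:pBnr:bin} — i.e.\ justifying the ``roughly'' in that display — is deferred to the remainder of \autoref{sec:limit} and not part of this statement.
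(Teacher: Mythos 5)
Your overall route is the same as the paper's: condition on the $k$-record positions and values, use the conditional binomial representation \eqref{eq:pBnr:bin}, show each $\pBnj/\nscale$ concentrates at its conditional mean, and combine this with a joint limit law for the rescaled positions and record values before de-conditioning. Your computation of the conditional means does produce $B_j$, \emph{provided} the rescaled record values converge jointly to the $S_j$ of \eqref{eq:Sr}.

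The genuine gap is in that backbone step. You assert that, given $\grecPosnJ{p-1}=m$, the next record value $\grecordnp$ is the minimum of $m-1$ \emph{fresh} $\Gam(k)$ variables, and you display the limit $(k!\,E_p/\prod_{1\le j<p}U_j)^{1/k}$ for the rescaled record value. Both statements are incorrect: the variables to the left of the previous record are exactly those known to exceed $\grecordnJ{p-1}$, so $\grecordnp$ is the minimum of $\grecPosnJ{p-1}-1$ \iid{} $\Gam(k)$'s \emph{conditioned to exceed} $\grecordnJ{p-1}$. This conditioning does not vanish in the limit (the previous record value is of the same order $n^{-1/k}$ as the new one) and is precisely what creates the term $U_{p-1}S_{p-1}^{k}$ in the recursion \eqref{eq:Sr:rec}; with genuinely fresh variables you would get $S_p=(k!\,E_p)^{1/k}$ for every $p$, hence a limit different from $\cB_k$. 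Your parenthetical claim that the displayed formula ``is precisely the recursive law of $S_p$'' is therefore inconsistent with the formula itself. The paper's \autoref{lem:M} (the conditioned minimum $M(m,t)=(R_{m,1}\mid R_{m,1}>t)$) and \autoref{lem:Snj:law} (its recursive application, with pointwise density convergence so that it can be combined with the conditional binomial law of large numbers) exist exactly to supply this step, and your proposal needs an equivalent lemma rather than the fresh-minimum heuristic. Two smaller points: the correct rescaling of the record values is $n^{1/k}\grecordnp$, not $\nscale\,\grecordnp$; and the paper sidesteps both your truncation argument and the $1-\erm^{-x}\approx x$ linearization by coupling the positions to uniforms via \eqref{eq:Pnr:U} (so $\grecPosnp/n$ converges almost surely) and by handling $1-\erm^{-\grecordnp}$ directly through $H_m'$ in \autoref{lem:M}, which makes the de-conditioning cleaner than the good-event route you sketch.
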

\noindent Then we can prove that \(p\) can be chosen large enough so that
\( { \sum_{p < j} \pBnj }/{ \nscale } \) is negligible. Thus,
\begin{align}
    \cL
    \left( 
    \frac{
        \precNumN
    }{
        \nscale 
    }
    \right)
    \eqd
    \cL
    \left( 
    \frac{
        \sum_{1 \le j}
        \pBnj
    }{
        \nscale 
    }
    \right)
    \inlaw 
        \cL
        \left( 
        \sum_{1 \le j}
        B_{j}
        \right)
        \eqd
    \cL\left( 
    \cB_{k}
    \right)
    \label{eq:precNumN:as}
    .
\end{align}
Following this, we can use a coupling argument to show that \(\recNumNOne/\nscale\) and
\(\precNumN/\nscale\) converge to the same limit, which finishes the proof of \autoref{thm:rec:limit}.
The section ends with some discussions on \(\cB_{k}\).

\subsection{Proof of \autoref{pro:B:main}}
\label{sec:limit:main}

To prove \eqref{eq:B:coupling}, we construct a coupling by defining all the random
variables being studied in one probability space. Let
\begin{equation} 
    \grecPosnp = \max\left\{ \ceil{U_{p} \left( \grecPosnJ{p-1}-1 \right)}, 1 \right\},
    \label{eq:Pnr:U} 
\end{equation} 
for \(p \ge 1\), where \(\mySeq{U_{j}}{j \ge 1}\) are \iid{} \(\Unif[0,1]\) random variables,
independent of everything else.
This is a valid coupling, since conditioning on \(\grecPosnJ{p-1}\), \(\grecPosnp\) is uniformly
distributed on \(\{1,\dots,\grecPosnJ{p-1}-1\}\).
Note that this implies that for all \(p \in \N\)
\begin{align}
    \frac{\grecPosnp}{n}
    \inas
    \prod_{1 \le s \le p} U_{s}
    .
    \label{eq:Pnr:U:1}
\end{align}
Then conditioning on \(\grecPosnSeq\), we generate the random variables
\(\mySeq{\clock_{r, j}}{r \ge 1, j \ge 1}\) according to their proper conditional distribution,
which determine \( (\gclockRJ{r}{j})_{r\ge 1, j \ge 1}\) and \(\grecordnSeq\).
Let \(\mySeq{\pclock_{r, j}}{r \ge 1, j \ge 1}\) be as before.

Recall that \(R_{m,1}\) is the minimum of \(m\) independent \(\Gam(k)\) random variables. Let
\(M(m,t) \eqd (R_{m,1}|R_{m,1} > t)\) for \(t \ge 0\).  Then conditioning on \(\grecPosnJ{p-1}\) and
\(\grecordnJ{p-1}\), \(\grecordnp\eql M(\grecPosnJ{p-1}-1, \grecordnJ{p-1})\).  The following lemma
allows us to describe the limit distribution of \(\grecordnp\) conditioning on \(\grecPosnJ{p-1}\) and
\(\grecordnJ{p-1}\).
\begin{lemma}
    \label{lem:M}
    Let \(k \in \N\).
    Assume that \(\frac{r_{m}}{m} \to 1\) and \(t \ge 0\).
    Then as \(m \to \infty\),
    \begin{equation}
        H_{m} \eqd r_{m}^{\frac{1}{k}} \cdot M\left(m, {t}{r_{m}^{-\frac{1}{k}}}\right) 
        \inlaw 
        \cL\left( 
            \left(t^{k} + k! E \right)^{\frac{1}{k}}  
        \right)
        ,
        \label{eq:M}
    \end{equation}
    where \(E \eql \Exp(1)\). In particular,
    \(
        m^{\frac{1}{k}}M(m,0) \inlaw 
        \cL
        \left( 
            (k! E)^{\frac{1}{k}}
        \right)
        .
    \)
    The convergence is also point-wise for the density functions.
    The lemma also holds if \(H_{m}\) by is replaced by
    \[
        H_{m}' \eqd r_{m}^{\frac{1}{k}} \cdot 
        \left( 
            1-
            \exp
            \left( 
                -
                M\left(m, {t}{r_{m}^{-\frac{1}{k}}}\right) 
            \right)
        \right)
        .
    \]
\end{lemma}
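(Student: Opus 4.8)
The plan is to work directly with the conditional survival function of $H_{m}$, read off its pointwise limit, and then differentiate to obtain the density statement; the variant $H_{m}'$ will follow from a short perturbation argument. The starting point is the behaviour of a single $\Gam(k)$ variable $G$ near the origin: from the Poisson--gamma identity $\p{G>z}=\sum_{\ell=0}^{k-1}\erm^{-z}z^{\ell}/\ell!$ and a Taylor expansion at $z=0$,
\[
    \p{G>z}=1-\frac{z^{k}}{k!}+\bigO{z^{k+1}},
    \qquad
    f_{G}(z)=\frac{z^{k-1}\erm^{-z}}{\Gamma(k)}=\frac{k}{k!}\,z^{k-1}\bigl(1+\bigO{z}\bigr)
    \qquad (z\downarrow 0).
\]
Write $a_{m}\eqd t\,r_{m}^{-1/k}$. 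Since $R_{m,1}$ is the minimum of $m$ \iid{} copies of $G$, for fixed $s\ge 0$,
\[
    \p{r_{m}^{1/k}R_{m,1}>s}
    =\p{G>s\,r_{m}^{-1/k}}^{m}
    =\left(1-\frac{s^{k}}{k!\,r_{m}}+\bigO{r_{m}^{-(k+1)/k}}\right)^{m},
\]
which converges to $\erm^{-s^{k}/k!}$: take logarithms and use $r_{m}/m\to1$, so that $m/r_{m}\to1$ and $m\,r_{m}^{-(k+1)/k}=(m/r_{m})\,r_{m}^{-1/k}\to0$. Taking $t=0$ and $r_{m}=m$ this is already the stated special case $m^{1/k}M(m,0)=m^{1/k}R_{m,1}\inlaw(k!E)^{1/k}$.

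For general $t$ I would then condition: for $s\ge t$,
\[
    \p{H_{m}>s}
    =\frac{\p{r_{m}^{1/k}R_{m,1}>s}}{\p{r_{m}^{1/k}R_{m,1}>t}}
    \longrightarrow
    \frac{\erm^{-s^{k}/k!}}{\erm^{-t^{k}/k!}}
    =\erm^{-(s^{k}-t^{k})/k!},
\]
while $\p{H_{m}>s}=1$ for $s<t$ because $H_{m}>t$ deterministically. As $(t^{k}+k!E)^{1/k}$ has survival function $\p{E>(s^{k}-t^{k})/k!}=\erm^{-(s^{k}-t^{k})/k!}$ on $s\ge t$ and $1$ on $s<t$, and this limiting distribution function is continuous on $\R$, this gives $H_{m}\inlaw(t^{k}+k!E)^{1/k}$. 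For the densities, the density of $R_{m,1}$ is $m\,f_{G}(z)\,\p{G>z}^{m-1}$, so the density of $H_{m}$ at $s>t$ equals
\[
    f_{H_{m}}(s)=\frac{r_{m}^{-1/k}\,m\,f_{G}\!\left(s\,r_{m}^{-1/k}\right)\p{G>s\,r_{m}^{-1/k}}^{m-1}}{\p{R_{m,1}>a_{m}}}.
\]
Inserting the expansions above, $r_{m}^{-1/k}\,m\,f_{G}(s\,r_{m}^{-1/k})\to(k/k!)\,s^{k-1}$ (using $m/r_{m}\to1$), $\p{G>s\,r_{m}^{-1/k}}^{m-1}\to\erm^{-s^{k}/k!}$ and $\p{R_{m,1}>a_{m}}\to\erm^{-t^{k}/k!}$, whence $f_{H_{m}}(s)\to(k\,s^{k-1}/k!)\,\erm^{-(s^{k}-t^{k})/k!}$ — exactly the density of $(t^{k}+k!E)^{1/k}$ after the substitution $e=(s^{k}-t^{k})/k!$.

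For the variant, note that $M(m,a_{m})=r_{m}^{-1/k}H_{m}\inprob0$ by the tightness just established, and that $x-x^{2}/2\le1-\erm^{-x}\le x$ for $x\ge0$ gives $H_{m}-\tfrac12 r_{m}^{-1/k}H_{m}^{2}\le H_{m}'\le H_{m}$ with $r_{m}^{-1/k}H_{m}^{2}\inprob0$ (since $H_{m}$ converges in distribution), so $H_{m}'\inlaw(t^{k}+k!E)^{1/k}$ by Slutsky. For its density I would use that $H_{m}'$ is a smooth strictly increasing function of $M(m,a_{m})$ and change variables via $z=-\log(1-s\,r_{m}^{-1/k})$, for which $z=s\,r_{m}^{-1/k}(1+\bigO{r_{m}^{-1/k}})$ and $\mathrm{d}z/\mathrm{d}s=r_{m}^{-1/k}/(1-s\,r_{m}^{-1/k})$; repeating the estimates of the previous paragraph then gives the same pointwise limit. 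I expect the only genuinely delicate point to be the error bookkeeping: one must check that the $\bigO{z^{k+1}}$ term in $\p{G>z}$ stays negligible after being raised to the $m$-th power — which works exactly because the hypothesis $r_{m}/m\to1$ forces $m\,r_{m}^{-(k+1)/k}\to0$ — after which the distributional and pointwise-density claims both drop out of the estimates above applied at each fixed $s$.
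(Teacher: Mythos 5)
Your proposal is correct and follows essentially the same route as the paper: compute the conditional survival function $\p{H_{m}>s}$ as a ratio, use the small-argument expansion of the gamma tail $\p{G>z}=1-z^{k}/k!+\bigO{z^{k+1}}$ (the paper packages this as its Lemma~\ref{lem:gam:approx}) together with $r_{m}/m\to1$ to get the limit $\erm^{-(s^{k}-t^{k})/k!}$, and then differentiate for the pointwise density convergence. Your explicit sandwich/Slutsky treatment of $H_{m}'$ just fills in what the paper dismisses with ``a similar argument works,'' so no substantive difference remains.
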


\begin{proof}
    We only prove the lemma for \(H_{m}\). Similar argument works for \(H_{m}'\).
    Let \(y_m=x/r_m^{\frac{1}{k}}\) and let \(s_{m}=t/r_m^{\frac{1}{k}}\).
    By \autoref{lem:gam:approx}, for all fixed \(x \ge t\), 
    \begin{equation}
        \begin{aligned}
        \p{H_{m} > x}
        &
        =
        \frac{\p{R_{m,1} \ge y_m}}{\p{R_{m,1} \ge s_{m}}}
        =
        \left( 
            \frac{
                \Gamma\left( k, y_{m} \right)
            }{
                \Gamma\left( k, s_{m} \right)
            }
        \right)^{m}
        \sim
        \exp \left(m \left( -\frac{y_m^{k}-s_{m}^{k}}{k!}\right)  \right)
        \\
        &
        \to
        \exp \left( - \frac{x^{k}-t^{k}}{k!} \right)
        =
        \p{
            \left( t^{k} + k! E  \right)^{\frac{1}{k}} > x
        }
        .
        \end{aligned}
        \label{eq:H:x}
    \end{equation}
    Using \eqref{eq:H:x} and the derivative formula for the incomplete gamma functions \cite[8.8.13]{NIST}, 
    it is straightforward to verify the point-wise convergence of density functions.
\end{proof}

The next step is to recursively apply \autoref{lem:M} to get a joint convergence in distribution
for \( (S_{n,1},\dots,S_{n,p})\) as well as \( (\Sp_{n,1},\dots,\Sp_{n,p})\), which are basically resacled versions of \(
(\grecordnJ{1},\dots,\grecordnJ{p})\) defined by
\begin{align}
    &
    \Lpnr \eqd \left(n \prod_{1 \le j < p} U_{j} \right)^{\frac{1}{k}}, 
    &
    &
    \Lnr \eqd \left(\grecPosnJ{p-1}-1 \right)^{\frac{1}{k}},
    \\
    &
    \Snr \eqd \Lpnr \grecordnp
    ,
    &
    &
    \Spnr \eqd \Lpnr (1-\erm^{-\grecordnp})
    . 
    \label{eq:Snr}
\end{align}
\begin{lemma}
    \label{lem:Snj:law}
    For all fixed \(p \in \N\) and \(k \in \{2,3,\dots\}\),
    \begin{align}
        \left( S_{n,1}, S_{n,2},\dots,S_{n,p} \right)
        \inlaw
        \cL
        \left( 
        \left( S_{1}, S_{2},\dots,S_{p} \right)
        \right)
        .
        \label{eq:Snj:law}
    \end{align}
    The convergence is also point-wise for the joint distribution functions.
    The lemma holds if \(\Snj\) is replaced by \(\Spnj\).
\end{lemma}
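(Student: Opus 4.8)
The plan is to prove \autoref{lem:Snj:law} by induction on $p$, with \autoref{lem:M} as the workhorse, applied conditionally on the first $p-1$ $k$-records. To be able to chain the distributional statement of \autoref{lem:M} across levels, I first enlarge the coupling of \autoref{sec:limit:main}: keep the variables $\mySeq{U_{j}}{j\ge 1}$ as in \eqref{eq:Pnr:U}, and, given the positions and values of the first $p-1$ $k$-records, generate $\grecordnp$ as $F_{n,p}^{-1}(W_{p})$, where $F_{n,p}$ is the (continuous) conditional distribution function of $\grecordnp$ — which, as recalled just before \autoref{lem:M}, is that of $M(\grecPosnJ{p-1}-1,\grecordnJ{p-1})$ — and $\mySeq{W_{j}}{j\ge 1}$ are \iid{} $\Unif[0,1]$, independent of the $U_{j}$'s. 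Setting $E_{p}\eqd -\log(1-W_{p})$ produces \iid{} $\Exp(1)$ variables independent of $\mySeq{U_{j}}{j\ge1}$, exactly the ingredients of $\cB_{k}$, and, using the Markov property of the record process recalled before \autoref{lem:M}, this construction reproduces the correct joint law of $\mySeq{\grecPosnJ{p},\grecordnJ{p}}{p\ge1}$ for every $n$. Since convergence in distribution depends only on laws, it then suffices to prove the \emph{a.s.} convergence $(S_{n,1},\dots,S_{n,p})\to(S_{1},\dots,S_{p})$ on this enlarged space; pointwise convergence of the joint distribution functions follows because the limit law is absolutely continuous (the level-$p$ conditional density being supplied by \eqref{eq:Sr:rec} and \autoref{lem:M}).

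For $p=1$ we have $\LpnJ{1}=n^{1/k}$ and $\grecord_{n,1}=M(n,0)$, the minimum of $n$ \iid{} $\Gam(k)$ variables, so $S_{n,1}=n^{1/k}M(n,0)$; \autoref{lem:M} with $r_{m}=m=n$ and $t=0$ shows that the distribution function of $S_{n,1}$ converges pointwise to the continuous, strictly increasing distribution function of $(k!E_{1})^{1/k}$, and since $S_{n,1}$ is the corresponding quantile function evaluated at the fixed point $W_{1}$, we get $S_{n,1}\to (k!E_{1})^{1/k}=S_{1}$ a.s.

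For the inductive step, assume $(S_{n,1},\dots,S_{n,p-1})\to(S_{1},\dots,S_{p-1})$ a.s. Condition on $\cH_{n}\eqd\sigma(U_{1},\dots,U_{p-1},W_{1},\dots,W_{p-1})$, with respect to which $\grecPosnJ{p-1}$, $\grecordnJ{p-1}$, $S_{n,1},\dots,S_{n,p-1}$ and $\LpnJ{p}$ are measurable. By \eqref{eq:Pnr:U:1}, a.s.\ $\grecPosnJ{p-1}/n\to\prod_{1\le s\le p-1}U_{s}>0$, so $m_{n}\eqd\grecPosnJ{p-1}-1\to\infty$ and, with $r_{n}\eqd\LpnJ{p}^{k}=n\prod_{1\le j<p}U_{j}$, we have $r_{n}/m_{n}\to1$ a.s. Since $\LpnJ{p}=\LpnJ{p-1}U_{p-1}^{1/k}$ and $S_{n,p-1}=\LpnJ{p-1}\grecordnJ{p-1}$, the conditioning level satisfies $t_{n}\eqd \LpnJ{p}\grecordnJ{p-1}=U_{p-1}^{1/k}S_{n,p-1}\to U_{p-1}^{1/k}S_{p-1}\eqd t$ a.s., and in particular $\grecordnJ{p-1}\to0$ a.s. Now $S_{n,p}=\LpnJ{p}\grecordnp=r_{n}^{1/k}M(m_{n},t_{n}r_{n}^{-1/k})$, and the proof of \autoref{lem:M} applies unchanged with the fixed threshold $t$ replaced by the convergent $t_{n}\to t$ (since $m_{n}t_{n}^{k}/r_{n}\to t^{k}$ and $m_{n}x^{k}/r_{n}\to x^{k}$): the conditional distribution function of $S_{n,p}$ converges pointwise to the continuous distribution function of $(t^{k}+k!E)^{1/k}$. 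Evaluating the corresponding quantile functions at $W_{p}$ gives $S_{n,p}\to (t^{k}+k!E_{p})^{1/k}=(U_{p-1}S_{p-1}^{k}+k!E_{p})^{1/k}=S_{p}$ a.s.\ by \eqref{eq:Sr:rec}. Together with the induction hypothesis, this proves $(S_{n,1},\dots,S_{n,p})\to(S_{1},\dots,S_{p})$ a.s.

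The $\Spnj$ version is then immediate: $\Spnj=\LpnJ{j}(1-\erm^{-\grecordnJ{j}})=S_{n,j}\cdot(1-\erm^{-\grecordnJ{j}})/\grecordnJ{j}$, and the ratio tends to $1$ a.s.\ because $\grecordnJ{j}\to0$ a.s., so $\Spnj\to S_{j}$ a.s.\ as well (one may alternatively invoke the $H_{m}'$ part of \autoref{lem:M} inside the inductive step). The one delicate point throughout is the inductive step: \autoref{lem:M} must be applied with both the index ratio $r_{n}/m_{n}$ and the conditioning level $t_{n}$ random and only convergent a.s., and the resulting \emph{conditional} convergence in distribution must be upgraded to a.s.\ convergence so that it can be fed into the next stage of the induction — which is exactly what the quantile coupling, together with continuity of the limiting distribution functions, accomplishes.
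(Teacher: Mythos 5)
Your proof is correct, but it takes a genuinely different route from the paper's. The paper conditions on \(\cF=\sigma((U_j)_{j\ge1})\), treats the positions and scalings as deterministic, and works directly at the level of densities: by the pointwise density convergence in \autoref{lem:M} (applied with a \emph{fixed} threshold, since one conditions on \(S_{n,p-1}=y_{p-1}\)), each conditional density \(f_{n,p}(\cdot\mid y_{p-1})\) converges pointwise, so the product \(g_{n,p}\) converges pointwise to the joint conditional density of \((S_{1},\dots,S_{p})\), which yields the lemma (and the pointwise convergence of distribution functions, via a Scheff\'e-type step) after de-conditioning. You instead run an induction on \(p\) inside an enlarged quantile (Skorokhod-type) coupling, generating each record value from an independent uniform \(W_{p}\) through its conditional quantile function, and upgrade conditional convergence in distribution to almost sure convergence; your identification of the limit through \eqref{eq:Sr:rec} with \(E_{p}=-\log(1-W_{p})\) is exactly right. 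The price is twofold: you must justify that the enlarged coupling reproduces the joint law of record positions and values (this needs the conditional independence of the next record's position and value given the current pair, the same structural fact underlying \eqref{eq:Pnr:U} in the paper, so it is fine but deserves a sentence), and you need \autoref{lem:M} with a convergent random threshold \(t_{n}\to t\) rather than a fixed \(t\) --- your sketch of why the proof goes through is correct, since the argument only uses \(m_{n}t_{n}^{k}/r_{n}\to t^{k}\) and \(m_{n}x^{k}/r_{n}\to x^{k}\). What your approach buys is an almost-sure statement in an explicit coupling, which is stronger than the purely distributional claim and uses only the distribution-function half of \autoref{lem:M}; what the paper's approach buys is brevity, since the density convergence is already packaged in \autoref{lem:M} and no extension to varying thresholds is required.
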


\begin{proof}
    We only prove the lemma for \(\Snj\). The same argument works for \(\Spnj\).

Let \(\cF=\sigma(\mySeq{U_{j}}{j \ge 1})\) denote the sigma algebra generated by \(\mySeq{U_{j}}{j \ge 1}\).
Throughout the proof of this lemma, we will condition on \(\cF\) and
treat \(\mySeq{U_{p}, \grecPosnp, \Lpnr, \Lnr}{p\ge0,n\ge1}\) as if they are deterministic numbers. 

Let \(f_{n, 1}(\cdot)\) and \(f_{1}(\cdot)\) denote the density functions of \(\SnOne\) and
\(S_{1}\) respectively.  For \(p>1\), let \(f_{n, p}(\cdot|y_{p-1})\) and \(f_{p}(\cdot|y_{p-1})\)
denote the density function of \(\Snr|S_{n,p-1}=y_{p-1}\), and \(S_{p}|S_{p-1}=y_{p-1}\)
respectively.  It follows from \autoref{lem:M} that for all \(y_{1} \ge 0\), \( f_{n,1}(y_{1}) \to
    f_{p}(y_{1})\), and for all \(y_{p} \ge 0\), \( f_{n,p}(y_{p}|y_{p-1}) \to f_{p}\left(
        y_{p}|y_{p-1} \right) .  \) Therefore, for all \(y_{1},\dots,y_{p} \in [0,\infty)^{p}\), as
\(n \to \infty\),
\begin{align}
    g_{n,p}(y_{1},\dots,y_{p})
    &
    \eqd
    f_{n,p}(y_{p}|y_{p-1})
    f_{n, p-1}(y_{p-1}|y_{p-2})
    \dots
    f_{n,1}(y_{1})
    \\
    &
    \to
    f_{p}(y_{p}|y_{p-1})
    f_{p-1}(y_{p-1}|y_{p-2})
    \dots
    f_{1}(y_{1})
    \eqd
    g_{p}(y_{1},\dots,y_{p})
    .
    \label{eq:Snr:to:3}
\end{align}
In other words, the joint density function of \( (\SnOne,\dots,\Snr)\) converges point-wise to the joint density function of 
\( (S_{1},\dots,S_{p})\) conditioning on \(\cF\). Thus, the lemma still holds without
conditioning on \(\cF\).
\end{proof}

One last ingredient needed is the next lemma which follows easily from Chernoff's bound, see, e.g.,
\cite[pp.~43]{reed02}.
\begin{lemma}
    \label{lem:wll}
    Let \(W_{m} \eql \Bin(m, p_{m}) \). If \(\ell_{m} p_{m} \to c \in (0, \infty)\) and \(m /\ell_{m} \to \infty\), then
    \(
        {\ell_{m} W_{m}}/{m}
        \inprob c.
    \)
\end{lemma}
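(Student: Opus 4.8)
The plan is to deduce the statement directly from the multiplicative Chernoff bound for binomial random variables, exactly as the remark preceding the lemma suggests. I would set $\mu_m \eqd \E{W_m} = m p_m$ and first observe that the two hypotheses together force $\mu_m = (m/\ell_m)(\ell_m p_m) \to \infty$, since $\ell_m p_m \to c > 0$ while $m/\ell_m \to \infty$; this is the only point at which both hypotheses are used simultaneously, and it is elementary.

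Next I would reduce the claim to showing $W_m/\mu_m \inprob 1$. Indeed $\ell_m \mu_m/m = \ell_m p_m \to c$ is a deterministic convergence, so once $W_m/\mu_m \inprob 1$ is established it follows that
\[
    \frac{\ell_m W_m}{m} = (\ell_m p_m)\cdot\frac{W_m}{\mu_m} \inprob c\cdot 1 = c,
\]
which is what we want.

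To obtain $W_m/\mu_m \inprob 1$, I would fix $\delta \in (0,1)$ and invoke the Chernoff bound \cite[pp.~43]{reed02} in the form
\[
    \p{\left| W_m - \mu_m \right| \ge \delta \mu_m}
    \le
    2\,\erm^{-\mu_m \delta^2/3},
\]
which is valid for every $m$. Since $\mu_m \to \infty$, the right-hand side tends to $0$ for each fixed $\delta>0$, and this is precisely $W_m/\mu_m \inprob 1$. I do not anticipate any genuine obstacle: the whole argument is a one-line consequence of Chernoff, the only substantive observation being that $\mu_m \to \infty$.

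Alternatively—and perhaps even more simply—one can bypass Chernoff and use Chebyshev's inequality directly: $\E{\ell_m W_m/m} = \ell_m p_m \to c$, while
\[
    \V{\frac{\ell_m W_m}{m}} = \frac{\ell_m^2 p_m(1-p_m)}{m} \le (\ell_m p_m)\cdot\frac{\ell_m}{m} \to c\cdot 0 = 0,
\]
whence $\ell_m W_m/m \inprob c$ at once. I would keep whichever of the two versions sits best with the surrounding exposition; the Chernoff route matches the phrasing of the lemma as stated, but the Chebyshev route is entirely self-contained.
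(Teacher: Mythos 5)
Your proposal is correct and follows the same route as the paper, which simply remarks that the lemma ``follows easily from Chernoff's bound'' with the citation to \cite[pp.~43]{reed02}; your Chernoff argument (noting $\mu_m = m p_m \to \infty$ and then deducing $W_m/\mu_m \inprob 1$) is exactly the intended fleshing-out of that remark. The Chebyshev variant you add is also valid and self-contained, but it is not needed beyond what the paper's one-line justification already covers.
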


\begin{proof}[Proof of {\autoref{pro:B:main}}]
As in the proof of \autoref{lem:Snj:law}, we condition on
\(\cF=\sigma(\mySeq{U_{j}}{j \ge 1})\) and treat \(\mySeq{U_{j}, \grecPosnj, \Lpnj}{j\ge0,n\ge1}\)
as deterministic numbers.
By \eqref{eq:pBnr:bin}, conditioning on \((\SpnOne,\dots,\Spnr)\), \( \pB_{n,1},\dots, \pBnr\)
are independent and for \(j \in \{1,\dots,p\}\),
\begin{align}
    \pBnj
    |
    (\SpnOne,\dots,\Spnr)
    \eql
    \Bin\left( \grecPosnJ{j-1}-\grecPosnJ{j}, \frac{\Spnj}{\Lpnj} \right)
    .
    \label{eq:pBnj:bin:one}
\end{align}
It follows from \eqref{eq:Pnr:U:1} and \autoref{lem:wll} that
\begin{equation}
    \left.
    \frac{\pBnj}{\nscale}
    \right|
    (\SpnOne,\dots,\Spnr)
    \inprob
    (1-U_{j}) \left(\prod_{1 \le s < j} U_s \right)^{1-\frac{1}{k}}
    \Spnj
    .
    \label{eq:pBnj:p}
\end{equation}
Now by \autoref{lem:Snj:law}, the joint density function of \((\SpnOne,\dots,\Spnr)\) converges
point-wise to that of \((S_{1},\dots,S_{p})\). Therefor, jointly, conditioning on \(\cF = \sigma(\mySeq{U_{j}}{j \ge 1})\),
\begin{equation}
        \left( 
        \frac{\pB_{n,1}}{\nscale}
        ,
        \dots
        ,
        \frac{\pB_{n,p}}{\nscale}
        \right)
        \inlaw 
        \cL
        \left( 
        \left( 
            B_{1}
            ,
            \dots
            B_{p}
        \right)
        \right)
        ,
    \label{eq:pBnj:as}
\end{equation}
where (see \eqref{eq:Br} and \eqref{eq:Sr})
\(
    B_{j} \eqd
    (1-U_{j}) \left(\prod_{1 \le s < j} U_s \right)^{1-\frac{1}{k}}
    S_{j}.
\)
Thus, the convergence also holds without conditioning on \(\cF\).
\end{proof}

\subsection{The leftovers}

In this section, we show that for \(p\) large enough, \(\sum_{s > p} B_{s}\), \(\sum_{s > p}
\pBns/\nscale\), and \(\sum_{s > p} \gBns/\nscale\) are all negligible. 
\begin{lemma}
    \label{lem:tail:Bj}
    For all \(k \in \{2,3,\dots\}\),  \(\varepsilon > 0\) and \(\delta > 0\), there exists \(p \in
        \N\) and \(n_0 \in \N\) such that for all \(n > n_{0}\),
    \begin{align}
        \p{\sum_{p < s} B_{s} \ge \varepsilon} < \delta
        ,
        \qquad
        \p{\frac{\sum_{j>p} \gBnj}{\nscale} \ge \varepsilon} 
        <
        \delta
        ,
        \qquad
        \p{\frac{\sum_{j>p} \gBnj}{\nscale} \ge \varepsilon} 
        <
        \delta
        .
        \label{eq:tail:B}
    \end{align}
\end{lemma}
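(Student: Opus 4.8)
\emph{Proof plan.} The plan is to control all three tails through first moments and Markov's inequality. (I read the third displayed probability as the statement for $\sum_{j>p}\pBnj/\nscale$, to match the three quantities listed just above the lemma.) It therefore suffices to establish: (a) $\sum_{p\ge1}\E{B_p}<\infty$; and (b) there are constants $C$ and $c\in(0,1)$, independent of $n$, with $\E{\gBnj}\le 2Cj\,n^{1-1/k}c^{j-1}+2Cj^{2-1/k}\min\{1,n2^{-j}\}$ and the same bound for $\E{\pBnj}$. Granting (a), $\p{\sum_{s>p}B_s\ge\varepsilon}\le\varepsilon^{-1}\sum_{s>p}\E{B_s}\to0$, which handles the first inequality for $p$ large with no restriction on $n$. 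Granting (b), the first terms form a series $n^{1-1/k}\sum_j a_j$ with $(a_j)$ summable, while splitting the tail sum at $j=\log_2 n$ gives $\sum_{j>p}j^{2-1/k}\min\{1,n2^{-j}\}=\bigO{\mathrm{polylog}\,n}$ with implied constant independent of $p$; so, choosing first $p$ and then $n_0$ large, Markov's inequality delivers the remaining two bounds.

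For (a): since $U_p$ is independent of everything else in \eqref{eq:Br}, $\E{B_p}=\tfrac12\,\E{(\prod_{1\le j<p}U_j)^{1-1/k}S_p}$. Applying subadditivity of $x\mapsto x^{1/k}$ to \eqref{eq:Sr}, multiplying through by $(\prod_{1\le j<p}U_j)^{1-1/k}$, and using independence of all the resulting factors yields $\E{B_p}\le\tfrac12(k!)^{1/k}\Gamma(1+\tfrac1k)\sum_{s=1}^p c^{\,s-1}2^{-(p-s)}$ with $c=\E{U_1^{1-1/k}}=k/(2k-1)$; since $k\ge2$ forces $c<1$, this sum is $\bigO{c^{p}}$ and (a) follows.

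For (b), I would work inside the coupling of \autoref{sec:limit:main}, conditioning on $\cF=\sigma(U_j:j\ge1)$ so that the positions $\grecPosnj$ become deterministic and non-increasing in $p$. Two reductions bring each conditional mean down to the $k$-record value. First, by \eqref{eq:pBnj:bin:one} and \eqref{eq:gBnr:dist}, conditionally on the positions and the $k$-record values both $\pBnj$ and $\gBnj$ are stochastically dominated by $\Bin(\grecPosnJ{j-1},1-\erm^{-\grecordnj})$: for $\pBnj$ the success probability is exactly $\Spnj/\Lpnj=1-\erm^{-\grecordnj}$, and for $\gBnj$ the probability in \eqref{eq:gBnr:dist:1} is at most $\p{\clock_{1,j}<\grecordnj}=1-\erm^{-\grecordnj}$ because $\{\clock_{1,j}<\grecordnj\}$ is decreasing and $\{G_{k,j}>\grecordnJ{j-1}\}$ increasing in the independent exponential holding times (Harris' inequality). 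Hence $\E{\gBnj\mid\cF},\E{\pBnj\mid\cF}\le\grecPosnJ{j-1}\,\E{\grecordnj\mid\cF}$, both vanishing when $\grecPosnJ{j-1}=1$. Second, since $\Gam(k)$ has increasing failure rate for $k\ge1$ it is new-better-than-used, so the conditioned minimum $M(m,t)=(R_{m,1}\mid R_{m,1}>t)$ of \autoref{lem:M} satisfies $M(m,t)\preceq_{\mathrm{st}}R'_{m,1}+t$ for an independent copy $R'_{m,1}$. Telescoping $\grecordnj=\sum_{q\le j}(\grecordnJ{q}-\grecordnJ{q-1})$ (with $\grecordnJ{0}=0$), applying this step by step, and using the elementary bound $\E{R_{m,1}}\le Cm^{-1/k}$ for all $m\ge1$ (consistent with \autoref{lem:mean:i}), one gets $\E{\grecordnj\mid\cF}\le C\sum_{q=1}^j(\grecPosnJ{q-1}-1)^{-1/k}$ on $\{\grecPosnJ{j-1}\ge2\}$; since $\grecPosnJ{q-1}\ge\grecPosnJ{j-1}$ for $q\le j$, this is $\le 2Cj(\grecPosnJ{j-1})^{-1/k}$ there, whence $\E{\gBnj\mid\cF},\E{\pBnj\mid\cF}\le2Cj\,(\grecPosnJ{j-1})^{1-1/k}\iverson{\grecPosnJ{j-1}\ge2}$. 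Finally, \eqref{eq:Pnr:U} gives $\grecPosnJ{j-1}\le(n+1)\prod_{1\le s<j}U_s+j$, and since $\grecPosnp$ is uniform on $\{1,\dots,\grecPosnJ{p-1}-1\}$ one has $\p{\grecPosnJ{j-1}\ge2}\le\tfrac12\E{\grecPosnJ{j-1}}=(n+1)2^{-j}$; taking expectations (with $\E{U_1^{1-1/k}}=c$) produces the bound in (b).

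The hard part is the second reduction together with this pairing of factors: the segment length $\grecPosnJ{j-1}-\grecPosnj$ is typically of order $n2^{-j}$ whereas $\grecordnj$ grows like $n^{-1/k}2^{j/k}$, so estimating the two separately gives no decay in $j$ and only a useless $\bigO{n}$ bound. The resolution is that $\grecPosnJ{j-1}$ simultaneously caps the segment length and, through the monotonicity $\grecPosnJ{j-1}\le\grecPosnJ{q-1}$, the sizes of the minima whose sum controls $\grecordnj$ -- which is exactly what the new-better-than-used domination of $M(m,t)$ by $R'_{m,1}+t$ makes usable. Once that estimate is in place, the Harris step for $\gBnj$ and the polylogarithmic bound on the small-segment remainder are routine.
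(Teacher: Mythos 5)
Your proposal is correct in substance, but it takes a genuinely different route from the paper. The paper proves the lemma in detail only for \(\sum_{s>p}B_{s}\) (asserting the two finite-\(n\) sums are ``essentially the same''): it dominates \(B_{s}\) stochastically by \(B_{s}'=\bigl(\bigl(\prod_{j\le s}U_{j}'\bigr)\,k!\sum_{j\le s}E_{j}'\bigr)^{1/k}\), uses the fourth central moment of a \(\Gam(s)\) variable to get \(\p{(B_{s}')^{k}\ge \tfrac32 k!\,s\,\erm^{-s/2}}=\bigO{s^{-2}}\), and finishes with a union bound, so that with probability \(1-\bigO{1/p}\) the whole tail is exponentially small in \(p\). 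You instead run everything through first moments and Markov's inequality: your computation \(\E{B_{p}}=\bigO{c^{p}}\) with \(c=\E{U_{1}^{1-1/k}}=k/(2k-1)<1\) is correct and immediately gives the first bound, and for \(\gBnj\) and \(\pBnj\) you actually supply the details the paper leaves out (including the correct reading of the typo: the third probability should concern \(\pBnj\)), via Harris/FKG to drop the conditioning \(\{G_{k,j}>\grecordnJ{j-1}\}\) in \eqref{eq:gBnr:dist}, and the IFR/NBU property of the minimum of \iid \(\Gam(k)\) variables to dominate \(M(m,t)\) by \(t+R'_{m,1}\) and telescope \(\E{\grecordnj\mid\cF}\le C\sum_{q\le j}(\grecPosnJ{q-1}-1)^{-1/k}\). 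This buys uniform-in-\(n\) moment bounds and a worked-out treatment of the two finite-\(n\) tails, at the price of heavier tools (NBU, FKG) than the paper's elementary concentration step; the paper's route gives exponentially small tails in \(p\) but only for the limit variables. One small repair is needed in your last step: \(\E{\grecPosnJ{j-1}}\) is not \((n+1)2^{-(j-1)}\) (the position chain is absorbed at \(1\)), and the resulting additive constant in \(\tfrac12\E{\grecPosnJ{j-1}}\) would not decay in \(j\), making \(\sum_{j>p}j^{2-1/k}\) diverge. Instead use \(\grecPosnJ{p}-1\le U_{p}(\grecPosnJ{p-1}-1)\) from \eqref{eq:Pnr:U} to get \(\p{\grecPosnJ{j-1}\ge2}\le\E{\grecPosnJ{j-1}-1}\le n2^{-(j-1)}\); with this one-line change your bound \(\min\{1,n2^{-j}\}\) (up to a constant) and the rest of the argument go through.
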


\begin{proof}
    We only give the proof for \(\sum_{s > p} B_{s}\), since the other two can be dealt
    essentially the same way.

    Let \(U_{1}', E_{1}',U_{2}', E_{2}',\dots\) be independent random variables with
    \(U_{j}' \eql \Unif[0,1]\) and \(E_{j}' \eql \Exp(1)\).
    By the definition of \(B_{s}\) (see \eqref{eq:Br} and \eqref{eq:Sr}), we have 
    \begin{equation}
        B_{s} \preceq 
        B'_{s}
        \eqd
        \left( 
        \left( \prod_{1 \le j \le s} U_{j}' \right)
        \left(k! \sum_{1 \le j \le s} E_{s}' \right)
        \right)^{1/k}
        ,
        \label{eq:Bj:dom}
    \end{equation}
    i.e., \(B_{s}\) is stochastically dominated by \(B'_{s}\).
    Thus, we can prove the lemma for \(B'_{s}\) instead.
    Let \(W_{s}\) and \(W_{s}'\) be independent \(\Gam(s)\) random variables. Then
    \begin{equation}
        -\log\left(\prod_{1 \le j \le s} U_{j}' \right)
        \eql 
        W_{s},
        \qquad
        \sum_{1 \le j \le s} E_{j}' \eql W_{s}'
        .
        \label{eq:gamma:j}
    \end{equation}
    It is well known that \(\E{\left( W_{s} -s \right)^{4}} = 3s^{2}+6s\) \cite[pp.~339]{johnson94}.
    It follows from Markov's inequality that for \(s \ge 1\),
    \begin{equation}
        \p{|W_{s}-s| \ge \frac{s}{2}}
        \le
        \frac{\E{\left( W_{s} -s \right)^{4}}}{s^{4}/16}
        =
        \frac{3s^{2}+6s}{s^{4}/16}
        =
        \frac{9s^2}{s^{4}/16}
        \le
        \frac{144}{s^2}
        .
        \label{eq:Wj}
    \end{equation}
    Therefore
    \begin{align}
        \p{\left(B'_{s}  \right)^{k} \ge k! \frac{3}{2}s \erm^{-s/2}}
        &
        \le
        \p{\prod_{1 \le j \le s} U_{j}'
            \ge \erm^{-s/2}}
        +
        \p{\sum_{1 \le j \le s} E_{j}' \ge \frac{3}{2}s}
        \\
        &
        =
        \p{W_{s} \le \frac{s}{2}}
        +
        \p{W_{s}' \ge \frac{3s}{2}}
        =
        \bigO{\frac{1}{s^2}}
        .
        \label{eq:Bj:upper}
    \end{align}
    We are done since 
    \[
        \sum_{s > p} \frac{1}{s^2} = O(p^{-1}), 
        \qquad
        \sum_{s > p} \left( k!  \frac{3}{2} s \erm^{-s/2} \right)^{\frac{1}{k}} =
        \bigO{-\erm^{\frac{p}{4k}}}.
        \qedhere
    \]
\end{proof}

\subsection{Finishing the proof Theorem of \ref{thm:rec:limit}}

By \autoref{lem:tail:Bj}, the contribution of  \(\sum_{s > p} B_{s}\) and \(\sum_{s > p}
\pBns/\nscale\) in \(\sum_{s > 1} B_{s}\) and \(\sum_{s > 1} \pBns/\nscale\) respectively can be
made arbitrarily small by choosing \(p\) large enough.  Thus, it follows from \autoref{pro:B:main}
that \({ \precNumN }/{ \nscale } \inlaw \cL\left( \cB_{k} \right)\) as we claimed.

Now we fill the gap between \(\precNumN\) and \(\recNumNOne\) by the following lemma,
from which \autoref{thm:rec:limit} follows immediately.
\begin{lemma}
    \label{lem:gap}
    Let \(k \in \{2,3,\dots\}\).
    There exists a coupling such that
    \begin{align}
        \label{eq:gap}
        \frac{
            \precNumN - \recNumNOne
        }{
            \nscale
        }
        \inprob
        0
        .
    \end{align}
\end{lemma}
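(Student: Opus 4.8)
The plan is to build a single probability space carrying both the clocks $\mySeq{\clockRJ{r}{j}}{r\ge1,j\ge1}$ (which drive the true one-records $\recordOnej$ and the $k$-records) and the auxiliary clocks $\mySeq{\pclock_{r,j}}{r\ge1,j\ge1}$ (which drive the alt-one-records $\precordnj$), exactly as set up before Proposition \ref{pro:B:main}: the $k$-record positions $\grecPosnSeq$ are generated via \eqref{eq:Pnr:U}, the $k$-record values via their conditional distributions, and $\mySeq{\pclock_{r,j}}{r\ge1,j\ge1}$ independently. In this coupling I will compare the two segment decompositions \eqref{eq:gBnr} and \eqref{eq:pBr} block by block. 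For a fixed block $p$ (i.e. the nodes $j$ with $\grecPosnJ{p}\le j<\grecPosnJ{p-1}$), the true count is $\gBnr=\sum_j \iverson{\grecPosnJ{p-1}>j>\grecPosnJ{p}}\recordOnej$ with $\recordOnej=\iverson{\clock_{1,j}<G_{k,j}\wedge \min_{i\le j}G_{k,i}}$, whereas $\pBnr=\sum_j\iverson{\grecPosnJ{p-1}>j\ge\grecPosnJ{p}}\precordnj$ with $\precordnj=\iverson{\pclockj<\min_{i\le j}G_{k,i}}$. The two differ only because (i) $\gBnr$ uses the extra constraint $\clock_{1,j}<G_{k,j}$ and conditions on $G_{k,j}>\grecordnJ{p-1}$, and (ii) it compares to $\grecordnp$ rather than to $\min_{i\le j}G_{k,i}$, which within the block equals $\grecordnp$ up to the endpoint — i.e. the threshold is the same $\grecordnp$.

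The key quantitative step is to show that, conditioned on $\PRseq$, the block counts $\gBnj$ and $\pBnj$ can be coupled so that their difference is stochastically controlled. Concretely, for $j$ in block $p$, conditional on $\PRseq$ and on $G_{k,j}>\grecordnJ{p-1}$, the event $\{\clock_{1,j}<\grecordnp\}$ has probability $1-\erm^{-\grecordnp}$ exactly when we drop the extra constraint $\clock_{1,j}<G_{k,j}$; the true $\recordOnej$ additionally requires $\clock_{1,j}<G_{k,j}=\clock_{1,j}+\sum_{r=2}^{k}\clock_{r,j}$, which is automatic, and requires $G_{k,j}<\grecordnp$ — no wait, it requires $\clock_{1,j}<\grecordnp$ AND the node still being present, i.e. $G_{k,j}>\grecordnJ{p-1}$. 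So the discrepancy between $\recordOnej$ (with the conditioning $G_{k,j}>\grecordnJ{p-1}$) and $\precordnj$ (with threshold $\grecordnp$ and $\pclockj$ fresh) is exactly the difference between $\Bin(\grecPosnJ{p-1}-\grecPosnp-1,\,\p{\clock_{1,j}<\grecordnp\mid G_{k,j}>\grecordnJ{p-1}})$ from \eqref{eq:gBnr:dist} and $\Bin(\grecPosnJ{p-1}-\grecPosnp,\,1-\erm^{-\grecordnp})$ from \eqref{eq:pBnr:bin}. I will bound $|\,\p{\clock_{1,j}<\grecordnp\mid G_{k,j}>\grecordnJ{p-1}}-(1-\erm^{-\grecordnp})\,|$ using that $\grecordnJ{p-1}$ and $\grecordnp$ are both of order $n^{-1/k}\to0$ (by Lemma \ref{lem:M}, $\grecPosnJ{p-1}^{1/k}\grecordnp$ converges in law), so that conditioning on $G_{k,j}>\grecordnJ{p-1}$ is a negligible perturbation; this makes the per-block discrepancy $o(\nscale)$ in probability for each fixed $p$, exactly as the heuristic after \eqref{eq:gBnr:dist} suggests.

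Combining the pieces: fix $\varepsilon,\delta>0$. By Lemma \ref{lem:tail:Bj} choose $p$ so that $\p{\sum_{j>p}\gBnj/\nscale\ge\varepsilon/3}<\delta/3$ and $\p{\sum_{j>p}\pBnj/\nscale\ge\varepsilon/3}<\delta/3$ for all large $n$ (these are the two "leftover" estimates already proved). For the first $p$ blocks, the coupling and the binomial comparison above give $\sum_{j\le p}|\gBnj-\pBnj|/\nscale\inprob0$; indeed, conditional on $\cF=\sigma(\mySeq{U_j}{j\ge1})$ one has $\grecPosnJ{j-1}-\grecPosnj=\Theta(n)$ and the two binomial parameters differ by $o(n^{-1/k})$, so each rescaled block difference is $o_p(1)$ by Lemma \ref{lem:wll}, and summing $p$ of them keeps it $o_p(1)$; then remove the conditioning on $\cF$ as in the proof of Proposition \ref{pro:B:main}. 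Hence $\p{\sum_{j\le p}|\gBnj-\pBnj|/\nscale\ge\varepsilon/3}<\delta/3$ for large $n$. Adding the three estimates via the triangle inequality $|\precNumN-\recNumNOne|\le\sum_{j\le p}|\gBnj-\pBnj|+\sum_{j>p}\gBnj+\sum_{j>p}\pBnj$ yields $\p{|\precNumN-\recNumNOne|/\nscale\ge\varepsilon}<\delta$, which is \eqref{eq:gap}. The main obstacle is the per-block binomial comparison — one must handle the fact that $\gBnr$ is a \emph{conditioned} binomial (conditioning on the node surviving past $\grecordnJ{p-1}$) and that its threshold and the alt-model threshold, while both equal to $\grecordnp$, are attached to a different number of summands (off by one endpoint) and a slightly different success probability; showing that all of these corrections vanish after dividing by $\nscale$, uniformly enough over the first $p$ blocks, is where the care lies.
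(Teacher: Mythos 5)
Your proposal is correct in outline, but it reaches the lemma by a genuinely different route than the paper. The paper's proof works at the level of the clocks: it sets \(\clock_{i,j}=\pclock_{i,j}\) at every non-record position unless the alt-clocks would violate the conditioning (a ``discrepancy'', \(\sum_{i}\pclock_{i,j}<\grecordnp\)), so that \(\recordOnej=\precordnj\) at all nodes except the \(k\)-record positions and the discrepancy nodes; writing \(J_{n,p}\) for the number of discrepancies in block \(p\), it bounds \(|\recNumNOne-\precNumN|\le\sum_{j\le p}J_{n,j}+2\recNumNk+\sum_{j>p}\gBnj+\sum_{j>p}\pBnj\) and shows \(\E{J_{n,p}\mid\cdot}\le\grecPosnJ{p-1}\grecordnp^{k}\), which is tight (stochastically dominated in the limit by \(k!\,\Gam(p)\)), so each fixed block contributes only \(O_p(1)\) disagreements. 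You instead compare, conditionally on the record data, the two binomial laws \eqref{eq:gBnr:dist} and \eqref{eq:pBnr:bin}: the trial counts differ by one, the success probabilities differ only through the conditioning on the node's own \(G_{k,j}\), i.e.\ by \(O_p\!\left(\p{G_{k,1}\le\grecordnJ{p-1}}\right)=O_p(1/n)=o_p\!\left(n^{-1/k}\right)\), so the conditional means differ by \(O_p(1)\), while each block count concentrates around its mean at scale \(o_p(\nscale)\). A pleasant feature of your route is that no clever trial-level coupling is needed at all: conditional independence of \(\gBnj\) and \(\pBnj\) suffices because the fluctuations are of lower order than \(\nscale\), whereas the paper's construction makes the two processes literally coincide off a small random set. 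What the paper's argument buys in exchange is a cleaner bookkeeping identity (the difference is exactly controlled by discrepancies plus record positions) and no need to quantify binomial concentration.

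Two points must be added to make your sketch complete. First, your final triangle inequality omits the term \(\recNumNk\) from the decomposition \eqref{eq:gBnr}: the one-records sitting at the \(k\)-record positions belong to no \(\gBnj\), so the correct bound is \(|\precNumN-\recNumNOne|\le\recNumNk+\sum_{j\le p}|\gBnj-\pBnj|+\sum_{j>p}\gBnj+\sum_{j>p}\pBnj\); this extra term is \(O_p(\log n)\) by \autoref{thm:rec:mean}, hence harmless, but it cannot simply be dropped. Second, \autoref{lem:wll} is stated for deterministic parameters, while your success probabilities and trial counts involve the random \(\grecordnp\) and \(\grecPosnp\); you must condition on the record values and positions and use \autoref{lem:Snj:law} (as in \eqref{eq:pBnj:p} in the proof of \autoref{pro:B:main}) — you do indicate this, and together with the explicit \(O_p(1/n)\) estimate of the probability gap above, the per-block comparison becomes rigorous.
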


\begin{proof}
Recall that \(\mySeq{\pclock_{i,j}}{i \ge 1, j \ge 1}\)
are \iid{} \(\Exp(1)\) random variables that we used, together with \(\mySeq{\grecPosnj,
\grecordnj}{j\ge0}\) to define \(\precNumN\). Now we modify \(\mySeq{\clock_{i,j}}{i \ge 1,
j \ge 1}\) by letting \(\clock_{i,j} = \pclock_{i,j}\) for all \(i \in \N\) and
\(j \not \in \{\grecPosnJ{j}\}_{j \ge 0}\), unless there is a
\emph{discrepancy}, i.e., if for some \(p \ge 1\),
\begin{align}
    \grecPosnJ{p-1} < j < \grecPosnJ{p}
    ,
    \qquad
    \text{and}
    \qquad
    \sum_{i=1}^{k} \pclock_{j, i} < \grecordnp
    .
    \label{eq:disc}
\end{align}
This is a valid coupling since it does not change the distribution of \(\mySeq{\gBnj}{j \ge 1}\).

Let \(J_{n, p}\) denote the number of discrepancies between \(\grecPosnJ{p-1}\) and \(\grecPosnJ{p}\), i.e.,
\begin{align}
    J_{n,p}
    &
    =
    \sum_{j \ge 1}
    \iverson{
        \grecPosnJ{p-1} < j < \grecPosnJ{p}
    }
    \iverson{
        \grecordnp
        >
        \sum_{1 \le i \le k}
        \pclock_{i,j}
    }
    .
    \label{eq:disc:1}
\end{align}
By the definition \eqref{eq:gBnr} and \eqref{eq:pBr},
with the above coupling, for all fixed \(p \in \N\),
\begin{align}
    |\recNumNOne-\precNumN|
    \le
    \sum_{1 \le j \le p}
    J_{n,j}
    +
    2 \recNumNk
    +
    \sum_{j > p} \gBnr
    +
    \sum_{j > p} \pBnr
    .
    \label{eq:disc:7}
\end{align}
By \autoref{thm:rec:mean}, we have \(\recNumNk/\nscale \inprob 0\).  It follows from 
\autoref{lem:tail:Bj} that by choosing \(p\) large enough, the last two
terms of the right-hand-side of \eqref{eq:disc:7} divided by \(\nscale\) are negligible. Thus, it suffices to 
show that \(\sum_{1 \le j \le p} J_{n,j}/n^{1-1/k} \inprob 0\).

Conditioning on \(\mySeq{\grecordnj, \grecPosnj}{n \ge 1, j \ge 0}\),
\begin{align}
    J_{n,p}
    \eql
    \Bin
    \left( 
        \grecPosnJ{p-1} - \grecPosnJ{p} -1,
        \p{G_{k} < \grecordnp}
    \right)
    ,
    \label{eq:disc:2}
\end{align}
where \(G_{k}\eql\Gam(k)\).
Therefore, it follows from 
the series expansion of the incomplete gamma function \cite[8.7.3]{NIST} 
that
\begin{align}
    \E{J_{n,p}~|~\mySeq{\grecordnj, \grecPosnj}{n \ge 1, j \ge 0}}
    &
    \le
    (\grecPosnJ{p-1} - \grecPosnJ{p})
    \cdot
    \left(1-\frac{\Gamma(k, \grecordnp)}{\Gamma(k)} \right)
    \\
    &
    \le
    \grecPosnJ{p-1} \grecordnp^{k}
    =
    \frac{\grecPosnJ{p-1}}{(\Lpnr)^{k}}(\Snr)^{k}
    \inlaw
    S_{p}^{k}
    ,
    \label{eq:disc:3}
\end{align}
where the converges follows from \eqref{eq:Pnr:U:1} and \autoref{lem:Snj:law}.
By the definition \eqref{eq:Sr},
\(
S_{p}^{k} \preceq k! G_{p}\).
Thus
for all fixed \(p \in \N\), \(\sup_{n \ge 1} \E{J_{n,p}} < \infty\) and
\(
    \sum_{1 \le i \le p} {J_{n,i}}/{\nscale} \inprob 0.
\)
\end{proof}

\subsection{The density of \(\cB_{k}\)}
\label{sec:B:k}

\begin{lemma}\label{lem:B:density}
    For all \(k \in \{2,3,\dots\}\) the random variable \(\cB_{k}\) defined in \eqref{eq:cBk} has a density function.
\end{lemma}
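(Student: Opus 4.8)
The plan is to show that $\cB_k$ has an absolutely continuous distribution by isolating a single summand from the series \eqref{eq:cBk} that already has a density, and conditionally independent of it. Write $\cB_k = B_1 + \sum_{p \ge 2} B_p$. From \eqref{eq:Sr} and \eqref{eq:Br} we have $S_1 = (k! E_1)^{1/k}$ and $B_1 = (1-U_1) S_1 = (1-U_1)(k! E_1)^{1/k}$, which is a function of $(U_1, E_1)$ alone. The key observation is that the tail sum $R \eqd \sum_{p \ge 2} B_p$, although it depends on $U_1$ through the factors $\prod_{1 \le j < p} U_j$ in \eqref{eq:Br} and through the recursion $S_p = (U_{p-1} S_{p-1}^k + k! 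E_p)^{1/k}$, does \emph{not} depend on $E_1$: indeed $S_1$ enters $S_2$ only via $U_1 S_1^k = U_1 k! E_1$, but more usefully we can condition on $U_1$ and on $S_1^k = k! E_1$ separately. So first I would condition on the $\sigma$-algebra $\cG$ generated by $U_1$ and by all of $(U_j, E_j)_{j \ge 2}$; under this conditioning, $\sum_{p \ge 2} B_p$ becomes a constant (call it $c$, which is finite a.s. by \autoref{lem:tail:Bj}), and $\cB_k = (1-U_1)(k! E_1)^{1/k} + c$ with $U_1$ now the only remaining randomness among the variables we conditioned away — wait, we also conditioned on $U_1$.

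Let me reorganize: condition instead on the $\sigma$-algebra $\cG$ generated by $U_1$ together with all $(U_j, E_j)_{j \ge 2}$, but \textbf{not} on $E_1$. The issue is that $S_p$ for $p \ge 2$ depends on $E_1$ through $S_1$. To break this, I would use that, conditionally on $U_1 = u$ and on $(U_j, E_j)_{j\ge 2}$, the tail $R = \sum_{p\ge 2} B_p$ is a \emph{strictly increasing, absolutely continuous} function $\varphi$ of the single variable $E_1$ (since $S_2^k = U_1 k! E_1 + k! E_2$ is linear in $E_1$ with positive slope $U_1 k! > 0$ a.s., and each subsequent $S_p^k$ and hence each $B_p$ is a smooth increasing function of $S_2$, and the convergent sum of increasing smooth functions whose sum is finite is increasing and, being a monotone limit, at least measurable; a bit of care gives that $\varphi$ is actually continuous and strictly increasing). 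Meanwhile $B_1 = (1-u)(k! E_1)^{1/k}$ is also a strictly increasing absolutely continuous function $\psi$ of $E_1$ on $\{u < 1\}$, which has probability one. Then $\cB_k = \psi(E_1) + \varphi(E_1)$ is a strictly increasing, and (being a sum of two absolutely continuous functions) absolutely continuous, function of the single exponential variable $E_1$; hence conditionally on $\cG$ the law of $\cB_k$ has a density. Integrating out $\cG$ preserves absolute continuity, so $\cB_k$ has a density.

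The main obstacle is establishing the regularity of the map $E_1 \mapsto R = \sum_{p \ge 2} B_p$ carefully enough: one must check that term-by-term differentiation (or at least monotonicity plus continuity) of the infinite series is justified, uniformly enough on $\cG$-almost-every realization. Here I would lean on the stochastic domination bound \eqref{eq:Bj:dom} from the proof of \autoref{lem:tail:Bj}: each $B_p'$ has exponentially small tails, which gives a dominated-convergence justification for differentiating the series in $E_1$, and shows $\sum_{p \ge 2} \partial B_p / \partial E_1$ converges locally uniformly. An alternative, perhaps cleaner route that sidesteps differentiability entirely: condition on \emph{everything except} $E_1$, note $\cB_k = \psi(E_1) + \varphi(E_1)$ with $\psi$ strictly increasing and continuous and $\varphi$ nondecreasing (monotonicity alone suffices for $\varphi$, since a continuous strictly increasing plus a nondecreasing function is still a continuous strictly increasing function, hence maps the absolutely continuous law of $E_1$ to an absolutely continuous law as long as the sum is not eventually constant — and $\psi$ alone already prevents that). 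This monotonicity-only version avoids the delicate uniform differentiation of the series and is what I would write up.
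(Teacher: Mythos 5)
Your overall approach is the same as the paper's: condition on all the randomness except \(E_1\) (the paper conditions on \(Z=(U_1,U_2,E_2,U_3,E_3,\dots)\)) and observe that, given this, \(\cB_k\) becomes a monotone function of the single variable \(E_1\); in the paper's notation \(\cB_k=\sum_{p\ge 1}(a(p)+b(p)E_1)^{1/k}\), with the first summand \(B_1=(1-U_1)(k!E_1)^{1/k}\) supplying strict increase almost surely, which is exactly your decomposition \(\cB_k=\psi(E_1)+\varphi(E_1)\) with \(\psi\) strictly increasing and \(\varphi\) nondecreasing. Your observation that monotonicity of the tail in \(E_1\) suffices, so that no term-by-term differentiation of the series is needed, is the right simplification and matches the spirit of the paper's (very terse) proof.

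There is, however, one step in your preferred ``monotonicity-only'' write-up that, as literally stated, would fail: it is not true in general that a continuous, strictly increasing function pushes an absolutely continuous law forward to an absolutely continuous law. There exist strictly increasing continuous functions whose derivative vanishes Lebesgue-almost everywhere (singular functions), and these map absolutely continuous laws to singular ones; absolute continuity of the forward map \(\psi\) does not rescue this either, since what is needed is control of the \emph{inverse}. The argument is saved by the explicit form of \(\psi\): on every compact subinterval of \((0,\infty)\) one has \(\psi'(e)=\frac{1-U_1}{k}(k!)^{1/k}e^{\frac{1}{k}-1}\ge c>0\), so for \(e>e'\) in that interval \(g(e)-g(e')\ge\psi(e)-\psi(e')\ge c(e-e')\) (using that \(\varphi\) is nondecreasing, each \(B_p\) being nondecreasing in \(E_1\) by \eqref{eq:Sr} and \eqref{eq:Br}). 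Hence \(g=\psi+\varphi\) has a locally Lipschitz inverse on its range, so \(g^{-1}\) maps Lebesgue-null sets to null sets, and the conditional law of \(\cB_k\) given your \(\cG\) is absolutely continuous; integrating out \(\cG\) then finishes the proof as you say. With this one-line repair (which also renders your first, domination-and-differentiation route unnecessary), your argument is complete and coincides with the paper's, whose own assertion that conditioning on \(Z\) gives a density whenever some \(b(p)\ne 0\) is implicitly relying on the same positivity of the derivative (for instance via concavity of each summand in \(E_1\)).
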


\begin{proof}
The random variable $\cB_k$ can be written as
\(
  \sum_{1 \le p} (a(p) + b(p) E_1 )^{1/k},
\)
where $a(p)$ and $b(p)$ are (complicated) non-negative functions of the random vector $Z \eqd
(U_1,U_2,E_2, U_3, E_3,\dots)$.  Conditioning on $Z$, $\cB_k$ has a density provided that $b(p) \ne 0$
for some $p$.  Thus, a sufficient condition for $\cB_k$ to have a density is that $\p{b(1)=0} = 0$,
which is obvious since \(b(1)=(1-U_{1})^{1/k}k!\)
\end{proof}

It is not easy to see what the density function of \(\cB_{k}\) should be like analytically. But
through simulation, it is obvious that \(\cB_{k}\) has a density very close to that of the normal
distribution \(\cN(\e \cB_{k}, \sqrt{\V{\cB_{k}}})\), see \autoref{fig:Bk}. It is perhaps not so
surprising. Once the positions and values of \(k\)-records are fixed, \(\recNumNOne\) is
simply a sum of independent indicator random variables, which often gives rise to the normal
distribution. Comparing \autoref{fig:k2} with the simulation result for \(\recNumN\) with \(k=2\)
shown in \autoref{fig:k2-sim}, we see that \(\cB_{k}\) is indeed the limit distribution of
\(\recNumN\).

\begin{figure}[ht]
\begin{subfigure}{.5\textwidth}
  \centering
  \includegraphics[width=\linewidth]{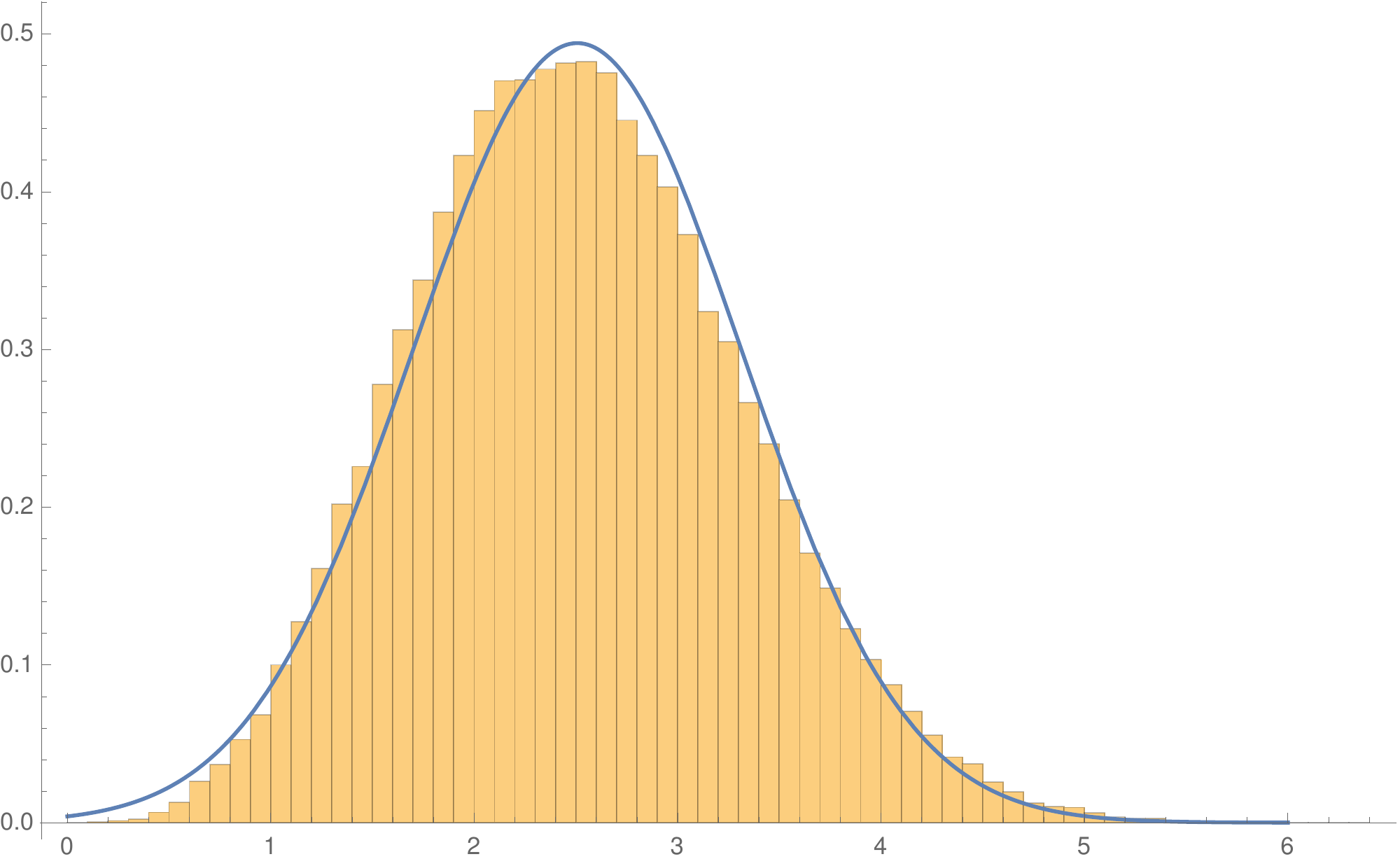}
  \caption{\(k=2\)}
  \label{fig:k2}
\end{subfigure}%
\begin{subfigure}{.5\textwidth}
  \centering
  \includegraphics[width=\linewidth]{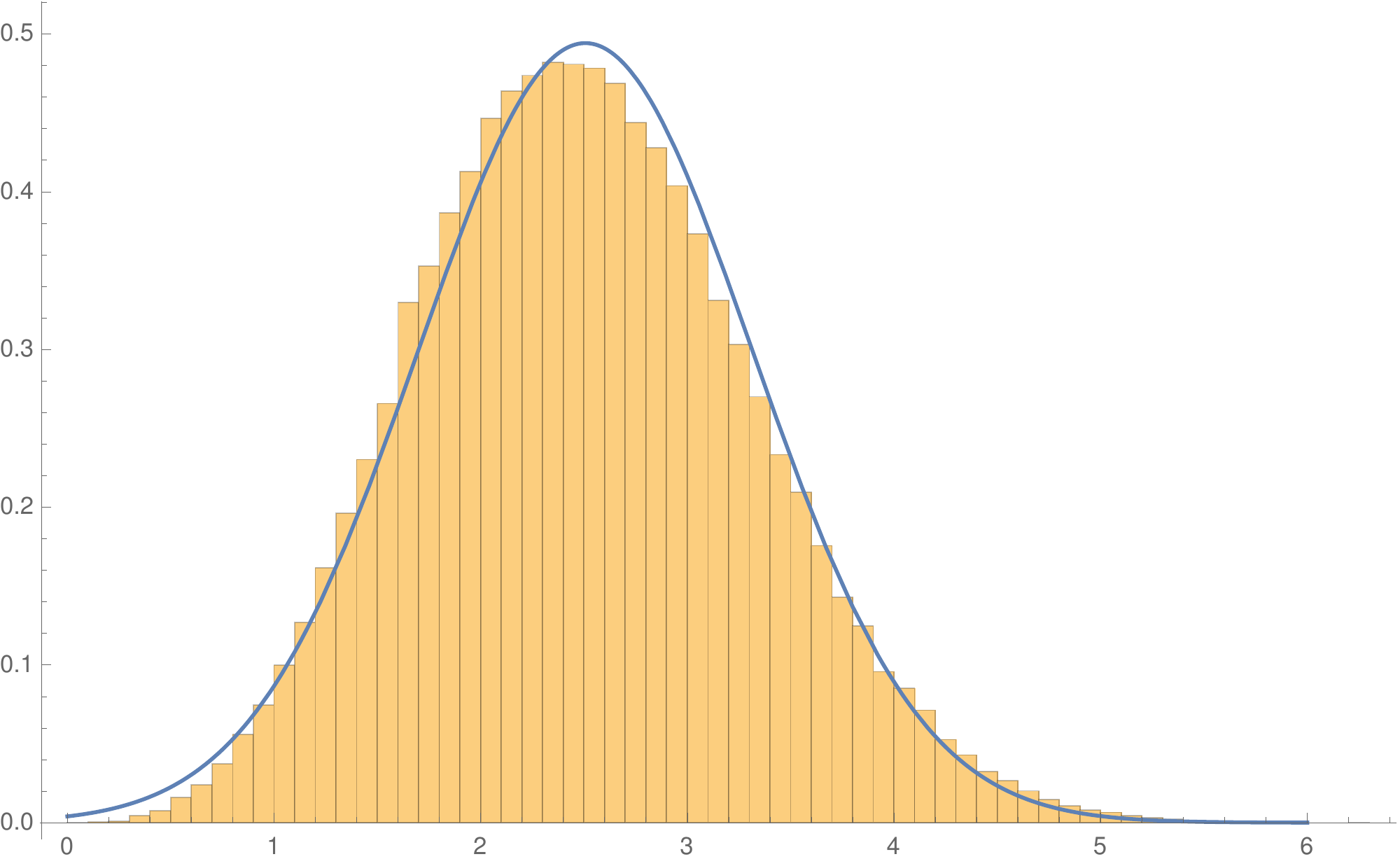}
  \caption{\(k=3\)}
  \label{fig:k3}
\end{subfigure}
\\
\begin{subfigure}{.5\textwidth}
  \centering
  \includegraphics[width=\linewidth]{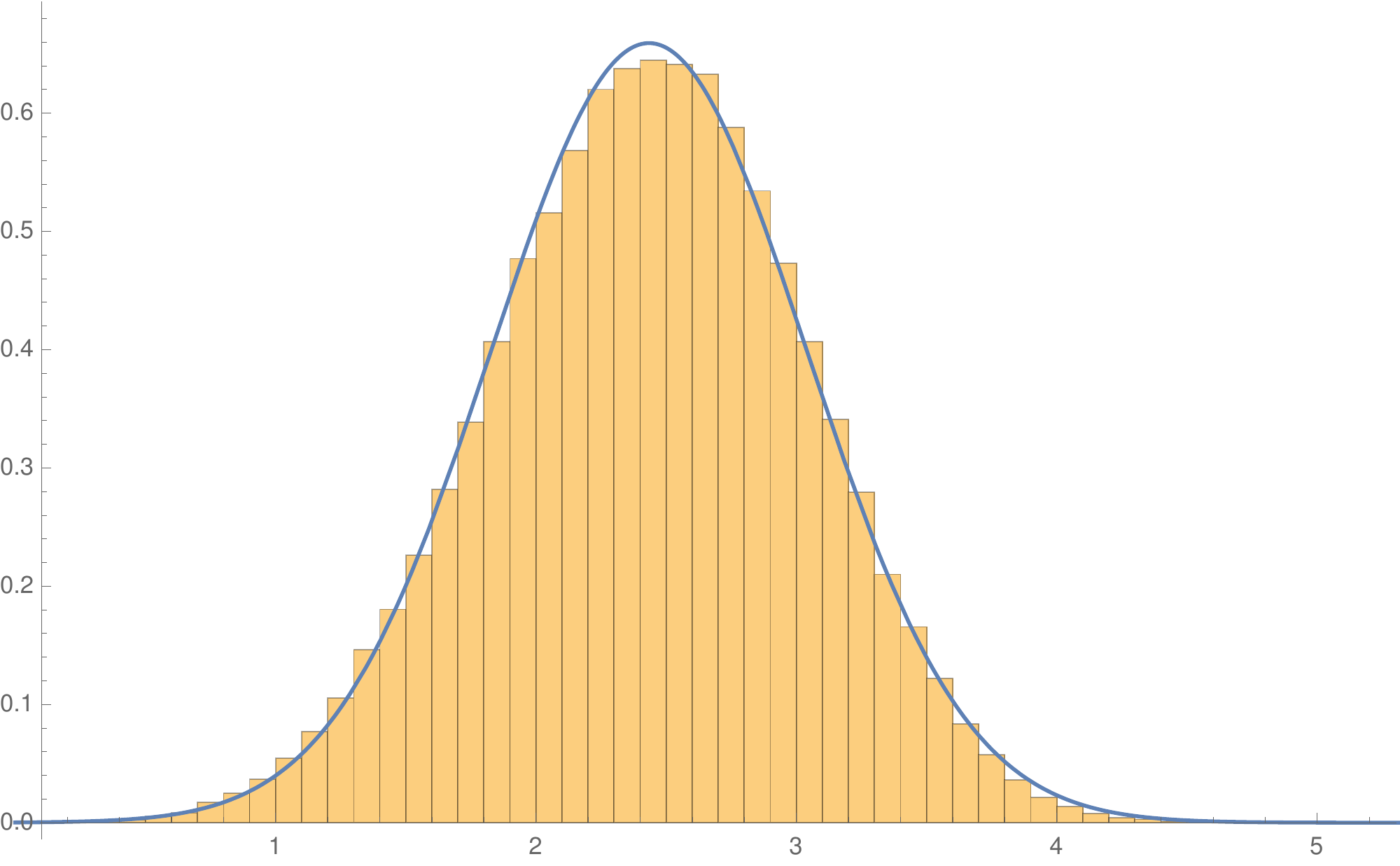}
  \caption{\(k=4\)}
  \label{fig:k4}
\end{subfigure}%
\begin{subfigure}{.5\textwidth}
  \centering
  \includegraphics[width=\linewidth]{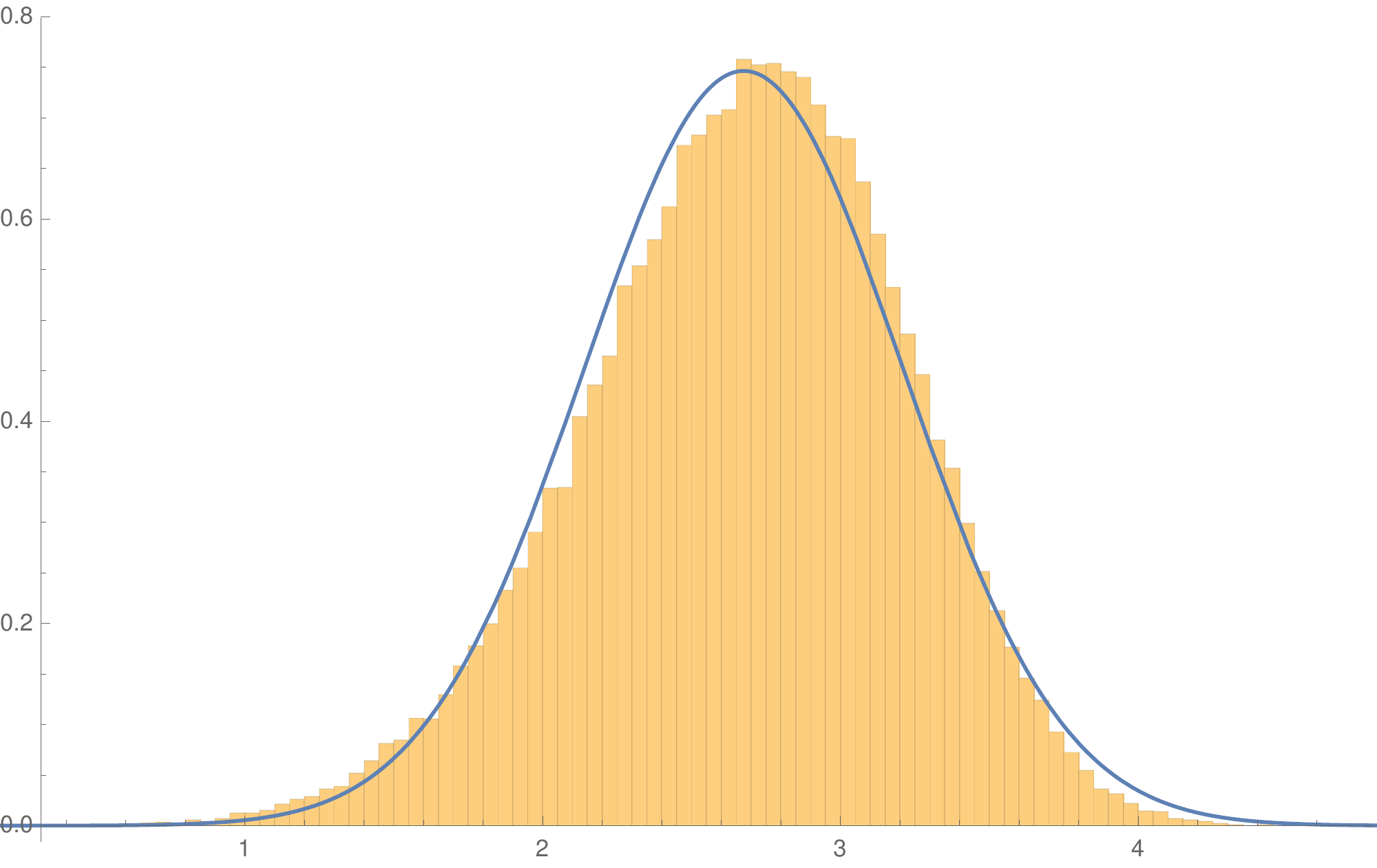}
  \caption{\(k=5\)}
  \label{fig:k5}
\end{subfigure}
\caption[Simulations for \(\cB_{k}\)]{Histograms of \(10^5\) samples of \(\cB_{k}\) for
    \(k=2,\dots,5\). The blue curves represent the density functions of \(\cN(\e \cB_{k},
        \sqrt{\V{\cB_{k}}})\).}
\label{fig:Bk}
\end{figure}

\begin{figure}[ht]
    \centering
  \centering
  \includegraphics[width=\linewidth]{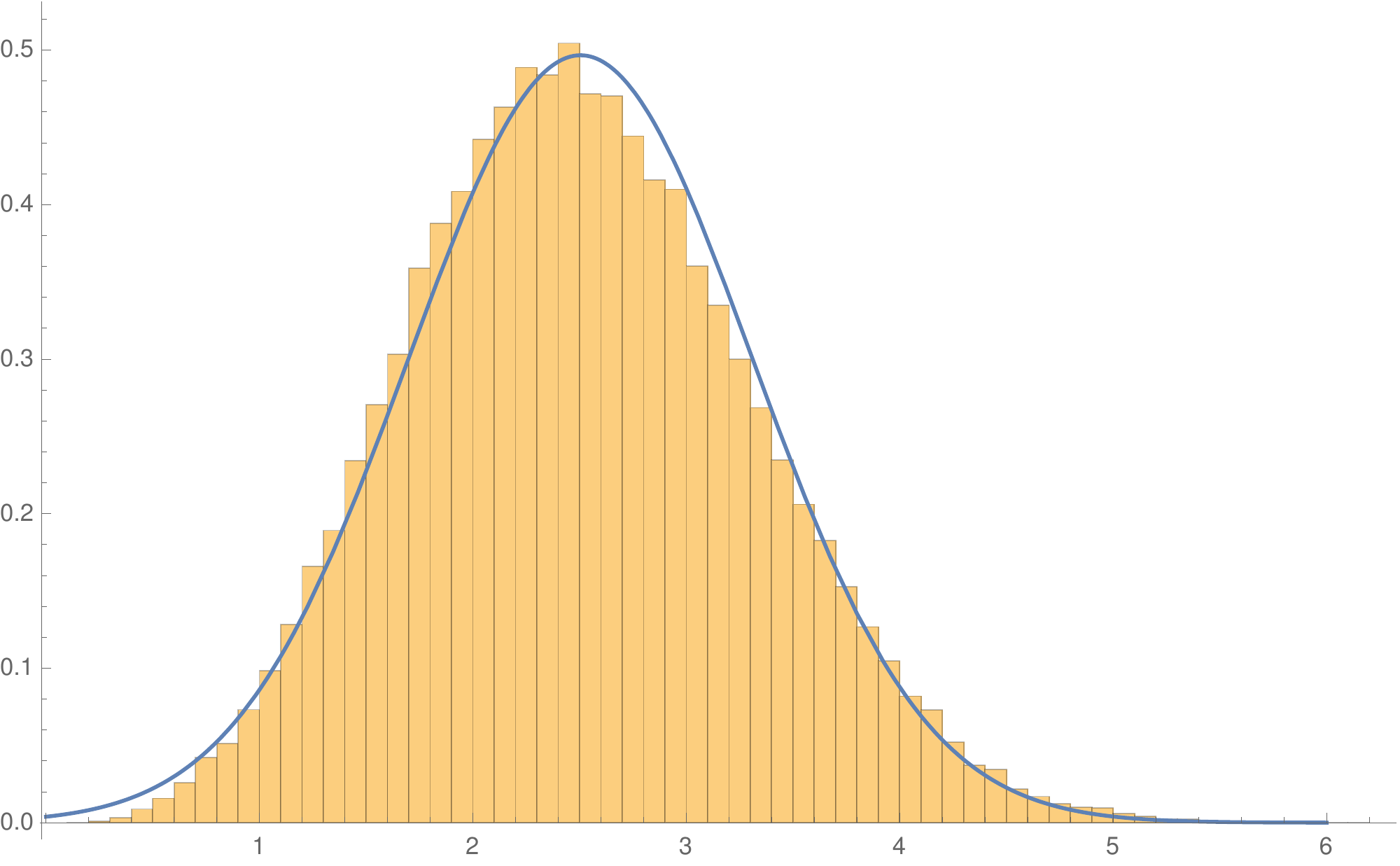}
  \caption[Simulation for \(\recNumN{2}\)]{Simulation for \(\recNumN\) with \(k=2\),
      \(n=2^{17}\) and \(60000\) samples, after rescaled by \(\sqrt{n}\). The blue curve represents the
      density function of a normal distribution with the empirical mean and variance.}
  \label{fig:k2-sim}
\end{figure}

\section{Some extensions}\label{sec:tree}

\subsection{A lower bound and an upper bound for general graphs}

Let \(\cG_{n}\) be the set of rooted graphs with \(n\) nodes.  It is obvious that
\(\Pathn\) is the easiest to cut among all graphs in \(\cG_{n}\).  We formalize this by the
following lemma:

\begin{lemma}
    \label{lem:universal}
    Let \(k \in \N\).
    For all \(\Graphn \in \cG_{n}\),
    \(
        \cutNumN \eqd \kcut(\Pathn) \preceq \kcut(\Graphn).
    \)
    Therefore,
    \begin{equation}
        \min_{\Graphn \in \cG_{n}} \e{\kcut(\Graphn)} \ge \e{\cutNumN} \sim
        \left\{
        \begin{array}{*2{>{\displaystyle}l}}
        \frac{(k!)^{\frac{1}{k}}}{k-1} \Gamma\left( \frac{1}{k} \right)
        n^{1-\frac{1}{k}}
        \qquad
        \qquad
        & 
        (k \ge 2),
        \\
        \log n
        &
        (k=1)
        ,
        \end{array}
        \right.
        \label{eq:P:exp}
    \end{equation}
    by Theorem \ref{thm:rec:mean}.
\end{lemma}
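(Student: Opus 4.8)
The plan is to establish the stochastic domination $\cutNumN = \kcut(\Pathn) \preceq \kcut(\Graphn)$ and then read off \eqref{eq:P:exp} by taking expectations and invoking \autoref{thm:rec:mean} with $r=1$ (together with the classical estimate $\E{\cR_n}\sim\log n$ for the borderline case $k=1$). Since stochastic domination is transitive, I would split the comparison into two coupling steps: first $\kcut(\Pathn)\preceq\kcut(\Treen)$ for an arbitrary rooted tree $\Treen$ on $n$ nodes, and then $\kcut(\Treen')\preceq\kcut(\Graphn)$, where $\Treen'$ is a spanning tree of $\Graphn$ rooted at the same vertex.

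For the tree step I would use the record description from the introduction: by \eqref{eq:rec:def}, $\kcut(\Treen)=\sum_{r=1}^{k}\sum_{v}\recordrv$, where $\recordrv$ is the indicator that $\gclockrv$ is smaller than every clock $\gclockRJ{k}{u}$ over the proper ancestors $u$ of $v$, and the per-node clock sequences are \iid{} across vertices. List the vertices of $\Treen$ as $v_1,\dots,v_n$ so that every vertex follows all of its ancestors (for instance, ordered by depth), so in particular $v_1$ is the root, and couple by giving $v_j$ of $\Treen$ \emph{and} the node at depth $j-1$ of $\Pathn$ the same \iid{} clock sequence; this is a valid coupling since it is just a relabelling of independent sequences. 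Since the proper ancestors of $v_j$ in $\Treen$ form a subset of $\{v_1,\dots,v_{j-1}\}$, whereas the proper ancestors of the $j$-th node of $\Pathn$ are exactly $\{v_1,\dots,v_{j-1}\}$, the relevant minimum over ancestor clocks is at least as large in $\Treen$; hence the indicator $\recordrv$ computed in $\Treen$ is at least the one computed in $\Pathn$, for every $j$ and $r$. Summing over $j$ and $r$ gives $\kcut(\Pathn)\le\kcut(\Treen)$ pointwise under this coupling, hence the first domination.

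For the graph step I would run the alarm-clock version of the cutting process on $\Graphn$ and on $\Treen'$ with a common family of clocks and prove monotonicity in the edge set: if $H\subseteq H'$ share the vertex set and root, then almost surely the removal time of each vertex under $H$ is at most its removal time under $H'$. The point is that, apart from accumulating $k$ of its own cuts (an event determined by that vertex's clock alone), the only way a vertex leaves the root component is by being disconnected from the root, and adding edges can only postpone disconnection; a short induction on the (a.s.\ distinct) removal times makes this precise, and this is the step I expect to require the most care. It follows that each vertex is cut at least as many times in $\Graphn$ as in $\Treen'$, so $\kcut(\Treen')\le\kcut(\Graphn)$ pointwise. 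Chaining the two steps yields $\cutNumN\preceq\kcut(\Graphn)$ for every $\Graphn\in\cG_n$; taking expectations and applying \autoref{thm:rec:mean} with $r=1$ — so that $\E{\cutNumN}=\E{\recNumN}\sim\E{\recNumNOne}\sim\frac{(k!)^{1/k}}{k-1}\Gamma\!\left(\frac1k\right)n^{1-\frac1k}$ for $k\ge2$, and $\E{\cutNumN}\sim\log n$ for $k=1$ — gives \eqref{eq:P:exp}.
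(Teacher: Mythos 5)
The paper never actually proves this lemma: the domination \(\kcut(\Pathn)\preceq\kcut(\Graphn)\) is asserted as obvious, and \eqref{eq:P:exp} is then read off from Theorem~\ref{thm:rec:mean} exactly as you do (with \(\eta_{k,1}=\frac{(k!)^{1/k}}{k-1}\Gamma(\frac{1}{k})\) and \(\E{\cR_{n}}\sim\log n\) for \(k=1\)), so your proposal supplies an argument where the paper supplies none. Your first step (path versus an arbitrary rooted tree) is correct and complete: listing the tree's vertices so that ancestors precede descendants, giving the \(j\)-th tree vertex and the depth-\((j-1)\) path vertex the same clock sequence, and noting that the tree ancestors of \(v_j\) form a subset of \(\{v_1,\dots,v_{j-1}\}\) makes each record indicator in the tree at least the corresponding one on the path; summing gives \(\kcut(\Pathn)\le\kcut(\Treen)\) pointwise.

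Your second step has the right idea but is misstated, and its one-line justification hides the real difficulty. If ``removal time'' means the time a vertex accumulates \(k\) counted cuts, the inequality goes the other way: in the larger graph a vertex is cut at least as fast, so it is deleted no \emph{later} there (its deletion time in \(H'\) is at most that in \(H\)). What you actually need, and what your next sentence suggests you mean, is monotonicity of root-component membership: at every time, the root component of \(H\) among its alive vertices is contained in that of \(H'\). And ``adding edges can only postpone disconnection'' does not prove this by itself, because in \(H'\) vertices are deleted \emph{earlier}, and earlier deletions could in principle disconnect more. The induction over the a.s.\ distinct ring times must rule out precisely this: if \(t\) were the first time some \(v\) lies in the root component of \(H\) but not of \(H'\), the \(H\)-alive root--\(v\) path also exists in \(H'\), so some vertex \(u\) on it must already be deleted in \(H'\), hence \(u\) rang at least \(k\) times before \(t\); but root components only shrink, so \(u\) was in the root component of \(H\) at all earlier times, those same rings were counted in \(H\), and \(u\) would be deleted in \(H\) as well --- a contradiction. (The root itself is deleted at its \(k\)-th ring in both graphs, so the stopping times coincide.) With this lemma in place every counted cut in the spanning tree is a counted cut in \(\Graphn\), your chain \(\kcut(\Pathn)\preceq\kcut(\Treen')\preceq\kcut(\Graphn)\) follows by transitivity, and the asymptotics \eqref{eq:P:exp} follow from Theorem~\ref{thm:rec:mean} as you state.
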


The most resilient graph is obviously \(\Kn\), the complete graph with \(n\) vertices. 
Thus, we have the following upper bound:
\begin{lemma}
    \label{lem:universal:up}
    Let \(k \in \N\).
    \begin{enumerate}[(i)]
        \item 
    Let \(Y \eql \Gam(k)\), \(Z \eql \Poi(Y)\), and \(W \eql Z \wedge k\), i.e., \(W \eql \min\{Z,
        k\}\).  Then
    \begin{equation}
        \cL
        \left( 
        \frac{
            \kcut(\Kn)
        }{
            n
        }
        \right)
        \inlaw
        \cL
        \left( 
        \E{W|Y}
        \right)
        =
        \cL
        \left( 
        \frac{\Gamma (k+1,Y)-e^{-Y} Y^{k+1}}{k!}+k
        \right)
        ,
        \label{eq:star:dist}
    \end{equation}
    where \(\Gamma(\ell,z)\) denotes the upper incomplete gamma function.
    Note that when \(k=1\), the right-hand-side is simply \(\Unif[0,1]\).
    \item
    For all \(\Graphn \in \cG_{n}\),
    \(
        \kcut(\Graphn) \preceq \kcut(\Kn).
    \)
    Therefore,
    \begin{equation}
        \max_{\Graphn \in \cG_{n}}
        \e{\kcut(\Graphn)} 
        \le
        \e{\kcut(\Kn)}
        \sim
        k
        \left(
        1-\frac{1}{2^{2 k}} \binom{2 k}{k}
        \right)
        n
        .
        \label{eq:star}
    \end{equation}
    \end{enumerate}
\end{lemma}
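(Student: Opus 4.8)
The plan is to run the $k$-cut process on an arbitrary rooted graph through the same alarm-clock representation already used for trees, and then to observe that on $\Kn$ this representation trivialises. Attach to each node $v$ a rate-$1$ Poisson clock with ring times $\gclockRJ{1}{v} < \gclockRJ{2}{v} < \cdots$, and call a ring of $v$ \emph{effective} if at that instant $v$ still lies in the component of the root and has so far had fewer than $k$ effective rings; a node is deleted after its $k$-th effective ring, and the process stops at the first effective ring of the root, so that $\kcut(\Graphn)$ equals the number of effective rings. Exactly as for trees, but now using that the root-component is nonincreasing in time, so that at each stopping time the residual inter-ring times of the active nodes are \iid{} $\Exp(1)$, this reproduces the $k$-cut process. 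The preliminary observation I would record is that the root is active throughout, hence \emph{whatever the graph} the process stops at the time $Y \eqd \gclockRJ{k}{1} \eql \Gam(k)$ (writing $1$ for the root), and the root always contributes exactly $k$ cuts.

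For (i) I would specialise to $\Kn$, where deleting nodes never disconnects the remainder; thus for $v \ne 1$ a ring of $v$ is effective exactly when $v$ has had fewer than $k$ rings and the process is still running, so the number of cuts at $v$ equals $N_v \wedge k$, where $N_v$ is the number of rings of $v$'s clock before time $Y$. Conditionally on $Y$ the variables $(N_v)_{v \ne 1}$ are \iid{} $\Poi(Y)$, so $\kcut(\Kn) = k + \sum_{v=2}^{n}(N_v \wedge k)$. Conditioning on $Y$ and applying the strong law of large numbers to the \iid{} summands yields $\kcut(\Kn)/n \to \E{W\mid Y}$ with $W \eql \Poi(Y)\wedge k$; since $Y$ has a fixed distribution, averaging this conditional limit over $Y$ gives the asserted convergence in distribution (which in fact holds almost surely in the natural coupling). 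The closed form for $\E{W\mid Y}$ then follows from $\E{W\mid Y}=\sum_{j=1}^{k}\p{\Poi(Y)\ge j \mid Y}$ and standard incomplete-gamma bookkeeping; for $k=1$ it is $1-\erm^{-Y}$, which is $\Unif[0,1]$ since $Y \eql \Exp(1)$.

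For (ii) I would couple every $\Graphn\in\cG_n$, together with $\Kn$, to one realisation of the clocks. By the preliminary observation all of them stop at the same time $Y$, so in any graph the number of effective rings at a node $v\ne 1$ is at most $N_v\wedge k$, where $N_v$ is the number of rings of $v$ in $[0,Y)$: at most $k$ because the node is removed after $k$ effective rings, and at most $N_v$ because effective rings precede the stop. On $\Kn$ this bound is attained for every $v$, as shown in (i), and the root contributes $k$ in every graph; summing over nodes gives $\kcut(\Graphn)\le\kcut(\Kn)$ pointwise in this coupling, hence $\kcut(\Graphn)\preceq\kcut(\Kn)$ and $\max_{\Graphn\in\cG_n}\e{\kcut(\Graphn)}\le\e{\kcut(\Kn)}$. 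Finally $\e{\kcut(\Kn)}=k+(n-1)\,\E{\Poi(Y)\wedge k}$ with $Y\eql\Gam(k)$; writing $\E{\Poi(Y)\wedge k}=\sum_{j=1}^{k}\p{\Gam(j)\le Y}$ for an independent $\Gam(j)$-variable and using the beta-function symmetry $\p{\Gam(j)\le\Gam(k)}=\p{\mathrm{Beta}(j,k)\le\tfrac12}$ collapses the sum to $k\bigl(1-2^{-2k}\binom{2k}{k}\bigr)$, a computation I would place among the analytic identities of \autoref{sec:int}.

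The coupling and the pointwise domination are short once the Poisson picture is in place, so the main obstacles are (a) checking carefully that the alarm-clock representation is faithful for non-tree graphs — the crucial input being that the root-component only shrinks, together with the usual measure-zero care at the single stopping instant $Y$ — and (b) the analytic evaluations: the closed form for $\E{W\mid Y}$ and, above all, reducing $\E{\Poi(Y)\wedge k}$ to the central binomial coefficient. Part (b) is elementary but fiddly, which is why it belongs in \autoref{sec:int}.
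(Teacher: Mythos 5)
Your proof is correct and follows essentially the same route as the paper: both reduce \(\kcut(\Kn)\) (the paper via the equivalent star \(S_{n}\), you via the alarm-clock representation on \(\Kn\) directly) to \(k\) plus \(n-1\) conditionally \iid{} \(\Poi(Y)\wedge k\) cut counts given the root-removal time \(Y \eql \Gam(k)\), and conclude with the conditional law of large numbers. The only difference is that you additionally spell out the clock coupling behind \(\kcut(\Graphn)\preceq\kcut(\Kn)\) and the Beta-symmetry evaluation of \(\E{\Poi(Y)\wedge k}\), steps the paper's proof treats as immediate.
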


\begin{proof}
    Let \(S_{n}\) be the tree of \(n\) nodes with one root and \(n-1\) leaves. Obviously
    \(\kcut(\Kn) \eql \kcut(S_{n})\).
    Let \(Y\) be the time when the root is removed.
    Let \(W_{1,n},\dots,W_{n-1,n}\) be the number of cuts each leaf receives by this time.
    Conditioning on the event \(Y=y\), \(W_{1,n},\dots,W_{n-1,n}\) are \iid with \(W_{i,n} \eql
        Z_{i}\wedge k\), where \(Z_{i}\eql \Poi(y)\). In other words,
    conditioning on \(Y=y\), by the law of large numbers,
    \begin{equation}
        \frac{
            \kcut(S_{n})
        }{
            n
        }
        =
        \frac{
            k + \sum_{i=1}^{n-1} W_{i,n}
        }{
            n
        }
        \inas
        \E{
            Z_{1}
        }
        ,
        \label{eq:star:as}
    \end{equation}
    from which \eqref{eq:star:dist} and \eqref{eq:star} follow immediately.
\end{proof}

\subsection{Path-like graphs}

If a graph \(\Graphn\) consists of only long paths, then the limit distribution \(\kcut(\Graphn)\)
should be related to \(\cB_{k}\), the limit distribution of \(\kcut(\Pathn)/n^{1-\frac{1}{k}}\)
(see \autoref{thm:rec:limit}).  We give two simple examples with \(k \in \{2,3,\dots\}\).

\begin{example}[Long path]
    Let \(\mySeq{\Graphn}{n \ge 1}\) be a sequence of rooted graphs such that \(\Graphn\) contains a
    path of length \(m(n)\) starting from the root with
    \(n-m(n) = o(n^{1-\frac{1}{k}})\). Since it takes at most \(k (n-m(n))\) cuts to remove all the
    nodes outside the long path, 
    \[
        \kcut(P_{m(n)}) \preceq \kcut(\Graphn) \preceq \kcut(P_{m(n)}) + k \smallo{n^{1-1/k}}.
    \]
    Thus, by \autoref{lem:universal}, this implies that
    \(\kcut(\Graphn)/n^{1-\frac{1}{k}}\) converges in distribution to \(\cB_{k}\).
\end{example}

\newcommand{\treeCu}{\mathbb T^{(\ell)}_{n}}
\begin{example}[Curtain]
    Let \(\ell \ge 2\) be a fixed integer. Let \(\treeCu\) be a graph that consists of only \(\ell\) paths
    connected to the root, with the first \(\ell-1\) of them having length \(\ceil*{\frac{n-1}{\ell}}\).
    We call \(\treeCu\) an \(\ell\)-curtain. It is easy to see that cutting \(\treeCu\) is very similar to
    cutting \(\ell\) separated paths of length \(\ceil*{\frac{n}{\ell}}\). Therefore, we can show
    that
    \begin{equation}
        \frac{\kcut(\treeCu)}{\left( n/\ell \right)^{\frac{1}{k}}}
        \inlaw
        \sum_{j=1}^{\ell}
        \cB_{k}^{[j]},
        \label{eq:curtain}
    \end{equation}
    where \(\cB_{k}^{[1]},\dots,\cB_{k}^{[\ell]}\) are \iid copies of \(\cB_{k}\).
\end{example}

\subsection{Deterministic and random trees}\label{sec:logtrees}

The approximation given in \autoref{lem:mean:i} can be used to compute the expectation of \(k\)-cut
numbers in many deterministic or random trees. We give four examples: complete binary trees, split trees,
random recursive trees, and Galton-Watson trees.

\newcommand{\treeBi}{\mathbb{T}^{\mathrm{bi}}_{n}}

\subsubsection{Complete binary trees}\label{exam:binary}

Let \(\treeBi\) be a complete binary tree of with \(n = 2^{m+1}-1\) nodes, i.e., its height is
\(m\).  Recall that $I_{r,i+1}$ in \autoref{lem:mean:i} is the indicator that a node in \(\Pathn\)
at depth $i$ is an $r$-record.  Since the probability of a node being an \(r\)-record only depends
on its depth, it follows from \autoref{lem:mean:i} that
\begin{equation}
    \label{eq:bin}
    \begin{aligned}
    \e{\kcut_{r}(\treeBi)}
    =
    \sum_{i=0}^{m} 2^{i} \e{I_{r,i+1}}
    \sim
    \frac{(k!)^{\frac{r}{k}} }{k}
    \frac{\Gamma\left( \frac{r}{k} \right)}{\Gamma(r)}
    \frac{2^{m + 1}}{m^{\frac{r}{k}}}
    .
    \end{aligned}
\end{equation}
Thus, only the one-records matter as in the case of \(\Pathn\) and
\begin{equation}
    \e{\kcut(\treeBi)}
    \sim
    \e{\kcut_{1}(\treeBi)}
    \sim
    \frac{(k!)^{\frac{1}{k}}{\Gamma\left( \frac{1}{k} \right)} }{k}
    \frac{2^{m+1}}{m^{\frac{1}{k}}}
    \sim
    \frac{(k!)^{\frac{1}{k}}{\Gamma\left( \frac{1}{k} \right)} }{k}
    \frac{n}{\left(\log_{2}n\right)^{\frac{1}{k}}}
    .
    \label{eq:bin:1}
\end{equation}
The limit distribution of \(\kcut(\treeBi)\) has been found in our follow-up paper
\cite{2018arXiv181105673S}.

\subsubsection{Split trees}\label{sec:split}

\newcommand{\treeSp}{\mathbb T^{\mathrm{sp}}_{n}}
\newcommand{\badSp}{\mathbb B^{\mathrm{sp}}_{n}}
\newcommand{\goodSp}{\mathbb X^{\mathrm{sp}}_{n}}

Split trees were first defined by \citet{MR1634354} to encompass many families of trees that are
frequently used in algorithm analysis, e.g., binary search trees and tries.  Its exact construction
is somewhat lengthy and we refer readers to either the original algorithmic definition in
\cite{MR2878784} or the more probabilistic version in \cite[Section 2]{Cai014}.  

Very roughly speaking, $\treeSp$ is constructed by first distributing randomly \(n\) balls among the
nodes of an infinite \(b\)-ary tree and then removing all subtrees without balls.  Each node
in the infinite \(b\)-ary tree is given a random non-negative split vector
\(\cV=(V_{1},\dots,V_{b})\), satisfying \(\sum_{i=1}^b V_i = 1\), drawn independently from the same
distribution. These vectors affect how balls are distributed.  

In the study of split trees, the following condition of \(\cV\) is often assumed:
\begin{myCond} \label{cond:split}
    The split vector \(\cV\) is permutation invariant. Moreover, \(\p{V_{1} = 1} = 0\), \(\p{V_{1}=0} = 0\), and that \(-\log(V_{1})\) is non-lattice.
\end{myCond}
\noindent
\citet[Theorem~1.1]{MR2878784} showed that 
, assuming condition \ref{cond:split},
there exists a constant \(\alpha\) such that \(
\e N \sim \alpha n\), where \(N\) is the random number of nodes in \(\treeSp\).

In the setup of split trees (and other random trees), we obtain \(\kcut(\treeSp)\) by choosing a
random split tree first and then carry out the \(k\)-cut process conditioning on the tree.
\citet[Theorem~1.1]{holmgren11} showed that condition \ref{cond:split} implies that
\(\kcut_{k}(\treeSp)\) converges to a weakly \(1\)-stable distribution after normalization, and that
\( \e \kcut_{k}(\treeSp) \sim {\mu \alpha n}/{\log n} \),
where \(\mu \eqd b \E{-V_{1} \log V_{1}}\).
We extend this result as follows:
\begin{lemma}
    \label{thm:split:mean}
    Assuming condition \ref{cond:split}, we have
    \begin{align}
        &
        \E{\kcut_r(\treeSp)}  
        \sim
        \frac{(k!\mu )^{\frac{r}{k}}}{k} \frac{\Gamma\left( \frac{r}{k}
            \right)}{\Gamma(r)}\frac{\alpha n}{ (\log n)^\frac{r}{k}}
        ,
        & 
        (1 \le r \le k),
        \label{eq:split:mean:k}
        \\
        &
        \E{\kcut(\treeSp)}  
        \sim
        (k!\mu )^{\frac{1}{k}} \frac{\Gamma\left( \frac{1}{k}
            \right)}{k}\frac{\alpha n}{ (\log n)^\frac{1}{k}}
        \label{eq:split:mean}
        .
    \end{align}
\end{lemma}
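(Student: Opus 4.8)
The plan is to reduce \(\E{\kcut_{r}(\treeSp)}\) to a sum against the expected depth profile of \(\treeSp\) and then use the concentration of that profile around depth \(\mu^{-1}\log n\); this is the same mechanism as in the complete-binary-tree computation \eqref{eq:bin}, except that the role of the bottom level there is now played by the bulk of the profile. Since the alarm clocks are generated independently of the random tree \(\treeSp\), and since \(\recordrv\) depends only on the clocks of \(v\) and of its ancestors, conditioning on \(\treeSp\) gives \(\E{\recordrv \mid \treeSp} = \E{\recordRJ{r}{\mathrm{depth}(v)+1}}\): a node at depth \(d\) has exactly \(d\) ancestors, so this is the very quantity estimated in \autoref{lem:mean:i}, and it equals \(1\) for the root. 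Writing \(N_{n,d}\) for the number of nodes of \(\treeSp\) at depth \(d\), I would then have
\[ \E{\kcut_{r}(\treeSp)} = \sum_{d \ge 0} \E{N_{n,d}} \, \E{\recordRJ{r}{d+1}}. \]

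The two external inputs, both valid under Condition~\ref{cond:split}, are: first, \(\E{N} \sim \alpha n\) \cite[Theorem~1.1]{MR2878784}; and second, concentration of the expected profile, namely that for every \(\varepsilon > 0\) one has \(\sum_{d \,:\, |d - \mu^{-1}\log n| > \varepsilon \mu^{-1}\log n} \E{N_{n,d}} = o(n)\). The second statement underlies the computation in \cite{holmgren11} of \(\E{\kcut_{k}(\treeSp)}\) (the case \(r=k\) here, since there \(\E{\recordRJ{k}{d+1}} = 1/(d+1)\)), and it also follows from the known first-order behaviour of the depth of a uniformly chosen node of \(\treeSp\). I expect establishing it with enough uniformity to be the main obstacle; I would either cite it directly or reproduce the argument of \cite{holmgren11}.

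Granting these, what remains is bookkeeping. Fix \(r \in \{1,\dots,k\}\) and \(\varepsilon > 0\), choose a small \(\delta > 0\) with \(\delta < \min\{1/\log b,\, (1-\varepsilon)/\mu\}\), and split the sum into \(d \in J_{n} \eqd \{d : |d - \mu^{-1}\log n| \le \varepsilon \mu^{-1}\log n\}\), the low range \(0 \le d < \delta \log n\), and the remaining range. On \(J_{n}\), \autoref{lem:mean:i} gives \(\E{\recordRJ{r}{d+1}} = (1+o(1)) \, \tfrac{(k!)^{r/k}}{k}\tfrac{\Gamma(r/k)}{\Gamma(r)} \, d^{-r/k}\) uniformly, while \(d^{-r/k} = (1+O(\varepsilon)) (\mu^{-1}\log n)^{-r/k}\) there; together with \(\sum_{d \in J_{n}} \E{N_{n,d}} = \E{N} - o(n) \sim \alpha n\) this gives a contribution \((1+O(\varepsilon)+o(1)) \, \tfrac{(k!\mu)^{r/k}}{k}\tfrac{\Gamma(r/k)}{\Gamma(r)} \, \alpha n / (\log n)^{r/k}\). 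The low range contributes at most \(\sum_{d < \delta \log n} \E{N_{n,d}} \le \sum_{d < \delta \log n} b^{d} = O(n^{\delta \log b}) = o(n/(\log n)^{r/k})\), using \(N_{n,d} \le b^{d}\) and \(\E{\recordRJ{r}{d+1}} \le 1\). The remaining range contributes at most \(C (\delta \log n)^{-r/k} \cdot o(n) = o(n/(\log n)^{r/k})\), using the uniform bound \(\E{\recordRJ{r}{d+1}} \le C d^{-r/k}\) from \autoref{lem:mean:i} together with the profile concentration. Letting \(\varepsilon \downarrow 0\) after \(n \to \infty\) yields \eqref{eq:split:mean:k}. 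Finally \(\kcut(\treeSp) = \sum_{r=1}^{k} \kcut_{r}(\treeSp)\), and by \eqref{eq:split:mean:k} the \(r = 1\) term has order \(n/(\log n)^{1/k}\) while the terms \(r \ge 2\) have strictly smaller order, so \(\E{\kcut(\treeSp)} \sim \E{\kcut_{1}(\treeSp)}\), which is \eqref{eq:split:mean}.
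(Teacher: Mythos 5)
Your proposal is correct and follows essentially the same route as the paper: condition on the tree, note that the $r$-record probability depends only on depth via \autoref{lem:mean:i}, and kill the nodes whose depth is far from $\mu^{-1}\log n$ using profile concentration together with $\E N\sim\alpha n$. The concentration input you flag as the main obstacle is exactly what the paper cites (Theorem~1.2 of \cite{MR2878784}), which gives only $\bigO{n/(\log n)^{3}}$ nodes outside a window of width $\log^{0.6}n$ and thereby also dispenses with your separate low-depth range argument.
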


\begin{proof}
We say a node $v$ is \emph{good} if it has depth \(d(v)\) where
\(
    \left|d(v)-\frac{1}{\mu}\log n \right| \leq \log^{0.6} n,
\)
otherwise we say it is \emph{bad}.
Let \(\badSp\) be the number of bad nodes in \(\treeSp\).
By \cite{MR2878784}[Theorem~1.2],
\(
    \e{\badSp} = 
    \bigO{
        {n}/{(\log n)^3}
    }
    .
\)
Thus, the number of \(r\)-records in bad nodes is negligible and it suffices to prove the lemma for
good nodes.  By Lemma~\ref{lem:mean:i} and the definition of good nodes, we have
\begin{align*}
    \E{\kcut(\treeSp)|\treeSp} 
    =
    & 
    (N-\badSp)  \frac{(k!)^{\frac{r}{k}}}{k} \frac{\Gamma\left( \frac{r}{k} \right)}{\Gamma(r)}
    \left[ \frac{\log n}{\mu}+O(\log^{0.6}n)\right]^{-\frac{r}{k}}(1+O(\log^{-\frac{1}{2k}} n))\\ 
    =
    & 
    (N-\badSp)
    \frac{(k!\mu)^{\frac{r}{k}}}{k} \frac{\Gamma\left( \frac{r}{k}
        \right)}{\Gamma(r)}\frac{1}{\left(\log n\right)^{\frac{r}{k}}}(1+O(\log^{-\frac{1}{2k}} n)),
\end{align*}
from which the lemma follows by taking expectation and using that \(\e N\sim\alpha n\).
\end{proof}

\subsubsection{Random recursive trees}

\newcommand{\treeRR}{\mathbb{T}^{\mathrm{rr}}}
\newcommand{\treeRRn}{\treeRR_{n}}

A random recursive tree \(\treeRRn\) is random tree of \(n\) nodes constructed recursively as
follows: let \(\treeRR_{1}\) be the tree of a single node labeled \(1\); given \(\treeRR_{n-1}\),
choose a node in \(\treeRR_{n-1}\) uniformly at random and attach a node labeled \(n\) to the selected node as a child, which
gives \(\treeRRn\).  \textcite{meir74} introduced this model and showed that \(\e \kcut_{k}(\treeRRn)\sim
n /\log n\) and that \(\kcut_{k}(\treeRRn)\) concentrates around its mean. \textcite{drmota09} and
subsequently \textcite{iksanov07} proved \(\kcut(\treeRRn)\) converges weakly to a stable law after
proper shifting and normalization.

The intuition behind \(\e \kcut_{k}(\treeRRn)\sim n /\log n\) is simply that almost all nodes in
\(\treeRRn\) are at depth around \(\log n\).  We say a node \(v\) in \(\treeRRn\) is \emph{good} if
\(|d(v)-\log(n)|\le\log(n)^{0.9}\); otherwise we say it is \emph{bad}.  The following lemma shows
that there are very few bad nodes in expectation:
\newcommand{\badRR}{\mathbb B^{\mathrm{rr}}_{n}}
\begin{lemma}\label{lem:bad:rrt}
    Let \(\badRR\) be the number of bad nodes in \(\treeRRn\), then
    \(\e \badRR = \bigO{{n}/{\log(n)^{3}}}\).
\end{lemma}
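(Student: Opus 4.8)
The plan is to realise the depth of each node as a sum of independent indicators and then apply an exponential (Chernoff) tail bound, rather than a second-moment estimate.

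Write $D_j$ for the depth of node $j$ in $\treeRRn$. Since the vertices inserted after $j$ do not change its depth, $D_j$ has the same law as the depth of the last-inserted vertex of $\treeRR_{j}$, and it is well known that this depth satisfies
\[
    D_j \eql \sum_{i=1}^{j-1} B_i,
    \qquad B_i \eql \Ber\!\left(\tfrac{1}{i}\right) \text{ independent}.
\]
Hence $\e D_j = \sum_{i=1}^{j-1}\tfrac{1}{i} = \log j + \bigO{1}$ and $\V{D_j} = \sum_{i=1}^{j-1}\tfrac{1}{i}\bigl(1-\tfrac{1}{i}\bigr) \le \log n + \bigO{1}$.

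Next I would write $\e\badRR = \sum_{1 \le j \le n}\p{|D_j - \log n| > \log(n)^{0.9}}$ and split the sum at $j_0 \eqd \lceil n/\log(n)^{4}\rceil$. For $j \le j_0$ there are at most $j_0 = \smallo{n/\log(n)^{3}}$ terms, so they contribute a negligible amount no matter what the probabilities are. For $j_0 < j \le n$ we have $\e D_j = \log j + \bigO{1} = \log n + \bigO{\log\log n}$, so for $n$ large the event $\{|D_j - \log n| > \log(n)^{0.9}\}$ is contained in $\{|D_j - \e D_j| > \tfrac{1}{2}\log(n)^{0.9}\}$. At this point Chebyshev's inequality is too weak: it would only give $\bigO{\V{D_j}/\log(n)^{1.8}} = \bigO{\log(n)^{-0.8}}$ per term, i.e.\ $\bigO{n\log(n)^{-0.8}}$ in total. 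Instead I would use the standard Chernoff/Bernstein bound for a sum of independent Bernoulli variables (see, e.g., \cite{reed02}): with $\mu = \e D_j \le \log n + \bigO{1}$ and $t = \tfrac{1}{2}\log(n)^{0.9}$,
\[
    \p{|D_j - \e D_j| > t}
    \le 2\exp\!\left(-\frac{t^{2}}{2(\mu + t/3)}\right)
    \le 2\exp\!\left(-c\,\log(n)^{0.8}\right)
\]
for some constant $c > 0$ and all $n$ large. This is $\smallo{\log(n)^{-3}}$, so the range $j_0 < j \le n$ contributes $\smallo{n/\log(n)^{3}}$ to $\e\badRR$, and altogether $\e\badRR = \smallo{n/\log(n)^{3}} = \bigO{n/\log(n)^{3}}$.

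There is no genuine obstacle here. The one point worth stressing is that the deviation scale $\log(n)^{0.9}$ is much larger than the typical fluctuation $\sqrt{\log n}$ of $D_j$, so one lies in the large-deviation regime and an exponential tail bound is both available and necessary — a variance bound alone does not suffice. The only mild technicality, that the constant $\log n$ used to centre $D_j$ in the definition of a ``good'' node is not the true mean $\log j$ for nodes of small index, is handled by absorbing those few nodes into the $j \le j_0$ part of the split.
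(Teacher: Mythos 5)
Your proof is correct, but it takes a genuinely different route from the paper. The paper does not look at individual node depths at all: it cites the height tail bound for \(\treeRRn\) to discard depths above \(K\log n\), then invokes the known asymptotics for the expected profile \(\E{w_d(\treeRRn)}\) and sums that formula over the bad depth ranges \([1,\log n-\log(n)^{0.9}]\cup[\log n+\log(n)^{0.9},K\log n]\). You instead use the classical representation of the depth of node \(j\) as a sum of independent \(\Ber(1/i)\) indicators (equivalently, the independence of the ancestor events), discard the \(\bigO{n/\log(n)^4}\) nodes of small index by a trivial count, and apply a Chernoff--Bernstein bound to the remaining nodes; your observation that Chebyshev is insufficient at the deviation scale \(\log(n)^{0.9}\) is apt, and the substitution of the mean for the variance in the Bernstein denominator is harmless since each summand is Bernoulli. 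Your argument is more elementary and self-contained (no profile or height results from the literature are needed) and in fact yields \(\e\badRR=\bigO{n/\log(n)^{A}}\) for any fixed \(A\) by adjusting the cutoff, whereas the paper's approach is shorter given the cited results and works directly with the expected number of nodes per depth rather than per-node tail bounds.
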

\begin{proof}
Let \(h(\treeRRn)\) be the height of \(\treeRRn\). By \cite[6.3.2]{drmota2009}
\begin{equation}\label{eq:rrt:height}
    \p{|h(\treeRRn)-\ee \log(n)|> \eta}=\bigO{\ee^{-c \eta}}
    ,
\end{equation}
for some constant \(c\).
Thus, we can choose some constant \(K\) large enough and ignore the nodes of depth greater than
\(K \log(n)\).
Let \(w_{d}(\treeRRn)\) be the number of nodes at depth \(d\) in
\(\treeRRn\). By \cite[Equation 3]{MR2291961}
\begin{equation}\label{eq:rrt:profile}
    \E{w_{d}(\treeRRn)}
    =
    \frac{\log(n)^{d}}{\Gamma(1+d/\log(n))d!}\left( 1+ \bigO{\log(n)^{-1}} \right)
    ,
\end{equation}
uniformly for all \(n \ge 3\) and \(1 \le d \le K \log(n)\), for all \(K \ge 1\).
Thus, the lemma follows by summing both sides of \eqref{eq:rrt:profile} over integers \(d\) in \([1,
\log(n)-\log(n)^{0.9}] \cup [\log(n)+\log(n)^{0.9}, K \log(n)].\)
\end{proof}

Thus, by exactly the same argument of \autoref{thm:split:mean}, we get:
\begin{lemma}\label{lem:mean:rrt}
    We have
    \begin{align}
        &
        \E{\kcut_r(\treeRRn)}  
        \sim
        \frac{(k!)^{\frac{r}{k}}}{k} \frac{\Gamma\left( \frac{r}{k}
            \right)}{\Gamma(r)}\frac{n}{ (\log n)^\frac{r}{k}}
        ,
        & 
        (1 \le r \le k),
        \label{eq:rrt:mean:k}
        \\
        &
        \E{\kcut(\treeRRn)}  
        \sim
        (k!)^{\frac{1}{k}} \frac{\Gamma\left( \frac{1}{k}
            \right)}{k}\frac{n}{ (\log n)^\frac{1}{k}}
        \label{rrt:split:mean}
        .
    \end{align}
\end{lemma}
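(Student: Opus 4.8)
The plan is to imitate the proof of \autoref{thm:split:mean} essentially line for line, with the quantity $\E{N}\sim\alpha n$ there replaced by the trivial fact that $\treeRRn$ has exactly $n$ nodes, and with the depth profile controlled by \autoref{lem:bad:rrt}. The starting point is the observation that the record indicator $\recordrv$ in \eqref{eq:rec:def} depends only on the exponential clocks attached to $v$ and to the proper ancestors of $v$; hence, conditioning on the tree shape, the probability that a node $v$ of depth $d(v)$ is an $r$-record is precisely the same integral that defines $\E{I_{r,d(v)+1}}$ for $\Pathn$, whose asymptotics is given by \autoref{lem:mean:i}. Writing $w_d(\treeRRn)$ for the number of nodes at depth $d$, taking the conditional expectation over the cutting process and then averaging over the tree gives
\[
    \E{\kcut_r(\treeRRn)}
    =
    \sum_{d \ge 0} \E{w_d(\treeRRn)}\,\E{I_{r,d+1}}
    .
\]

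Next I would split this sum according to whether $d$ lies in the good window $|d-\log n|\le\log(n)^{0.9}$ or not. Since every node contributes at most $1$ to $\kcut_r(\treeRRn)$, the total contribution of the bad depths is at most $\E{\badRR}=\bigO{n/\log(n)^3}$ by \autoref{lem:bad:rrt}, and this is $\smallo{n/(\log n)^{r/k}}$ because $r/k\le 1$. For a good depth one has $d=(\log n)\bigl(1+\bigO{\log(n)^{-1/10}}\bigr)$, so \autoref{lem:mean:i}, combined with the elementary estimate $(1+x)^{-r/k}=1+\bigO{x}$, yields, uniformly over the good window,
\[
    \E{I_{r,d+1}}
    =
    \frac{(k!)^{\frac{r}{k}}}{k}\,\frac{\Gamma\!\left(\tfrac{r}{k}\right)}{\Gamma(r)}\,
    (\log n)^{-\frac{r}{k}}\bigl(1+o(1)\bigr)
    .
\]
Summing over the good depths, the number of good nodes is $n-\badRR$, so taking expectations and using $\E{\badRR}=\bigO{n/\log(n)^3}$ gives \eqref{eq:rrt:mean:k}. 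Finally, \eqref{rrt:split:mean} follows by summing \eqref{eq:rrt:mean:k} over $r\in\{1,\dots,k\}$ and noting that the $r=1$ term, of order $n/(\log n)^{1/k}$, dominates all the others.

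There is essentially no obstacle here: once \autoref{lem:bad:rrt} is in hand — and that lemma is where the genuine input about random recursive trees (concentration of the depth profile and the tail bound on the height) enters — the remainder is the same bookkeeping as in \autoref{thm:split:mean}. The only points needing a little care are the uniformity of the $\bigO{\cdot}$ term in \autoref{lem:mean:i} across the whole good window and the passage from $d^{-r/k}$ to $(\log n)^{-r/k}$, both of which are routine.
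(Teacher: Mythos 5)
Your proposal is correct and follows essentially the same route as the paper, which proves the lemma by repeating the argument of \autoref{thm:split:mean} verbatim with the good/bad depth split supplied by \autoref{lem:bad:rrt} and the uniform estimate of \autoref{lem:mean:i} on the good window. The only cosmetic difference is that you write the decomposition explicitly as $\sum_{d}\E{w_d(\treeRRn)}\,\E{I_{r,d+1}}$, which is exactly the bookkeeping the paper leaves implicit.
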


\begin{remark}
    We have not tried to find the limit distributions for \(\kcut(\treeSp)\), and
    \(\kcut(\treeRRn)\). But \(\treeSp\) and \(\treeRRn\) are both of logarithmic height. Thus, the
    same method which we used for treating complete binary trees \cite{2018arXiv181105673S} should
    also work.
\end{remark}

\subsubsection{Conditional Galton-Watson trees}\label{sec:gwtrees}

\newcommand{\treeGw}{\mathbb T^{\mathrm{gw}}}
\newcommand{\treeGwn}{\treeGw_{n}}

A Galton-Watson tree \(\treeGw\) is a random tree that starts with the root node and recursively
attaches a random number of children to each node in the tree, where the numbers of children are
drawn independently from the same distribution \(\cL(\xi)\) (the offspring distribution). A
conditional Galton-Watson tree \(\treeGwn\) is \(\treeGw\) restricted to size \(n\).  See
\cite{svante12} for a comprehensive survey of conditional Galton-Watson trees.

\citet[Theorem 1.6]{janson06} showed that \({\kcut_{k}(\treeGwn)}/\sqrt{n}\) converges weakly to a
Rayleigh distribution and the convergence is also in all moments if \(\xi\) has a
finite exponential moment.  In particular
\begin{equation}
    \frac{
        \e{\kcut_{k}(\treeGwn)}
    }{
        \sqrt{n}
    }
    \to
    \E
    {
        \int_{0}^{1} \left( \frac{2 e(t)}{\sigma} \right)^{-1} \mathrm{d} t
    }
    =
    \sigma
    \sqrt{
        \frac{
            \pi
        }{
            2
        }
    }
    ,
    \label{eq:GW:exp:1}
\end{equation}
where \(e(t)\) denotes a normalized Brownian excursion and \(\sigma^{2}=\V{\xi}\).
It is straight forward to adapt the method in
\cite{janson06} to get the first moment of \(\e{\kcut_{r}(\treeGwn)}\). (Though higher moments and the
limit distribution seems to be elusive.) We formulate this as lemma and refer the reader to
\cite{janson06} for details.
\begin{lemma}\label{lem:GW:exp}
    Assume that \(\E{\xi^{3}}<\infty\).
    Then for \(r \in \{1,\dots,k\}\), 
    \begin{equation}
        \frac{
            \e{\kcut_{r}(\treeGwn)}
        }{
            n^{1-\frac{r}{2 k}}
        }
        \to
        \frac{(k!)^{\frac{r}{k}}}{k}\frac{\Gamma(\frac{r}{k})}{\Gamma(r)}
        \E
        {
            \int_{0}^{1} \left( \frac{2 e(t)}{\sigma} \right)^{-\frac{r}{k}} \mathrm{d} t
        }
        =
        \frac{(k!)^{\frac{r}{k}}}{k}\frac{\Gamma(\frac{r}{k})\Gamma\left( 1-\frac{r}{2 k} \right)}{\Gamma(r)}
        \left(
            \frac{ \sigma }{ \sqrt{2} }
        \right)^{\frac{r}{k}}
        .
        \label{eq:GW:exp:r}
    \end{equation}
    As a result, 
    \begin{equation}
        \label{eq:GW:exp:all}
        \e{\kcut(\treeGwn)}
        \sim
        \e{\kcut_{1}(\treeGwn)}
        \sim
        \frac{(k!)^{\frac{1}{k}}}{k}{\Gamma\left(\frac{1}{k}\right)\Gamma\left( 1-\frac{1}{2 k} \right)}
        \left(
            \frac{ \sigma }{ \sqrt{2} }
        \right)^{\frac{1}{k}}
        n^{1-\frac{1}{2 k}}
        .
    \end{equation}
\end{lemma}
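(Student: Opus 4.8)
The plan is to follow the argument of \cite{janson06} for $\kcut_{k}(\treeGwn)$, the only genuinely new input being the depth-dependent estimate of \autoref{lem:mean:i}. The starting point is the observation already used for $\Pathn$: whether a node $v$ of $\treeGwn$ becomes an $r$-record depends only on the $\Gamma(k)$-clocks of the ancestors of $v$ and on the $\Gamma(r)$-clock of $v$ itself, all of which are independent of the shape of $\treeGwn$. Hence, conditionally on $\treeGwn$, the probability that $v$ is an $r$-record equals $\E{I_{r,d(v)+1}}$, exactly the quantity estimated in \autoref{lem:mean:i}, where $d(v)$ is the depth of $v$ (the number of its ancestors). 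Writing $w_{d}(\treeGwn)$ for the number of nodes of $\treeGwn$ at depth $d$ and taking expectations over the tree gives the exact identity
\[
    \E{\kcut_{r}(\treeGwn)}
    =
    \sum_{d \ge 0} \E{I_{r,d+1}}\, \E{w_{d}(\treeGwn)}
    .
\]

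Next one substitutes the two ingredients. By \autoref{lem:mean:i}, $\E{I_{r,d+1}} = \bigl(1+\bigO{d^{-1/(2k)}}\bigr)\,\frac{(k!)^{r/k}}{k}\frac{\Gamma(r/k)}{\Gamma(r)}\,d^{-r/k}$ uniformly, with the trivial bound $\E{I_{r,d+1}}\le 1$ for the finitely many smallest depths. For the profile one invokes the known asymptotics of the \emph{expected} profile of a conditioned Galton--Watson tree (see, e.g., \cite{drmota2009, svante12}): under $\E{\xi^{3}}<\infty$ one has, uniformly for $d$ in the bulk range $d\asymp\sqrt n$, $\E{w_{d}(\treeGwn)} = \sqrt n\,\bigl(g(d/\sqrt n)+\smallo{1}\bigr)$, where $g(x)$ is the expected local time at level $x$ of the rescaled Brownian excursion $Y\eqd \tfrac{2}{\sigma}e$; and the depths $d=\smallo{\sqrt n}$ and $d\gg\sqrt n$ contribute negligibly after weighting by $d^{-r/k}$, the latter via a tail bound for the height of $\treeGwn$ (the Galton--Watson analogue of \eqref{eq:rrt:height}). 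Recognising $\sum_{d} d^{-r/k}\E{w_{d}(\treeGwn)}$ as a Riemann sum of mesh $n^{-1/2}$ in $t=d/\sqrt n$, and then applying the density-of-occupation formula $\int_{0}^{\infty} x^{-r/k}\ell^{x}(Y)\,\mathrm dx = \int_{0}^{1} Y(t)^{-r/k}\,\mathrm dt$ (together with $g(x)=\E{\ell^{x}(Y)}$), one obtains
\[
    \frac{\E{\kcut_{r}(\treeGwn)}}{n^{1-\frac{r}{2k}}}
    \to
    \frac{(k!)^{\frac{r}{k}}}{k}\frac{\Gamma(\frac{r}{k})}{\Gamma(r)}
    \int_{0}^{\infty} x^{-\frac{r}{k}} g(x)\,\mathrm dx
    =
    \frac{(k!)^{\frac{r}{k}}}{k}\frac{\Gamma(\frac{r}{k})}{\Gamma(r)}\,
    \E{\int_{0}^{1}\Bigl(\tfrac{2e(t)}{\sigma}\Bigr)^{-\frac{r}{k}}\mathrm dt}
    .
\]
The closed form then follows from the known excursion moment $\E{\int_{0}^{1}(2e(t)/\sigma)^{-p}\,\mathrm dt} = \Gamma(1-\tfrac p2)(\sigma/\sqrt 2)^{p}$ for $0<p\le 1$, recorded in \cite{janson06} (for $p=1$ it recovers \eqref{eq:GW:exp:1}); taking $p=r/k$ gives \eqref{eq:GW:exp:r}.

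For \eqref{eq:GW:exp:all}, note that the exponent $1-\tfrac{r}{2k}$ is strictly decreasing in $r$, so among the finitely many summands $\kcut_{1}(\treeGwn),\dots,\kcut_{k}(\treeGwn)$ the $r=1$ term dominates; hence $\E{\kcut(\treeGwn)} = \sum_{r=1}^{k}\E{\kcut_{r}(\treeGwn)}\sim\E{\kcut_{1}(\treeGwn)}$, and substituting $r=1$ into \eqref{eq:GW:exp:r} (so $\Gamma(r)=1$) yields the stated constant.

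I expect the main obstacle to be the profile step: one must control $\sum_{d} d^{-r/k}\E{w_{d}(\treeGwn)}$ with enough uniformity to pass to the limit, combining the bulk approximation $\E{w_{d}(\treeGwn)}\approx\sqrt n\,g(d/\sqrt n)$ with a small-$d$ truncation (the relative error in \autoref{lem:mean:i} degrades near $d=0$, although those depths carry only $\bigO{1}$ nodes each in expectation) and a large-$d$ truncation via a height tail estimate. This is precisely where the hypothesis $\E{\xi^{3}}<\infty$ enters, to make the expected-profile estimate quantitative; the remaining ingredients — the occupation identity, the Riemann-sum passage, and the excursion moment — are routine or may be cited from \cite{janson06}.
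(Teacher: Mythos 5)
Your proposal is correct and takes essentially the route the paper intends: the paper gives no detailed proof, merely noting that it is ``straightforward to adapt the method in'' Janson's paper, which is exactly what you do --- combining \autoref{lem:mean:i} with the convergence of the (expected) depth profile of the conditioned Galton--Watson tree to the local time of the rescaled Brownian excursion, and the excursion moment \(\E{\int_0^1 (2e(t)/\sigma)^{-p}\,\mathrm{d}t} = \Gamma\left(1-\tfrac{p}{2}\right)\left(\sigma/\sqrt{2}\right)^{p}\) for \(p=r/k\). The technical points you flag (small- and large-depth truncations and the uniformity/uniform-integrability of the negative moment of the rescaled depth, where \(\E{\xi^{3}}<\infty\) enters) are precisely the ingredients Janson's argument already supplies for exponents \(p\le 1\), so the adaptation goes through as you describe.
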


\section{Some auxiliary results}\label{sec:int}

\begin{lemma}
    Let \(G_{k}\eql\Gam(k)\).
    Let \(\alpha \eqd \frac{1}{2}\left( \frac{1}{k} + \frac{1}{k+1} \right)\) and \(x_{0} \eqd m^{-\alpha}\).
    Then uniformly for all \(x \in [0,x_0]\),
    \begin{align}
        \p{G_{k} > x}^{m}
        =
        \left( 
            \frac{
                \Gamma\left( k,x \right)
            }{
                \Gamma(k)
            }
        \right)^{m}
        =
        \left( 1+\bigO{m^{-\frac{1}{2k}}} \right)
        \exp\left( -\frac{m x^{k}}{k!} \right),
        \label{eq:gam:approx}
    \end{align}
    where \(\Gamma(\ell,z)\) denotes the upper incomplete gamma function.
    \label{lem:gam:approx}
\end{lemma}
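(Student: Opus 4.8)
The plan is to expand the Gamma tail near the origin, raise it to the $m$-th power through logarithms, and then read the error term off the special value of $\alpha$. First I would record an exact small-$x$ identity for the tail. Since $\p{G_{k}>x} = 1 - \int_{0}^{x} t^{k-1}\erm^{-t}/(k-1)!\,\mathrm{d}t$ and $x^{k}/k! = \int_{0}^{x} t^{k-1}/(k-1)!\,\mathrm{d}t$,
\[
    \p{G_{k}>x} = 1 - \frac{x^{k}}{k!} + r(x), \qquad r(x) \eqd \int_{0}^{x} \frac{t^{k-1}(1-\erm^{-t})}{(k-1)!}\,\mathrm{d}t,
\]
and the elementary bound $0 \le 1-\erm^{-t} \le t$ gives $0 \le r(x) \le x^{k+1}/\bigl((k+1)(k-1)!\bigr)$ for every $x \ge 0$. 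This remainder is uniform, so no restriction on the range of $x$ is needed at this step (one may equivalently obtain it from the power-series expansion of $\Gamma(k,x)/\Gamma(k)$, where the coefficients of $x,\dots,x^{k-1}$ cancel).

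Next I would take the $m$-th power via logarithms. Write $u = u(x) \eqd \p{G_{k}\le x}$, so $\p{G_{k}>x} = 1-u$; by the identity above $u = \frac{x^{k}}{k!} - r(x) = \frac{x^{k}}{k!}\bigl(1+\bigO{x}\bigr)$, hence $0 \le u \le x_{0}^{k}/k! \to 0$ uniformly over $x \in [0,x_{0}]$ since $x_{0} = m^{-\alpha}\to 0$. Using $\log(1-u) = -u + \bigO{u^{2}}$ for $u$ in a fixed neighbourhood of $0$,
\[
    m\log\p{G_{k}>x} = -mu + \bigO{mu^{2}} = -\frac{mx^{k}}{k!} + m\,r(x) + \bigO{mu^{2}}.
\]
It remains to bound $m\,r(x)$ and $mu^{2}$, and this is precisely what $\alpha = \tfrac{1}{2}\bigl(\tfrac{1}{k}+\tfrac{1}{k+1}\bigr)$ arranges. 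One computes $\alpha(k+1) = 1+\tfrac{1}{2k}$, so that for $x \le x_{0}$,
\[
    m\,r(x) \le \frac{m\,x_{0}^{k+1}}{(k+1)(k-1)!} = \bigO{m^{\,1-\alpha(k+1)}} = \bigO{m^{-\frac{1}{2k}}},
\]
and, since $u = \bigO{x^{k}}$ and $2\alpha k = 2-\tfrac{1}{k+1}$,
\[
    \bigO{mu^{2}} = \bigO{m\,x_{0}^{2k}} = \bigO{m^{\,1-2\alpha k}} = \bigO{m^{-\frac{k}{k+1}}} = \bigO{m^{-\frac{1}{2k}}},
\]
the last equality using $\tfrac{k}{k+1}\ge\tfrac{1}{2k}$ for every integer $k\ge1$.

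Combining, $m\log\p{G_{k}>x} = -\frac{mx^{k}}{k!} + \bigO{m^{-1/(2k)}}$ uniformly in $x\in[0,x_{0}]$, and exponentiating with $\erm^{y} = 1+\bigO{y}$ as $y\to0$ yields $\p{G_{k}>x}^{m} = \exp\!\bigl(-\frac{mx^{k}}{k!}\bigr)\bigl(1+\bigO{m^{-1/(2k)}}\bigr)$, which is the claim (the finitely many small $m$ with $x_{0}>1$ only affect the implied constant). I do not expect a genuine obstacle here; the only thing requiring care is keeping every $\bigO{\cdot}$ uniform in both $x$ and $m$ — in particular that the remainder in $\log(1-u)=-u+\bigO{u^{2}}$ carries a single implied constant once $u$ is bounded away from $1$, which holds since $u\le x_{0}^{k}/k!\to0$ — together with the elementary arithmetic $\alpha(k+1)=1+\tfrac{1}{2k}$, $2\alpha k = 2-\tfrac{1}{k+1}$ and $\tfrac{k}{k+1}\ge\tfrac{1}{2k}$, which is exactly the balance the definition of $\alpha$ is designed to achieve.
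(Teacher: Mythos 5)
Your proof is correct and follows essentially the same route as the paper: expand $\Gamma(k,x)/\Gamma(k)=1-x^{k}/k!+O(x^{k+1})$ near the origin and use the arithmetic $-\alpha(k+1)+1=-\tfrac{1}{2k}$ to absorb the error after raising to the $m$-th power. You are in fact slightly more explicit than the paper, which simply cites the NIST series expansion and leaves the second-order term of the logarithm implicit; your check that $m u^{2}=O(m x_{0}^{2k})=O(m^{-k/(k+1)})=O(m^{-1/(2k)})$ fills in that detail.
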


\begin{proof}
    By the density function of gamma distributions,
    \(
        \p{G_{k} > x}
        =
        {\Gamma\left( k, x \right)}/{\Gamma(k)}
        .
    \)
    It then follows from the series expansion of the incomplete gamma function \cite[8.7.3]{NIST}, that
    uniformly for all \(x \le x_{0}\),
    \begin{align}
        \left( 
            \frac{\Gamma\left( k, x \right)}{\Gamma(k)}
        \right)^{m}
        =
        \left( 1 - \frac{x^{k}}{k!} + \bigO{x_{0}^{k+1}} \right)^{m}
        =
        \left( 1+\bigO{m^{-\frac{1}{2k}}} \right)
        \exp\left( -\frac{m x^{k}}{k!} \right),
        \label{eq:mean:i:asympt}
    \end{align}
    where we use that \(-\alpha (k+1)+1=-\frac{1}{2k}\).
\end{proof}

\begin{lemma}
    \label{lem:i:int}
    Let \(G_{k}\eql\Gam(k)\).
    Let \(a \ge 0\) and \(b \ge 1\) be fixed. Then uniformly for \(m \ge 1\),
    \begin{align}
        \int_{0}^{\infty}
        x^{b-1} \erm^{-a x}
        \p{G_{k} > x}^{m}
        \,
        \mathrm{d}x
        =
        \left( 1+\bigO{m^{-\frac{1}{2k}}} \right)
        \frac{(k!)^{\frac{b}{k}} 
        }{k}
        {\Gamma\left(\frac{b}{k}\right)}
        m^{-\frac{b}{k}}
        .
        \label{eq:i:int}
    \end{align}
\end{lemma}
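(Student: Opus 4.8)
The plan is to exploit the approximation from \autoref{lem:gam:approx}, namely that $\p{G_k>x}^m$ behaves like $\erm^{-mx^k/k!}$ for small $x$, together with the fact that the whole integral concentrates on $x=\bigO{m^{-1/k}}$, so that the extra factors $x^{b-1}$ and $\erm^{-ax}$ produce only lower-order corrections. Concretely, I would set $x_0\eqd m^{-\alpha}$ with $\alpha=\tfrac12(\tfrac1k+\tfrac1{k+1})$ — the very cutoff appearing in \autoref{lem:gam:approx} — and split the integral as $\int_0^{x_0}+\int_{x_0}^{\infty}$. A handy observation is that $1-\alpha k=\tfrac{1}{2(k+1)}>0$, so $mx_0^k=m^{1-\alpha k}\to\infty$, and also $\alpha>\tfrac{1}{2k}$, so $x_0=\smallo{m^{-1/2k}}$.

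For the tail $x\ge x_0$: since $x\mapsto\p{G_k>x}$ is nonincreasing, I would write $\p{G_k>x}^m\le\p{G_k>x_0}^{m-1}\,\p{G_k>x}$. The series expansion used in the proof of \autoref{lem:gam:approx} gives $\p{G_k>x_0}=1-\tfrac{x_0^k}{k!}+\bigO{x_0^{k+1}}$, whence $\p{G_k>x_0}^{m-1}=\bigO{\exp(-c\,m^{1-\alpha k})}$ for some $c>0$. Because $b\ge1$ and $G_k$ has all moments finite, the layer-cake identity gives $\int_0^{\infty}x^{b-1}\erm^{-ax}\p{G_k>x}\,\mathrm{d}x\le\int_0^{\infty}x^{b-1}\p{G_k>x}\,\mathrm{d}x=\E{G_k^b}/b<\infty$, so the tail integral is $\bigO{\exp(-c\,m^{1-\alpha k})}$, which is negligible against the target order $m^{-b/k}$.

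For the bulk $x\in[0,x_0]$: \autoref{lem:gam:approx} yields $\p{G_k>x}^m=(1+\bigO{m^{-1/2k}})\erm^{-mx^k/k!}$ uniformly, and $\erm^{-ax}=1+\bigO{x_0}=1+\bigO{m^{-1/2k}}$ uniformly on $[0,x_0]$ (and trivially if $a=0$), so $\int_0^{x_0}x^{b-1}\erm^{-ax}\p{G_k>x}^m\,\mathrm{d}x=(1+\bigO{m^{-1/2k}})\int_0^{x_0}x^{b-1}\erm^{-mx^k/k!}\,\mathrm{d}x$. The substitution $u=mx^k/k!$ turns $\int_0^{\infty}x^{b-1}\erm^{-mx^k/k!}\,\mathrm{d}x$ into $\tfrac{(k!)^{b/k}}{k}m^{-b/k}\int_0^{\infty}u^{b/k-1}\erm^{-u}\,\mathrm{d}u=\tfrac{(k!)^{b/k}}{k}\Gamma(b/k)\,m^{-b/k}$, and extending the range from $[0,x_0]$ to $[0,\infty)$ costs only $\tfrac{(k!)^{b/k}}{k}m^{-b/k}\,\Gamma\!\big(\tfrac{b}{k},\tfrac{m^{1-\alpha k}}{k!}\big)=\bigO{\exp(-c\,m^{1-\alpha k})}$, again negligible.

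Combining the two pieces and absorbing every exponentially small term into the multiplicative error $1+\bigO{m^{-1/2k}}$ (legitimate since $m^{-b/k}$ decays only polynomially) gives \eqref{eq:i:int}. The only points that need a little care are the uniformity over all $m\ge1$ — which is automatic because \autoref{lem:gam:approx} is uniform and $x_0$ is chosen as a function of $m$ — and the boundary case $a=0$, where the exponential factor gives no help in controlling the tail; this is exactly why the tail step peels off a single factor $\p{G_k>x}$ and invokes finiteness of $\E{G_k^b}$ rather than a cruder bound. Everything else is routine bookkeeping of error terms.
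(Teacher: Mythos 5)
Your proof is correct and follows essentially the same route as the paper's: split at the same cutoff \(x_{0}=m^{-\alpha}\), apply \autoref{lem:gam:approx} on the bulk (absorbing \(\erm^{-ax}=1+\bigO{x_{0}}\) into the multiplicative error), evaluate the resulting integral as an (incomplete) gamma function, and show the tail is exponentially small. Your tail bound via monotonicity and finiteness of \(\E{G_{k}^{b}}\) is a slightly more explicit version of the paper's brief verification, but not a different argument.
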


\begin{proof}
    By \autoref{lem:gam:approx}, the left-hand-side of \eqref{eq:i:int} equals
    \begin{align}
        \int_{0}^{x_{0}}
        {x^{b-1} \erm^{-ax}}
        \left( 
            \frac{\Gamma\left( k, x \right)}{\Gamma(k)}
        \right)^{m}
        \,
        \mathrm{d}x
        +
        \int_{x_0}^{\infty}
        {x^{b-1} \erm^{-ax}}
        \left( 
            \frac{\Gamma\left( k, x \right)}{\Gamma(k)}
        \right)^{m}
        \,
        \mathrm{d}x
        \eqd
        A_{1} + A_{2},
        \label{eq:mean:i:A1:A2}
    \end{align}
    where \(x_{0}=m^{-\alpha}\) and \(\alpha=\frac{1}{2}\left( \frac{1}{k}+\frac{1}{k+1} \right)\).
    Then
    \begin{align}
        A_{1}
        &
        =
        \left( 1+\bigO{m^{-\frac{1}{2k}}} \right)
        \int_{0}^{x_{0}}
        {x^{b-1} \erm^{-a x}}
        \exp\left( -\frac{m x^{k}}{k!} \right)
        \,
        \mathrm{d}x
        \\
        &
        =
        \left( 1+\bigO{m^{-\frac{1}{2k}}} \right)
        \frac{(k!)^{\frac{b}{k}} }{k}
        \left(\Gamma\left(\frac{b}{k}\right) - \Gamma\left( \frac{b}{k}, w_{0} \right)  \right)
        m^{-\frac{b}{k}}
        ,
        \label{eq:mean:i:A1}
    \end{align}
    where \(w_{0} = \frac{m x_{0}^{k}}{k!} = \Theta(m^{\frac{1}{2k(k+1)}})\).
    By the upper bound given in \cite[8.11.i]{NIST}, 
    \( \Gamma\left( \frac{b}{k}, w_{0} \right) = \bigO{\erm^{-\frac{w_{0}}{2}}}\), which is exponentially small and
    can be neglected.
    Using \eqref{eq:mean:i:asympt}, one can verify that \(A_{2} = \bigO{\erm^{-\frac{w_{0}}{2}}}\)
    which can also be neglected.
\end{proof}

\begin{lemma}
    \label{lem:int:double:exp}
    For \(a>0\), \(b>0\) and \(k \ge 2\),
    \begin{align}
        \xi_{k}(a,b)
        &
        \eqd
        \int_{0}^{\infty}
        \int_{y}^{\infty}
        \ee^{-a x^k/k!-b y^k/k!}
        \, \mathrm dx
        \, \mathrm dy
        \\
        &
        =
        \frac{\Gamma\left( \frac{2}{k} \right)}{k}
        \left( 
            \frac{k!}{a}
        \right)^{\frac{2}{k}}
        \HGFunc
        \left( \frac{2}{k}, \frac{1}{k}; 1+\frac{1}{k}; -\frac{b}{a} \right)
        \label{eq:doub:exp}
        ,
    \end{align}
    where \(\HGFunc\) denotes the hypergeometric function.
    In particular, 
    \begin{equation}
        \label{eq:doub:exp:2}
        \xi_{2}(a,b) = {\arctan\left(\sqrt{\frac{b}{a}}\right)}{(ab)^{-\frac{1}{2}}}
        .
    \end{equation}
\end{lemma}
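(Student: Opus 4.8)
The plan is to collapse the double integral to a one-dimensional one by a scaling substitution and then read off the answer from Euler's integral representation of $\HGFunc$.

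First I would reparametrise the region $\{0<y<x\}$ by writing $y=wx$ with $w\in(0,1)$ for each fixed $x>0$; the integrand is nonnegative, so Tonelli permits reordering, and $\mathrm dy=x\,\mathrm dw$ gives $\xi_k(a,b)=\int_0^1\int_0^\infty x\,\ee^{-x^k(a+bw^k)/k!}\,\mathrm dx\,\mathrm dw$. The inner integral is elementary: for $c>0$ the substitution $v=cx^k$ yields $\int_0^\infty x\,\ee^{-cx^k}\,\mathrm dx=\Gamma(2/k)c^{-2/k}/k$, so taking $c=(a+bw^k)/k!$ one obtains
\[
    \xi_k(a,b)=\frac{\Gamma(2/k)}{k}\,(k!)^{2/k}\int_0^1\bigl(a+bw^k\bigr)^{-2/k}\,\mathrm dw
    =\frac{\Gamma(2/k)}{k}\Bigl(\frac{k!}{a}\Bigr)^{2/k}\int_0^1\bigl(1+\tfrac{b}{a}w^k\bigr)^{-2/k}\,\mathrm dw .
\]

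Next I would substitute $t=w^k$ to turn the remaining integral into $\tfrac1k\int_0^1 t^{1/k-1}(1+\tfrac{b}{a}t)^{-2/k}\,\mathrm dt$ and match it against the Euler representation \cite[15.6.1]{NIST},
\[
    \HGFunc(\alpha,\beta;\gamma;x)=\frac{\Gamma(\gamma)}{\Gamma(\beta)\Gamma(\gamma-\beta)}\int_0^1 t^{\beta-1}(1-t)^{\gamma-\beta-1}(1-xt)^{-\alpha}\,\mathrm dt ,
\]
with $\beta=\tfrac1k$, $\gamma=1+\tfrac1k$, $\alpha=\tfrac2k$ and $x=-b/a$. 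The point of the choice $\gamma-\beta=1$ is that the factor $(1-t)^{\gamma-\beta-1}$ drops out and the prefactor collapses to $\Gamma(1+\tfrac1k)/\Gamma(\tfrac1k)=\tfrac1k$, exactly cancelling the leading $\tfrac1k$; this yields \eqref{eq:doub:exp}. The hypotheses of the Euler formula, namely $\gamma>\beta>0$ and that $-b/a$ avoids the branch cut $[1,\infty)$, hold trivially for $a,b>0$ and $k\ge 2$. Finally, for $k=2$ one has $\Gamma(2/k)=1$ and $(k!/a)^{2/k}=2/a$, so $\xi_2(a,b)=\tfrac1a\int_0^1(1+\tfrac{b}{a}w^2)^{-1}\,\mathrm dw=\tfrac1a\sqrt{a/b}\,\arctan\sqrt{b/a}=\arctan(\sqrt{b/a})/\sqrt{ab}$, which is \eqref{eq:doub:exp:2}.

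The computation is short and there is no genuine obstacle; the only step that requires a little care is the bookkeeping in the parameter identification for the Euler representation (and the trivial check of its domain of validity).
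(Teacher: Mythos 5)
Your proof is correct and takes essentially the same route as the paper's: both integrate out the scale variable to reduce $\xi_k(a,b)$ to the one-dimensional integral $\tfrac{\Gamma(2/k)}{k^2}\left(\tfrac{k!}{a}\right)^{2/k}\int_0^1 t^{1/k-1}\left(1+\tfrac{b}{a}t\right)^{-2/k}\mathrm{d}t$ and then invoke the Euler representation \cite[15.6.1]{NIST}; the paper's polar substitution with $u=\tan(\theta)^k$ is the same change of variables as your $y=wx$, $t=w^k$. The only (harmless) difference is that you obtain the $k=2$ case by direct elementary integration, while the paper cites \cite[15.4.3]{NIST}.
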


\begin{proof}
    Changing to polar system by \(x = r \cos(\theta)\) and \(y = r \sin(\theta)\),
    \begin{align}
        \xi_{k}(a,b)
        =
        &
        \int_{0}^{\pi/4}
        \int_{0}^{\infty}
        \exp
        \left[ 
            -r^{k}
            \left( 
            a \frac{\cos(\theta)^{k}}{k!}
            +
            b \frac{\sin(\theta)^{k}}{k!}
            \right)
        \right]
        r
        \, \mathrm{d}r
        \, \mathrm{d}\theta
        \\
        &
        =
        \int_{0}^{\pi/4}
            \left( 
            a \frac{\cos(\theta)^{k}}{k!}
            +
            b \frac{\sin(\theta)^{k}}{k!}
            \right)^{-\frac{2}{k}}
            \frac{\Gamma(\frac{2}{k})}{k}
        \, \mathrm{d}\theta
        \\
        &
        =
        \frac{\Gamma(\frac{2}{k})}{k}
        \left( \frac{k!}{a} \right)^{\frac{2}{k}}
        \int_{0}^{\pi/4}
            \left( 
            1
            +
            \frac{b}{a} \tan(\theta)^{k}
            \right)^{-\frac{2}{k}}
        \, \mathrm{d}\theta
        \\
        &
        =
        \frac{\Gamma(\frac{2}{k})}{k^2}
        \left( \frac{k!}{a} \right)^{\frac{2}{k}}
        \int_{0}^{1}
            u^{\frac{1}{k}-1}
            \left( 
            1
            +
            \frac{b}{a} u
            \right)^{-\frac{2}{k}}
        \, \mathrm{d}u
        ,
        \label{eq:doub:exp:polar}
    \end{align}
    which equals the right-hand-side of \eqref{eq:doub:exp} by
    \cite[15.6.1]{NIST}.
    For \eqref{eq:doub:exp:2}, see \cite[15.4.3]{NIST}.
\end{proof}

\begin{lemma}
    \label{lem:bound:xi}
    For \(a>0\), \(b>0\) and \(k \ge 2\),
    \begin{align}
        \label{eq:bound:xi:1}
        (a+b)^{-\frac{2}{k}}
        \le
        \frac{
            k
        }{
            \Gamma\left( \frac{2}{k} \right)
            \left( k! \right)^{\frac{2}{k}}
        }
        \xi_{k}(a,b)
        \le
        a^{-\frac{2}{k}}
        +
        b^{-\frac{2}{k}}
        .
    \end{align}
    Moreover, \(\xi_{k}(a,b)\) is monotonically decreasing in both \(a\) and \(b\).
\end{lemma}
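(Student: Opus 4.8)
The plan is to reduce both inequalities of \eqref{eq:bound:xi:1} to a single one–dimensional integral for $\xi_{k}$ — which is essentially already produced in the proof of \autoref{lem:int:double:exp} — and to read off the monotonicity statement directly from the definition \eqref{eq:xi:def}.

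First I would recall the polar–coordinate computation \eqref{eq:doub:exp:polar}: clearing the constant there, it gives
\[
    \frac{k}{\Gamma\left(\tfrac{2}{k}\right)(k!)^{\frac{2}{k}}}\,\xi_{k}(a,b)
    =
    \int_{0}^{\pi/4}\bigl(a\cos^{k}\theta+b\sin^{k}\theta\bigr)^{-\frac{2}{k}}\,\mathrm{d}\theta .
\]
Writing $a\cos^{k}\theta+b\sin^{k}\theta=a\cos^{k}\theta\bigl(1+\tfrac{b}{a}\tan^{k}\theta\bigr)$, so that $\bigl(a\cos^{k}\theta+b\sin^{k}\theta\bigr)^{-2/k}=a^{-2/k}\cos^{-2}\theta\,\bigl(1+\tfrac{b}{a}\tan^{k}\theta\bigr)^{-2/k}$, and substituting $v=\tan\theta$ (whence $\cos^{-2}\theta\,\mathrm{d}\theta=\mathrm{d}v$, and $\theta\in[0,\pi/4]$ corresponds to $v\in[0,1]$), the last integral becomes
\[
    \frac{k}{\Gamma\left(\tfrac{2}{k}\right)(k!)^{\frac{2}{k}}}\,\xi_{k}(a,b)
    =
    a^{-\frac{2}{k}}\int_{0}^{1}\Bigl(1+\tfrac{b}{a}v^{k}\Bigr)^{-\frac{2}{k}}\,\mathrm{d}v .
\]

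Now on $[0,1]$ one has $0\le v^{k}\le 1$, hence $1\le 1+\tfrac{b}{a}v^{k}\le 1+\tfrac{b}{a}=\tfrac{a+b}{a}$, and therefore $\bigl(\tfrac{a+b}{a}\bigr)^{-2/k}\le\bigl(1+\tfrac{b}{a}v^{k}\bigr)^{-2/k}\le 1$. Integrating this over $v\in[0,1]$ and multiplying by $a^{-2/k}$ yields
\[
    (a+b)^{-\frac{2}{k}}=a^{-\frac{2}{k}}\Bigl(\tfrac{a+b}{a}\Bigr)^{-\frac{2}{k}}
    \;\le\;
    \frac{k}{\Gamma\left(\tfrac{2}{k}\right)(k!)^{\frac{2}{k}}}\,\xi_{k}(a,b)
    \;\le\;
    a^{-\frac{2}{k}}\;\le\; a^{-\frac{2}{k}}+b^{-\frac{2}{k}} ,
\]
which is \eqref{eq:bound:xi:1} (the upper bound obtained is in fact slightly stronger than stated, but only the symmetric form is used later). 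For the monotonicity I would argue straight from \eqref{eq:xi:def}: on the integration region every point $(x,y)$ satisfies $x>y\ge 0$, so $x>0$, and then $a\mapsto\exp(-a x^{k}/k!-b y^{k}/k!)$ is strictly decreasing; since this holds on a set of full Lebesgue measure in the region, $\xi_{k}(a,b)$ is strictly decreasing in $a$, and by the identical argument in $b$.

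I do not expect any genuine obstacle. The one–dimensional representation above comes from the same change of variables used in the proof of \autoref{lem:int:double:exp}, and the only point requiring a little care is the bookkeeping of the constant $\Gamma(\tfrac{2}{k})(k!)^{2/k}/k$; once that integral is in hand, both bounds follow at once from $0\le v^{k}\le 1$ on $[0,1]$, and the monotonicity requires no computation at all.
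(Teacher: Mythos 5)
Your proof is correct, and it takes a genuinely more elementary route than the paper. You work from the intermediate polar-coordinate form \eqref{eq:doub:exp:polar} of \autoref{lem:int:double:exp}, reduce it via \(v=\tan\theta\) to \(a^{-2/k}\int_0^1\bigl(1+\tfrac{b}{a}v^k\bigr)^{-2/k}\,\mathrm{d}v\) (your constant bookkeeping checks out), and then the two-sided bound follows from the trivial estimate \(1\le 1+\tfrac{b}{a}v^k\le\tfrac{a+b}{a}\) on \([0,1]\); your monotonicity argument goes straight from the definition \eqref{eq:xi:def}, since the integrand is pointwise decreasing in \(a\) and in \(b\) on the region \(x>y\ge 0\) (with \(y>0\) almost everywhere). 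The paper instead passes to the closed hypergeometric form \eqref{eq:doub:exp}, applies the Pfaff transformation to write \(\xi_k^{*}(a,b)=(a+b)^{-2/k}\HGFunc\left(\tfrac{2}{k},1;1+\tfrac{1}{k};\tfrac{b}{a+b}\right)\), invokes a two-sided bound of Karp on this \({}_2F_1\), and uses the hypergeometric derivative formula for the monotonicity. What your approach buys is self-containedness: no external hypergeometric inequality or derivative formula is needed, and you even obtain the sharper upper bound \(a^{-2/k}\) (tight as \(b\to 0\), since \(\xi_k(a,0)=\tfrac{\Gamma(2/k)}{k}(k!/a)^{2/k}\)); what the paper's route buys is that the same hypergeometric representation is already on hand for \autoref{lem:int:hyper}, so the bounds come as a by-product of machinery used elsewhere. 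Either way only the symmetric bound \(a^{-2/k}+b^{-2/k}\) is needed in \autoref{lem:A1}, so your proof is a valid substitute.
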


\begin{proof}
    Let
    \begin{align}
        \xi_{k}^{*}(a,b)
        \eqd
        \frac{
            k
        }{
            \Gamma\left( \frac{2}{k} \right)
            \left( k! \right)^{\frac{2}{k}}
        }
        \xi_{k}(a,b)
        =
        \left(a+b\right)^{-\frac{2}{k}}
        \HGFunc
        \left( 
            \frac{2}{k},
            1;
            1+\frac{1}{k};
            \frac{b}{a+b}
        \right)
        ,
        \label{eq:bound:xi:5}
    \end{align}
    where we use \cite[15.8.1]{NIST}.
    Let \(\alpha_{1}=\frac{k}{k+1}\). By \citet[cor.~2]{karp15}, for \(x \in (0,1)\),
    \begin{align}
        \left( 1-\alpha_{1} x \right)^{-\frac{2}{k}}
        \le
        \HGFunc
        \left( 
            \frac{2}{k},
            1;
            1+\frac{1}{k};
            x
        \right)
        \le
        1-\alpha_{1}
        +
        \alpha_{1}
        \left( 
            1-x
        \right)^{-\frac{2}{k}}
        \label{eq:bound:xi:3}
        .
    \end{align}
    This together with \eqref{eq:bound:xi:5} give us \eqref{eq:bound:xi:1}.

    For monotonicity, using the derivative formula \cite[15.5.1]{NIST}, it is easy to verify that
    for \(a > 0\) and \(b > 0\)
    \(
        \frac{\partial}{\partial a} 
        \xi_{k}^{*}(a,b)
        <0
    \)
    and
    \(
        \frac{\partial}{\partial b} 
        \xi_{k}^{*}(a,b)
        <0
        .
    \)
\end{proof}

\begin{lemma}
    For \(k \ge 2\), let
    \begin{align}
        \lambda_{k}
        \eqd
        \int_{0}^{1}
        \int_{0}^{1-s}
        \xi_{k}(s,t)
        \, \mathrm{d}t
        \, \mathrm{d}s
        .
        \label{eq:xi:k}
    \end{align}
    Then
    \begin{align}
        \label{eq:xi:k:1}
        \lambda_{k} =
        \left\{
        \begin{array}{*2{>{\displaystyle}l}}
        \frac{\pi  \cot \left(\frac{\pi }{k}\right) \Gamma \left(\frac{2}{k}\right)
            (k!)^{\frac{2}{k}}}{2 \left({k}-2\right) \left({k}-1\right)}
        & 
        k>2,
        \\
        \frac{\pi^2}{4}
        &
        k=2
        .
        \end{array}
        \right.
    \end{align}
    \label{lem:int:hyper}
\end{lemma}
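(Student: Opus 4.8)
The plan is to collapse the two-dimensional integral \eqref{eq:xi:k} defining \(\lambda_k\) down to a single trigonometric integral by inserting the polar representation of \(\xi_k\) obtained in the proof of \autoref{lem:int:double:exp}, and then to evaluate that integral via the substitution \(u=\tan\theta\) together with a classical identity for the digamma function \(\psi\).

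\textbf{Reduction to one dimension.} The proof of \autoref{lem:int:double:exp} shows
\[
    \xi_k(s,t)
    =
    \frac{\Gamma(2/k)\,(k!)^{2/k}}{k}
    \int_0^{\pi/4}\bigl(s\cos^k\theta+t\sin^k\theta\bigr)^{-2/k}\,\mathrm d\theta .
\]
Since everything in sight is non-negative, Tonelli's theorem lets me plug this into \eqref{eq:xi:k} and exchange the order of integration:
\[
    \lambda_k
    =
    \frac{\Gamma(2/k)\,(k!)^{2/k}}{k}
    \int_0^{\pi/4}
    \left(
    \int_0^1\int_0^{1-s}\bigl(cs+dt\bigr)^{-2/k}\,\mathrm dt\,\mathrm ds
    \right)\mathrm d\theta,
    \qquad c=\cos^k\theta,\ d=\sin^k\theta .
\]

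\textbf{The inner triangle integral.} For fixed \(\theta\) (hence fixed \(c,d\) with \(c\ne d\) off a null set) I integrate over the simplex \(\{s,t>0,\ s+t<1\}\) first in \(t\), then in \(s\); both are elementary antiderivatives of powers. For \(k>2\) this produces, after the algebraic simplification
\(\frac{c^{2-2/k}-d^{2-2/k}}{c-d}-c^{1-2/k}=\frac{d\,(c^{1-2/k}-d^{1-2/k})}{c-d}\), the value
\[
    \int_0^1\int_0^{1-s}(cs+dt)^{-2/k}\,\mathrm dt\,\mathrm ds
    =
    \frac{c^{1-2/k}-d^{1-2/k}}{(1-\tfrac2k)(2-\tfrac2k)\,(c-d)} .
\]
Substituting \(c^{1-2/k}=\cos^{k-2}\theta\), \(d^{1-2/k}=\sin^{k-2}\theta\) and using \(k(1-\tfrac2k)(2-\tfrac2k)=\tfrac{2(k-1)(k-2)}{k}\) gives, for \(k>2\),
\[
    \lambda_k
    =
    \frac{k\,\Gamma(2/k)\,(k!)^{2/k}}{2(k-1)(k-2)}
    \int_0^{\pi/4}\frac{\cos^{k-2}\theta-\sin^{k-2}\theta}{\cos^k\theta-\sin^k\theta}\,\mathrm d\theta .
\]

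\textbf{The trigonometric integral.} Dividing numerator and denominator of the integrand by \(\cos^k\theta\) rewrites it as \(\sec^2\theta\,\frac{1-\tan^{k-2}\theta}{1-\tan^k\theta}\), so the substitution \(u=\tan\theta\) yields \(\int_0^1\frac{1-u^{k-2}}{1-u^k}\,\mathrm du\), an integral with a bounded integrand. Setting \(x=u^k\) turns it into \(\tfrac1k\int_0^1\frac{x^{1/k-1}-x^{-1/k}}{1-x}\,\mathrm dx\), and the Gauss formula \(\int_0^1\frac{x^{a-1}-x^{b-1}}{1-x}\,\mathrm dx=\psi(b)-\psi(a)\) (with \(a=\tfrac1k\), \(b=1-\tfrac1k\)) together with the reflection formula \(\psi(1-z)-\psi(z)=\pi\cot(\pi z)\) evaluates it to \(\tfrac{\pi}{k}\cot(\tfrac{\pi}{k})\). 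Inserting this into the previous display gives exactly the claimed value of \(\lambda_k\) for \(k>2\).

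\textbf{The case \(k=2\).} Now \(1-\tfrac2k=0\), so the \(t\)- and \(s\)-integrations in the inner integral produce logarithms instead of powers; the same computation gives \(\int_0^1\int_0^{1-s}(cs+dt)^{-1}\,\mathrm dt\,\mathrm ds=\frac{\ln(c/d)}{c-d}=\frac{2\ln\cot\theta}{\cos2\theta}\), hence \(\lambda_2=2\int_0^{\pi/4}\frac{\ln\cot\theta}{\cos2\theta}\,\mathrm d\theta\). The substitution \(u=\tan\theta\) turns this into \(-2\int_0^1\frac{\ln u}{1-u^2}\,\mathrm du=2\sum_{n\ge0}(2n+1)^{-2}=\pi^2/4\); alternatively one may start from the explicit form \(\xi_2(s,t)=\arctan(\sqrt{t/s})/\sqrt{st}\) in \eqref{eq:doub:exp:2}, or simply note that \(\pi^2/4\) is the \(k\to2\) limit of the \(k>2\) expression (since \(\cot(\pi/k)/(k-2)\to\pi/4\)).

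There is no genuine obstacle here: the argument is a chain of routine one-variable integrations. The only points demanding care are the bookkeeping in the inner triangle integral of Step 2 — in particular isolating the degenerate exponent at \(k=2\) and checking that the boundary terms at \(s=0\) and at \(\theta=\pi/4\) cause no trouble — and correctly invoking the Gauss and reflection identities for the digamma function in Step 3.
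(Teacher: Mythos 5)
Your proof is correct, and it takes a genuinely different route from the paper's. The paper first evaluates \(\xi_k(a,b)\) in closed form as a hypergeometric function (\autoref{lem:int:double:exp}) and then, for \(k\ge 3\), must integrate that hypergeometric expression over the triangle --- a step it verifies ``using Mathematica,'' only indicating that a human proof could be extracted from the series expansion; the case \(k=2\) is handled separately from the arctangent form \eqref{eq:doub:exp:2} by a polar substitution. You instead stop at the intermediate polar representation of \(\xi_k\) (before any hypergeometric function appears), use Tonelli to swap the \(\theta\)-integral with the integral over the simplex, evaluate the elementary triangle integral \(\int_0^1\int_0^{1-s}(cs+dt)^{-2/k}\,\mathrm dt\,\mathrm ds\) in closed form, and reduce everything to \(\int_0^1\frac{1-u^{k-2}}{1-u^k}\,\mathrm du=\frac{\pi}{k}\cot\left(\frac{\pi}{k}\right)\) via the Gauss digamma integral and the reflection formula; the degenerate exponent at \(k=2\) is treated separately with logarithms and recovers \(\pi^2/4\). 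The computations check out (including the prefactor bookkeeping \(k(1-\tfrac2k)(2-\tfrac2k)=\tfrac{2(k-1)(k-2)}{k}\) and the removable singularities at \(\theta=\pi/4\), \(s=0\)), so what your approach buys is a fully self-contained, computer-algebra-free and hypergeometric-free proof of \eqref{eq:xi:k:1}, which in effect also supplies the human proof of the identity recorded in Remark~\ref{rm:hyper:cot}. One minor caveat: your parenthetical suggestion that for \(k=2\) one could ``simply note'' that \(\pi^2/4\) is the \(k\to2\) limit of the \(k>2\) expression is not by itself a proof (continuity of \(\lambda_k\) in \(k\) would need separate justification), but since you give the direct computation this does not affect the argument.
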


\begin{proof}
    When \(k=2\), applying \eqref{eq:doub:exp:2} and changing to the polar system by letting 
    \(s=(r \cot(\theta))^{2} \) and \(t= (r \sin(\theta))^{2}\), we get
    \begin{align}
        \lambda_{2} 
        =
        \int_{0}^{1}
        \int_{0}^{1-s}
        \frac{\arctan\left(\sqrt{\frac{t}{s}}\right)}{\sqrt{st}}
        =
        \int_{0}^{\frac{\pi}{2}}
        \int_{0}^{1}
        4 r \theta
        \, \mathrm{d}r
        \, \mathrm{d}\theta
        = 
        \frac{\pi^2}{4}
        .
        \label{eq:int:hyper:2}
    \end{align}

    For \(k \ge 3\), by \autoref{lem:int:double:exp}, it suffices to show that
    \begin{align}
        \int_{0}^{1}
            s^{-\frac{2}{k}}
        \int_{0}^{1-s}
            \HGFunc
            \left( \frac{2}{k}, \frac{1}{k}; 1+\frac{1}{k}; -\frac{t}{s} \right)
        \, \mathrm{d}t
        \, \mathrm{d}s
        =
        \frac
        { k \pi \cot\left( \frac{\pi}{k} \right)}
        {2(k-2)(k-1)},
        \label{eq:int:hyper:3}
    \end{align}
    which is easily verifiable using Mathematica. 
    A human proof can be derived using the series expansion of hypergeometric function 
    \cite[15.6.1]{NIST}.
\end{proof}

\begin{remark}
    \label{rm:hyper:cot}
    In an attempt to prove \autoref{lem:int:hyper}, we discovered the following identity
    \begin{align}
        \int_0^{\infty } (w+1)^{\frac{2}{k}-2} \, F\left(\frac{2}{k},\frac{1}{k};1+\frac{1}{k};-w\right) 
        \, 
        \mathrm{d}w
        =
        \frac{\pi  \cot \left(\frac{\pi }{k}\right)}{k-2}
        ,
        &
        &
        \left( k \ge 3 \right)
        ,
        \label{eq:hyper:cot}
    \end{align}
    which we have not found in the literature.
    The proof follows from changing to polar system in the left-hand-side of \eqref{eq:int:hyper:3}
    by letting \(s = (r \cos(\theta))^{k}\) and \(t = (r \sin(\theta))^{k} \).
\end{remark}

\printbibliography{}

\end{document}